\newtheorem{theorem}{Theorem}[section]
\newtheorem{lemma}[theorem]{Lemma}
\newtheorem{proposition}[theorem]{Proposition}
\newtheorem{corollary}[theorem]{Corollary}
\theoremstyle{definition}
\newtheorem{definition}[theorem]{Definition}
\newtheorem{remark}[theorem]{Remark}
\numberwithin{equation}{section}
\newcommand{\N}{\mathbb{N}}
\newcommand{\Z}{\mathbb{Z}}
\newcommand{\R}{\mathbb{R}}
\newcommand{\C}{\mathbb{C}}
\renewcommand{\P}{\mathbb{P}}
\newcommand{\U}{\mathbb{U}}
\newcommand{\bfe}{\mathbf{e}}
\DeclareMathOperator{\e}{\mathrm{e}}
\DeclareMathOperator{\G}{\mathrm{G}}
\DeclareMathOperator{\T}{\mathrm{T}}
\DeclareMathOperator{\ind}{\mathbf{1}}
\newcommand{\floor}[1]{\left\lfloor #1 \right\rfloor}
\newcommand{\ceil}[1]{\left\lceil #1 \right\rceil}
\newcommand{\abs}[1]{\left| #1 \right|}
\newcommand{\norm}[1]{\left\| #1 \right\|}
\newcommand{\norminf}[1]{\left\| #1 \right\|_\infty}
\newcommand{\dv}{\, \mid \,}
\newcommand{\notdv}{\, \nmid \,}
\newcommand{\rb}[1]{\left( #1 \right)}
\begin{document}
\selectlanguage{english}

\title[The Rudin-Shapiro sequence et al. are normal along squares]{The Rudin-Shapiro sequence and similar sequences are normal along squares}
\author{Clemens M\"ullner}
\address{Institut f\"ur Diskrete Mathematik und Geometrie TU Wien\\
Wiedner Hauptstr. 8-10, 1040 Wien, Austria}
\email{clemens.muellner@tuwien.ac.at}

\subjclass[2010]{Primary: 11A63, 11L03, 11B85; Secondary: 11N60, 60F05}
\thanks{This research is supported by the project F55-02 of the Austrian Science Fund FWF which is part of the Special Research Program ``Quasi-Monte Carlo Methods: Theory and Applications'',
by Project F5002-N15 (FWF), which is a part of the Special Research Program “Algorithmic and Enumerative Combinatorics” and by Project I1751 (FWF), called MUDERA
(Multiplicativity, Determinism, and Randomness).}

\begin{abstract}
  We prove that digital sequences modulo $m$ along squares are normal,
  which covers some prominent sequences like the sum of digits in base $q$ modulo $m$, the Rudin-Shapiro sequence and some generalizations.
  This gives, for any base, a class of explicit normal numbers that can be efficiently generated.
\end{abstract}

\maketitle

\section{Introduction}
This paper deals with digital sequences modulo $m$.
Such sequences are ``simple'' in the sense that they are deterministic and uniformly recurrent sequences.
We show that the situation changes completely when we consider the subsequence along squares, i.e., we show that this subsequence is normal.\\
Thus, we describe a new class of normal numbers that can be efficiently generated, i.e., the first $n$ digits of the normal number can be generated
by using $O(n \log(n))$ elementary operations.

In this paper we let $\N$ denote the set of positive integers and we let $\mathbb{P}$ denote the set of prime numbers.
We let $\U$ denote the set of complex numbers of modulus $1$ and we use
the abbreviation $\e(x) = $ exp$(2\pi i x)$ for any real number $x$.\\
For two functions, $f$ and $g$ that take only strictly positive real values we write $f = O(g)$ or $f\ll g$ if $f/g$ is bounded.\\
We let $\floor{x}$ denote the floor function and $\{x\}$ denote the fractional part of $x$.
Furthermore, we let $\chi_{\alpha}(x)$ denote the indicator function for $\{x\}$ in $[0,\alpha)$.\\
Moreover we let $\tau(n)$ denote the number of divisors of $n$, $\omega(n)$ denote the number of distinct prime factors of $n$ and 
$\varphi(n)$ denote the number of positive integers smaller than $n$ that are co-prime to $n$.\\
Furthermore, let $\varepsilon_j^{(q)}(n) \in \{0,\ldots,q-1\}$ denote the $j$-th digit in the base $q$ expansion of a non-negative integer $n$, i.e.,
$n = \sum_{j=0}^{r} \varepsilon_j^{(q)}(n) q^j$, where $r = \floor{\log_q(n)}$.
We usually omit the superscript, as we work with arbitrary but fixed base $q \geq 2$.

\subsection{Digital Sequences}
The main topic of this paper are digital sequences modulo $m'$.
We use a slightly different definition of digital functions than the one found in~\cite{AlloucheShallit}.
\begin{definition}
  We call a function $b: \N \to \N$ a \emph{strongly block-additive $q$-ary function} or \emph{digital function} if there exist
  $m\in \N_{>0}$ and $F: \{0,\ldots,q-1\}^{m} \to \N$ such that $F(0,\ldots,0) = 0$ and
	\begin{align*}
		b(n) = \sum_{j\in\Z}	F(\varepsilon_{j+m-1}^{(q)}(n),\ldots,\varepsilon_{j}^{(q)}(n)),
	\end{align*}
\end{definition}
where we define $\varepsilon_{-j}(n) = 0$ for all $j\geq1$.
The difference to the usual definition is the range of the sum ($\N_0$ or $\Z$) which does not matter for all appearing examples.

\begin{remark}
  The name strongly block-additive $q$-ary function was inspired by (strongly) $q$-additive functions.
  Bellman and Shapiro~\cite{bellmanShapiro} and Gelfond~\cite{gelfond1968} denoted a function $f$ to be $q$-additive if
  \begin{align*}
    f(a q^r + b) = f(a q^r) + f(b)
  \end{align*}
  holds for all $r \geq 1$, $1\leq a < q$ and $0\leq b < q^r$.
  Mend\`es France~\cite{france} denoted a function $f$ to be strongly $q$-additive if
  \begin{align*}
    f(a q^r + b) = f(a) + f(b)
  \end{align*}
  holds for all $r \geq 1$, $1 \leq a < q$ and $0\leq b < q^r$.
  Thus, we can write for a strongly $q$-additive function $f$,
  \begin{align*}
    f(n) = \sum_{j\in \Z} f(\varepsilon_j^{(q)}(n)).
  \end{align*}
\end{remark}

A quite prominent example of a strongly block-additive function is the sum of digits function $s_q(n)$ in base $q$.
This is a strongly block-additive function with $m=1$ and $F(x) = x$.
In particular, $(s_2(n) \bmod 2)_{n\in\N}$ gives the well-known Thue--Morse sequence.\\
Another prominent example is the Rudin-Shapiro sequence $\mathbf{r} = (r_n)_{n\geq 0}$ which is given by the parity of the blocks of the form ``$11$'' in the digital expansion in base $2$.
Let $b$ be the digital sequence corresponding to
$q=2,m=2$ and $F(x,y) = x\cdot y$, then we find $r_n = (b(n) \bmod 2)$.
This can be generalized to functions that are given by the parity of blocks of the form ``$111\ldots11$'' for fixed length of the block; 
these functions have for example been mentioned and studied in~\cite{mauduit_rivat_rs}.

Digital sequences are regular sequences (see for example~\cite{Cateland}). 
Consequently we find that digital sequences modulo $m'$ are automatic sequences (see~\cite[Corollary 16.1.6]{AlloucheShallit})
which implies some interesting properties.
For a detailed treatment of automatic sequences see~\cite{AlloucheShallit}.

We define the subword complexity of a sequence $\mathbf{a}$, that takes only finitely many different values, as
  \begin{align*}
    p_{\mathbf{a}}(n) = \# \{(a_i,\ldots,a_{i+n-1}): i\geq0\}.
  \end{align*}
  It is well known that the subword complexity of automatic sequences is sub-linear (see \cite[Corollary 10.3.2]{AlloucheShallit}), i.e. for every automatic sequence $\mathbf{a}$ we have
  \begin{align*}
    p_{\mathbf{a}}(n) = O(n).
  \end{align*}
For a random sequence $\mathbf{u} \in \{0,1\}^{\N}$ one finds that $p_{\mathbf{u}}(n) = 2^{n}$ with probability one.
Thus, automatic sequences are far from being random.

\subsection{Main Result}
It is well known that these properties are preserved when considering arithmetic subsequences of automatic sequences and, therefore, digital sequences modulo $m'$.
However, the situation changes completely when one considers the subsequence along squares.
\begin{definition}
  A sequence $\mathbf{u} \in \{0,\ldots,m'-1\}^{\N}$ is normal if, for any $k \in \N$ and any $(c_0,\ldots,c_{k-1}) \in \{0,\ldots,m'-1\}^k$, we have
  \begin{align*}
    \lim_{N\to \infty} \frac{1}{N} \#\{i<N: u(i) = c_0,\ldots,u(i+k-1) = c_{k-1}\} = (m')^{-k}.
  \end{align*}
\end{definition}
Drmota, Mauduit and Rivat showed a first example for that phenomenon~\cite{drmotaMauduitRivat2014}. 
They considered the classical Thue--Morse sequence $(t_n)_{n\geq0}$ and showed that not only $p_{(t_{n^2})_{n\geq0}}(k) = 2^k$, 
but were able to show that $(t_{n^2})_{n\geq 0}$ is normal.
The fact that $p_{(t_{n^2})_{n\geq 0}}(k) = 2^k$ had already been proven by Moshe~\cite{moshe}, who was able to give exponentially growing lower bounds for extractions of the
Thue--Morse sequence along polynomials of degree at least $2$.
In this paper we go one step further than Drmota, Mauduit and Rivat and show a similar result for general digital sequences.

\begin{theorem}\label{maintheorem}
	Let $b$ be a digital function and $m'\in \N$ with $\gcd(q-1,m') = 1$ and\\ 
	$\gcd(m',\gcd(\{b(n): n\in \N\})) = 1$. 
	Then $(b(n^2)\bmod m')_{n\in \N}$ is normal.
\end{theorem}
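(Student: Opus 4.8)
My plan is to follow the Fourier-analytic method of Mauduit and Rivat for the sum of digits of squares, as extended by Drmota, Mauduit and Rivat to the Thue--Morse sequence along squares~\cite{drmotaMauduitRivat2014}, and to carry it out for an arbitrary digital function $b$. First I would pass from normality to exponential sums: expanding the indicator functions in the definition of normality into additive characters modulo $m'$, the claim is equivalent to
\[
\frac1N\sum_{n<N}\e\Bigl(\tfrac1{m'}\sum_{j=0}^{k-1}h_j\,b\bigl((n+j)^2\bigr)\Bigr)\longrightarrow 0\qquad(N\to\infty)
\]
for every $k\ge 1$ and every $(h_0,\dots,h_{k-1})\in\{0,\dots,m'-1\}^{k}$ with not all $h_j\equiv 0\pmod{m'}$. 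The two gcd hypotheses are exactly what is needed here: if $d:=\gcd(q-1,m')>1$, then taking $b=s_q$ one has $s_q(n^2)\equiv n^2\pmod d$, which is periodic in $n$; and if $e:=\gcd\bigl(m',\gcd\{b(n):n\in\N\}\bigr)>1$, then $b(n)\equiv 0\pmod e$ for all $n$, so $b(n^2)\bmod m'$ omits residues. In either case the limit fails for a suitable character, and the content of the theorem is that, conversely, these conditions suffice. I would aim for $o(N)$, in fact a power saving $O(N^{1-\delta})$.

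Next I would introduce a digital truncation. Fix a level $\mu$, ultimately comparable to $\log_q N$, and use strong block-additivity to write $b(x)=b_{<\mu}(x)+R_\mu(x)+b_{\ge}(x)$, where $b_{<\mu}(x)$ depends only on $x\bmod q^{\mu}$ (so that $n\mapsto b_{<\mu}((n+j)^2)$ is periodic of period $q^{\mu}$, since $2q^\mu(n+j)+q^{2\mu}\equiv 0\pmod{q^\mu}$), $R_\mu$ is bounded and involves only $O(m)$ digits near position $\mu$, and $b_{\ge}(x)=b(\floor{x/q^{\mu}})$ is the top part. A carry-propagation lemma for digital functions — controlling, for all but few $n$, both the influence of the low-order digits on $b_{\ge}((n+j)^2)$ and the effect of a shift $n\mapsto n+r$ — should let me discard $R_\mu$ and replace $b_{\ge}((n+j)^2)$ by a quantity that is essentially constant on short subintervals. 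After these reductions the problem becomes to estimate, for suitable linear phases $\beta$ and intervals $I$ of length $\asymp N$,
\[
\sum_{n\in I}\e\Bigl(\tfrac1{m'}\sum_{j=0}^{k-1}h_j\,b_{<\mu}\bigl((n+j)^2\bigr)+\beta n\Bigr).
\]

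For this I would linearise the square by differencing. For fixed $\mu$ the function $x\mapsto\e(\tfrac1{m'}b_{<\mu}(x))$ has a transfer-matrix (automaton) description in terms of the digits of $x$, built from $F$ twisted by $\e(\cdot/m')$ on sliding windows of $m$ digits, so I would expand the sum via this representation (the shifts $j$ being absorbed into a vector-valued version) and then apply van der Corput's inequality in the manner of Mauduit--Rivat, iterating it once more if needed. The decisive gain is that differencing in $n$ turns the quadratic arguments into linear ones, since $(n+r+j)^2-(n+j)^2=2rn+r^2+2rj$; the resulting inner sums are then Gauss-type sums $\sum_{n}\e(\,\cdot\,/q^{\nu})$ twisted by products of the transfer matrices along the digits of $2rn+\dots$, and these exhibit square-root cancellation. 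Summing over $r$ and optimising $\mu$ would give the bound.

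The hard part will be the spectral input, which is also where the two gcd hypotheses really get used: one has to show that the transfer matrices attached to $\e(\tfrac1{m'}b_{<\mu})$, and to the correlated product $\prod_{j}\e(\tfrac{h_j}{m'}b_{<\mu}((\,\cdot\,+j)^2))$, have after normalisation spectral radius strictly below $1$ — equivalently, that the digital exponential sums $\sum_{x<q^{\nu}}\e(\tfrac{\alpha}{m'}b(x))$ and their shifted and correlated analogues genuinely cancel instead of being of full size. This is precisely the non-degeneracy the hypotheses encode: $\gcd(q-1,m')=1$ stops $b\bmod m'$ from being collinear with $n\bmod(q-1)$ (the obstruction exhibited above), and $\gcd(m',\gcd\{b(n)\})=1$ stops it from being supported on a proper subgroup of $\Z/m'\Z$. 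Proving this uniform spectral gap for the shifted and correlated matrices, and carrying the carry bookkeeping cleanly through the van der Corput step, is where I expect essentially all the difficulty to lie; the rest is the now-standard Mauduit--Rivat machinery.
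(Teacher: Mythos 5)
Your roadmap is the right one and matches the paper's strategy almost step for step: reduction to the exponential sums $\sum_{n<N}\e\bigl(\sum_\ell \alpha_\ell b((n+\ell)^2)\bigr)$ via characters mod $m'$, truncation $b_\lambda$ and carry-propagation lemmas, van der Corput differencing to linearise the squares, Vaaler detection of the relevant digits, Gauss-sum bounds for the quadratic frequencies, and a transfer-matrix/Fourier analysis of the truncated digital function. Your diagnosis of where the gcd hypotheses must enter is also correct in spirit. But the part you explicitly defer --- ``proving this uniform spectral gap for the shifted and correlated matrices'' --- is not ``standard machinery''; it is the entire new content of the paper, and as stated your proposal contains no argument for it. The paper's mechanism is concrete: from $\gcd(m',q-1)=1$ and $\gcd(m',\gcd\{b(n)\})=1$ one proves (Lemma 2.8 / Corollary 2.9) that there exist $\bfe_1,\bfe_2<q^{2m-1}$ for which the quantity $b(q^{m-1}(\bfe+1)-1)-b(q^{m-1}(\bfe+1))$ takes two values whose difference $d$ satisfies $\alpha d\notin\Z$; this single non-degeneracy statement is what produces two paths (respectively two pairs of neighbouring $\varepsilon$'s) in the matrix product whose unimodular weights fail to align, giving the row-sum saving $\eta>0$. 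The proof of that lemma is a genuine argument (assuming constancy of the difference forces $b(n)\equiv n\cdot c'\pmod p$ and then $p\mid b(n)$ for all $n$, using $\gcd(p,q-1)=1$ to divide by $q-1$), not a routine verification, and your heuristic gloss (``not collinear with $n\bmod(q-1)$'', ``not supported on a proper subgroup'') does not substitute for it.

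A second structural point your sketch misses is the dichotomy $K=\alpha_0+\cdots+\alpha_{k-1}\in\Z$ versus $K\notin\Z$. When $K\in\Z$ no uniform bound on the Fourier terms $H^I_\lambda(h,d)$ is available and one can only prove an $L^2$-average bound over $d$ (Proposition 3.8), which suffices after Cauchy--Schwarz in the $K\in\Z$ branch of the final assembly; when $K\notin\Z$ one needs and proves a pointwise contraction (Proposition 3.9), and the final assembly requires an extra van der Corput differencing and a different bookkeeping of the error terms. Your phrase ``iterating it once more if needed'' gestures at this but does not identify that the choice is governed by whether $K\in\Z$, nor that two genuinely different Fourier estimates are required. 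So: right approach, correct identification of the difficulty, but the decisive lemmas --- the non-degeneracy of digital functions under the two gcd hypotheses and the two contraction propositions built on it --- are missing rather than merely deferred.
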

There are only few known explicit constructions of normal numbers in a given base (see for example \cite[Chapters 4 and 5]{normal}).
This result provides us with a whole class of normal sequences for any given base that can be generated efficiently, 
i.e. it takes $O(n \log n)$ elementary operations to produce the first $n$ elements.\\
The easiest construction for normal sequences is the Champernowne construction that is given by concatenating the base $b$ expansion of successive integers.
This gives for example for base $10$: $123456789101112131415\ldots$.
Using the first $n'$ integers takes $O(n' \log(n'))$ elementary operations and gives a sequence of length $\Theta(n' \log(n'))$.\\
Scheerer~\cite{scheerer_normal} analyzed the runtime of some algorithms that produce absolutely normal numbers, 
i.e. real numbers in $[0,1]$ whose expansion in base $b$ are normal for every base $b$.
Algorithms by Sierpinski~\cite{Sierpinski_normal} and Turing~\cite{Turing_normal} use double exponentially many operations 
and algorithms by Levin~\cite{Levin_normal} and Schmidt~\cite{Schmidt_normal} use exponentially many operations.
Moreover, Becher, Heiber and Slaman~\cite{normal_polynomial} gave an algorithm that takes just above $n^2$ operations to produce the first $n$ digits.

Digital sequences modulo $m'$ have interesting (dynamical) properties.
Firstly, they are primitive and, therefore, uniformly recurrent (\cite[Theorem 10.9.5]{AlloucheShallit})
, i.e., every block that occurs in the sequence at least once, occurs infinitely often with bounded gaps.

There is a natural way to associate a dynamical system - the symbolic dynamical system - to a sequence that takes only finitely many values.
\begin{definition}
  The symbolic dynamical system associated to a sequence $\mathbf{u} \in \{0,\ldots,m'-1\}^{\N}$ is the system $(X(\mathbf{u}),T)$, 
  where $T$ is the shift on $\{0,\ldots,m'-1\}^{\N}$ and $X(\mathbf{u})$ the closure 
  of the orbit of $\mathbf{u}$ under the action of $T$ for the product topology of $\{0,\ldots, m'-1\}^{\N}$.
\end{definition}

Some of the mentioned properties of automatic sequences also imply important properties for the associated symbolic dynamical system.

The fact that every digital sequence modulo $m'$, denoted by $\mathbf{u}$, is uniformly recurrent implies that the associated symbolic dynamical system is minimal;
i.e., the only closed $T$ invariant sets in $X(\mathbf{u})$ are $\emptyset$ and $X(\mathbf{u})$ - see for example~\cite{subst} or \cite{queffelec}.

Furthermore, the entropy of symbolic dynamical systems to a sequence $\mathbf{u}$, that takes only finitely many values, is equal to
\begin{align*}
  \lim_{n\to\infty} \frac{\log(p_{\mathbf{u}}(n))}{n},
\end{align*}
(see for example~\cite{dynamics} or \cite{seq_complex}).
Consequently, we know that the entropy of the symbolic dynamical system associated to a digital sequence modulo $m'$ equals $0$, 
and, therefore, the dynamical system is deterministic.

\subsection{Outline of the proof}

In order to prove our main result, we will work with exponential sums. 
We present here the main theorem on exponential sums 
and further show its connection to Theorem \ref{maintheorem}.

\begin{theorem}\label{Thexponentialsums}
  For any integer $k\ge 1$ and $(\alpha_0,\ldots, \alpha_{k-1}) \in \{\frac0{m'},\ldots,\frac{m'-1}{m'}\}^k$
  such that $(\alpha_0,\ldots,\alpha_{k-1}) \ne (0,\ldots, 0)$,  there exists $\eta > 0$ such that
\begin{align}\label{eqThexponentialsums}
  S_0  = \sum_{n<N} \e\rb{\sum_{\ell=0}^{k-1} \alpha_\ell b((n+\ell)^2) } \ll N^{1-\eta}.
\end{align}
\end{theorem}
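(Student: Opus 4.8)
The plan is to follow the strategy of Drmota--Mauduit--Rivat for the Thue--Morse case, but pushed through for an arbitrary digital function $b$ with the help of the van der Corput inequality, a truncation argument isolating a window of digits, and ultimately a careful analysis of the resulting exponential sums over a "carry propagation" structure. First I would reduce the problem from $b((n+\ell)^2)$ to a function depending on only boundedly many digits: since $b$ is strongly block-additive with window size $m$, only digits in a range of length $O(\log N)$ actually matter, and one splits $b$ into a low part, a middle part, and a high part. The high digits of $(n+\ell)^2$ (above position roughly $2\log_q N$) vanish, and the low digits can be handled by a separate, easier argument (or absorbed), so the heart of the matter is a "truncated" digital function $b_{\lambda,\mu}$ depending on digits between positions $\lambda$ and $\mu$, where the difference $\mu-\lambda$ is a slowly growing function of $N$.

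Next I would apply the van der Corput inequality several times (say $2$ or $3$ times, as in the Thue--Morse argument) to the sum $S_0$. Each application replaces $\sum_n \e(\Phi(n))$ by an average over a shift parameter $r$ of $\sum_n \e(\Phi(n+r) - \Phi(n))$, at the cost of a factor like $R^{1/2}$ and a shorter effective range. The key point of doing this repeatedly is that $(n+r)^2 - n^2 = 2rn + r^2$ is \emph{linear} in $n$, so after enough differencing the phase becomes, up to controllable error terms, linear in the main variable $n$; the nonlinearity of squaring has been "ironed out." This is where the structure of the digital function interacts delicately with the carries: adding a linear function $2rn$ to $n^2$ changes the digits of $(n+\ell)^2$ in a way that propagates carries, and one must show that for most $n$ these carries are confined to a short stretch of digit positions, so that $b$ of the perturbed number differs from $b$ of the original in a way that depends only on few digits. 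The standard tool here is a Gauss-sum / geometric-series estimate for linear phases $\sum_n \e(\beta n)$ combined with the "carry lemma" controlling how often the carry from adding $2rn$ reaches beyond a given position.

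After the differencing, one is left with sums of the form $\sum_n \e(\text{linear in } n + \text{short digital correction})$, which one estimates by decomposing according to the residue of $n$ modulo a suitable power of $q$ (to fix the relevant low digits) and then summing the geometric series in the remaining variable; the cancellation comes precisely from the hypotheses $\gcd(q-1,m')=1$ and $\gcd(m',\gcd\{b(n)\})=1$, which guarantee that the relevant character of the digital function is nontrivial on the block structure, so the linear phase $\e(\beta n)$ that appears has $\beta$ bounded away from $0 \bmod 1$ often enough. Optimizing the parameters ($R$, the truncation window $\mu-\lambda$, the number of van der Corput steps) against each other yields a saving of the shape $N^{1-\eta}$ for some explicit small $\eta>0$.

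The main obstacle I expect is the carry-propagation analysis for a general $F$: unlike the sum-of-digits function, where adding $1$ to a digit is transparent, a general strongly block-additive function sees overlapping windows of $m$ consecutive digits, so a single carry can affect up to $m$ evaluations of $F$ and the "digital correction" term is genuinely a function of a block rather than a single digit. Controlling this uniformly — showing that the bad set of $n$ (where carries propagate too far, or where the correction term fails to exhibit cancellation) has density $O(N^{-\delta})$ — and keeping all the error terms below the target $N^{1-\eta}$ simultaneously is the technical crux; everything else is a (lengthy but routine) bookkeeping of van der Corput steps and geometric sums.
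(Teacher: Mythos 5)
Your sketch correctly reproduces the outer shell of the argument (truncation of $b$ to a digit window via carry lemmas, van der Corput differencing to linearize $(n+r)^2-n^2$, detection of the relevant digits, and Gauss-sum estimates for the quadratic phases), but it is missing the actual engine of the proof, and the place where you locate the difficulty is not where it lies. After the differencing and the digit detection one is \emph{not} left with a linear phase $\e(\beta n)$ whose cancellation follows from $\beta$ being bounded away from $0 \bmod 1$. One is left with a product of an arithmetic factor (a linear or quadratic exponential sum in $n$, handled by geometric series and Gauss sums) and a purely digital factor, namely the discrete Fourier transform
\begin{displaymath}
H_\lambda^I(h,d) = \frac 1{q^{\lambda+m-1}} \sum_{0\le u < q^{\lambda+m-1}}
\e\rb{ \sum_{\ell=0}^{k-1} \alpha_\ell b_{\lambda}(u + \ell d + i_\ell) - hu q^{-\lambda-m+1} }
\end{displaymath}
of the truncated function. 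The entire difficulty of the theorem is to prove that these Fourier transforms decay like $q^{-\eta\lambda}$ --- on average over $d$ when $K=\alpha_0+\cdots+\alpha_{k-1}\in\Z$ (Proposition~\ref{Pro1}), and pointwise, uniformly in $h$ and $d$, when $K\notin\Z$ (Proposition~\ref{Pro2}). This is established not by any geometric-series argument but by a transfer-operator/matrix-product mechanism: the digit recursion for $b_\lambda$ turns the correlations $\Phi_{\lambda,\lambda'}^{I,I'}(h)$ into a product of substochastic matrices indexed by pairs $(I,I')\in\mathcal{I}_k^2$, and one must verify the two structural conditions of Lemma~\ref{le:matrixnorm} (a reachable ``large'' entry in every row, and a genuine loss $1-\eta$ on the distinguished row) to get exponential contraction. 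It is exactly here --- in constructing two paths of equal combinatorial type but with weights differing by $\e(\alpha_{n_0}d)$ with $d\alpha_{n_0}\notin\Z$ --- that the hypotheses $\gcd(q-1,m')=1$ and $\gcd(m',\gcd\{b(n)\})=1$ are used, via Lemma~\ref{le:complicated_b_not_const} and Corollary~\ref{co:diff_b}; your proposal invokes these hypotheses only in a hand-waving way for a linear phase that does not occur.

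Two further concrete gaps: first, the proof must split into the cases $K\in\Z$ and $K\notin\Z$, which require different van der Corput schemes (one differencing versus two) and different Fourier bounds ($L^2$-average versus uniform); your sketch has no analogue of this dichotomy, and without it the argument cannot close, since for $K\in\Z$ no uniform pointwise bound on $H_\lambda^I(h,d)$ is available (e.g.\ $H_\lambda^I(0,0)=1$ when all the $i_\ell$ coincide). Second, the carry-propagation analysis that you single out as the crux is in fact the routine part --- the carry lemmas carry over from the Thue--Morse case with only notational changes --- whereas the genuinely new work for a general block-additive $F$ is the combinatorial bookkeeping of the index vectors $I\in\mathcal{I}_k$ under the digit-shift maps $T^j_{\varepsilon,\delta}$ and the verification that a nontrivial weight difference can always be produced. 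As written, your proposal would not yield any power saving.
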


\begin{lemma}
Theorem~\ref{Thexponentialsums} implies Theorem \ref{maintheorem}.
\end{lemma}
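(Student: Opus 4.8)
The plan is to derive the normality statement of Theorem~\ref{maintheorem} from the exponential sum bound of Theorem~\ref{Thexponentialsums} by a standard Fourier-analytic argument, using Weyl-type equidistribution applied to the finite group $(\Z/m'\Z)^k$. First I would fix $k \in \N$ and a target block $(c_0,\ldots,c_{k-1}) \in \{0,\ldots,m'-1\}^k$, and introduce for each $n$ the vector $v(n) = (b(n^2) \bmod m', b((n+1)^2)\bmod m', \ldots, b((n+k-1)^2)\bmod m') \in (\Z/m'\Z)^k$. The quantity to be estimated is $\frac1N \#\{n < N : v(n) = (c_0,\ldots,c_{k-1})\}$, and the goal is to show it tends to $(m')^{-k}$. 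Writing the indicator of a single point of the finite abelian group $(\Z/m'\Z)^k$ as a linear combination of its characters, we have
\begin{align*}
  \ind\{v(n) = (c_0,\ldots,c_{k-1})\} = \frac{1}{(m')^k} \sum_{(j_0,\ldots,j_{k-1}) \in \{0,\ldots,m'-1\}^k} \e\rb{\sum_{\ell=0}^{k-1} \frac{j_\ell}{m'}\rb{b((n+\ell)^2) - c_\ell}}.
\end{align*}
Summing over $n < N$ and dividing by $N$, the term corresponding to $(j_0,\ldots,j_{k-1}) = (0,\ldots,0)$ contributes exactly $(m')^{-k}$, which is the expected main term.

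Next I would bound the contribution of all the remaining characters. For each nonzero tuple $(j_0,\ldots,j_{k-1})$, set $\alpha_\ell = j_\ell/m' \in \{0/m',\ldots,(m'-1)/m'\}$; since $(j_0,\ldots,j_{k-1}) \ne (0,\ldots,0)$ we also have $(\alpha_0,\ldots,\alpha_{k-1}) \ne (0,\ldots,0)$, so Theorem~\ref{Thexponentialsums} applies and yields an $\eta = \eta(j_0,\ldots,j_{k-1}) > 0$ with
\begin{align*}
  \abs{\sum_{n<N} \e\rb{\sum_{\ell=0}^{k-1}\alpha_\ell b((n+\ell)^2)}} \ll N^{1-\eta}.
\end{align*}
The constant phase $\e(-\sum_\ell \alpha_\ell c_\ell)$ has modulus $1$ and does not affect this. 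Taking $\eta_0 = \min \eta(j_0,\ldots,j_{k-1})$ over the finitely many nonzero tuples (there are $(m')^k - 1$ of them, so the minimum is a positive number), each of these contributions is $O(N^{-\eta_0})$ after dividing by $N$. Summing the finitely many error terms gives
\begin{align*}
  \frac1N \#\{n < N : v(n) = (c_0,\ldots,c_{k-1})\} = (m')^{-k} + O(N^{-\eta_0}),
\end{align*}
and letting $N \to \infty$ establishes the limit required by the definition of normality, for every $k$ and every block; hence $(b(n^2)\bmod m')_{n\in\N}$ is normal.

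There is one subtlety to check: the definition of normality counts occurrences starting at index $i$ with $i < N$, matching $u(i),\ldots,u(i+k-1)$; here $u(i) = b(i^2)\bmod m'$, so the window starting at $i$ involves $b((i+\ell)^2)$ for $\ell = 0,\ldots,k-1$, which is exactly the combination controlled by $S_0$ in~\eqref{eqThexponentialsums}. So no reindexing is needed and the match is direct. I do not expect any real obstacle in this lemma: the entire arithmetic content is packaged in Theorem~\ref{Thexponentialsums}, and what remains is the routine orthogonality-of-characters expansion together with the observation that a minimum of finitely many positive $\eta$'s is positive. The only place one must be slightly careful is to ensure the hypotheses of Theorem~\ref{maintheorem} (namely $\gcd(q-1,m')=1$ and $\gcd(m',\gcd\{b(n):n\in\N\})=1$) are precisely the hypotheses under which Theorem~\ref{Thexponentialsums} is being invoked — but since Theorem~\ref{Thexponentialsums} is stated with the same standing assumptions on $b$ and $m'$, this is immediate.
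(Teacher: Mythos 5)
Your proof is correct and follows essentially the same route as the paper: expand the indicator of the block via orthogonality of characters on $(\Z/m'\Z)^k$, extract the main term $(m')^{-k}$ from the trivial character, and bound the remaining $(m')^k-1$ terms by Theorem~\ref{Thexponentialsums}. Your explicit remark that one must take the minimum of the finitely many exponents $\eta$ is a small but worthwhile clarification that the paper glosses over.
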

\begin{proof}
Let $(c_0,\ldots,c_{k-1}) \in \{0,\ldots,m'-1\}^k$ be an arbitrary sequence of length $k$. 
We count the number of occurrences of this sequence in 
$(b(n^2)\bmod m')_{n\leq N}$. Assuming that (\ref{eqThexponentialsums}) holds, we obtain by using the well known identity  
$\sum_{n=0}^{m'-1} \e(\frac{n}{m'} \ell) = m'$ for $\ell \equiv 0 \bmod m'$ and $0$ otherwise
\begin{align*}
&\abs{ \{ n< N : (b(n^2)\bmod m', \ldots, b((n+k-1)^2)\bmod m') = (c_0,\ldots,c_{k-1})  \}} \\
&= \sum_{n< N} \ind_{[b(n^2) \equiv c_0\bmod m']} \cdots \ind_{[b((n+k-1)^2) \equiv c_{k-1}\bmod m']} \\
&= \sum_{n< N} \prod_{\ell = 0}^{k-1}\frac 1{m'} \sum_{\alpha_{\ell}' = 0}^{m'-1} 
\e\rb{ \frac{\alpha_{\ell}'}{m'} \rb{ b((n+\ell)^2) - c_{\ell} } }
\end{align*}
\begin{align*}
&= \frac 1{(m')^k} \sum_{\substack{ (\alpha_0',\ldots,\alpha_{k-1}')\\ \in \{0,\ldots,m'-1\}^k }} 
\e\rb{ -\frac {\alpha_0' c_0 + \cdots + \alpha_{k-1}'c_{k-1}}{m'}}
\sum_{n < N} \e\rb{ \sum_{\ell=0}^{k-1} \underbrace{\frac {\alpha_\ell'}{m'}}_{=:\alpha_{\ell}} b((n+\ell)^2) }\\
&= \frac{N}{(m')^k} + \mathcal{O}\rb{ N^{1-\eta} }
\end{align*}
with the same $\eta > 0$ as in Theorem \ref{Thexponentialsums}. \\
To obtain the last equality we separate the term with 
$(\alpha_0', \ldots, \alpha_{k-1}') = (0,\ldots,0)$.
\end{proof}
%It remains to show Theorem~\ref{Thexponentialsums}. 

The structure of the %full proof of Theorem~\ref{Thexponentialsums} is presented below.
rest of the paper is presented below.

In \Cref{sec:digital} we discuss some properties of digital sequences.
These properties will be very important for the estimates of the Fourier terms.

In Section \ref{cha:bounds}, we derive the main ingredients of the proof of Theorem~\ref{Thexponentialsums} which are upper bounds on the Fourier terms
\begin{displaymath}
H_\lambda^I(h,d) = \frac 1{q^\lambda} \sum_{0\le u < q^\lambda}
\e\rb{ \sum_{\ell=0}^{k-1} \alpha_\ell b_\lambda(u + \ell d + i_\ell) - h q^{-\lambda} },
\end{displaymath}
where $I = (i_0,\ldots, i_{k-1})\in\N^k$ with some special properties defined in \Cref{sec:Fourier_squares} and $b_{\lambda}$ is 
a truncated version of $b$ which is properly defined in \Cref{def:truncated_function}.

The main results of Section \ref{cha:bounds} are Propositions \ref{Pro1} and \ref{Pro2}. 
Proposition~\ref{Pro1} yields a bound on averages of Fourier transforms and 
Proposition~\ref{Pro2} yields a uniform bound on Fourier transforms.

In Section \ref{cha:proof}, we discuss how Proposition~\ref{Pro1} and Proposition~\ref{Pro2}
are used to prove Theorem \ref{Thexponentialsums}.
The approach is very similar to \cite{drmotaMauduitRivat2014} and we will mainly describe how it has to be adapted.
We use Van-der-Corput-like inequalities in order to reduce our problem to sums depending only on few digits of $n^2, (n+1)^2, \ldots, (n+k-1)^2$.
By detecting these few digits, we are able to remove the quadratic terms, which allows a proper
Fourier analytic treatment. After the Fourier analysis, the remaining sum is split into two sums. 
The first sum involves quadratic exponential sums which are dealt with using the results from \Cref{sec:gauss}.

The Fourier terms $H_\lambda^I(h,d)$ appear in the second sum and Propositions \ref{Pro1} and \ref{Pro2} provide the necessary bounds.
 
We have to distinguish the cases 
$K = \alpha_0 + \cdots + \alpha_{k-1} \in \Z$ and $K \notin \Z$.
Sections~\ref{sec:equiv0} and \ref{sec:nequiv0} tackle one of these cases each.
In Section~\ref{sec:equiv0}, we prove that~-- if $K \in \Z$~-- we deduce  
Theorem~\ref{Thexponentialsums} from  Proposition~\ref{Pro1}.
For $K \notin \Z$, Section~\ref{sec:nequiv0} shows that we can deduce  
Theorem~\ref{Thexponentialsums} from  Proposition~\ref{Pro2}.

In Section \ref{cha:auxiliary}, we present some auxiliary results also used in \cite{drmotaMauduitRivat2014}.

\section{Digital Functions}\label{sec:digital}
In this section we discuss some important properties of digital functions.
We start with some basic definitions.
\begin{definition}\label{def:truncated_function}
	We define for $0 \leq \mu \leq \lambda$ the truncated function $b_\lambda$ and the two-fold restricted function $b_{\mu,\lambda}$ by
  \begin{displaymath}
          b_\lambda(n) =  \sum_{j < \lambda} F(\varepsilon_{j+m-1}(n),\ldots,\varepsilon_{j}(n)) \text{ and } b_{\mu,\lambda}(n) = b_{\lambda}(n)-b_{\mu}(n).
  \end{displaymath}
\end{definition}
We see directly that $b_{\lambda}(.): \N\to\N$ is a $q^{\lambda+m-1}$ periodic function and we extend it to a ($q^{\lambda+m-1}$ periodic) function $\Z\to\N$ 
which we also denote by $b_{\lambda}(.): \Z\to\N$.

For any $n\in \N$, we define $F(n):=F(\varepsilon_{m-1}(n),\ldots,\varepsilon_{0}(n))$.

Since $F(0) = 0$, we can rewrite $b(n)$ and $b_{\lambda}(n)$ for $\lambda \geq 1$ as follows
\begin{align*}
  b(n) &= \sum_{j \geq 0} F\rb{\floor{\frac{q^{m-1}n}{q^j}}}\\
  b_{\lambda}(n) &= \sum_{j=0}^{\lambda+m-2} F\rb{\floor{\frac{q^{m-1}n}{q^j}}}.
\end{align*}

We show that for any block-additive function, we can choose $F$ without loss of generality such that it fulfills a nice property.
\begin{lemma}
  Let $b: \N \to \N$ be a strongly block-additive function corresponding to $F'$.
  Then, there exists another function $F$ such that $b$ also corresponds to $F$ and
  \begin{align}
    \label{property_b'2} \sum_{j = 1}^{m-1} F(nq^j) = 0
  \end{align}
  holds for all $n\in \N$.
\end{lemma}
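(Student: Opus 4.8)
The plan is to replace $F'$ by a suitable \emph{coboundary modification}. Extending the convention $F'(n):=F'(\varepsilon_{m-1}(n),\dots,\varepsilon_0(n))$ to the given function, I would define a correction function $g$ on words of length $m-1$ by
\[
  g(w_{m-2},\dots,w_0):=-\sum_{j=1}^{m-1} F'\!\bigl(W q^{j}\bigr),\qquad W:=\sum_{i=0}^{m-2} w_i q^{i},
\]
so that $g(0,\dots,0)=0$ since $F'(0,\dots,0)=0$, and then set
\[
  F(x_{m-1},\dots,x_0):=F'(x_{m-1},\dots,x_0)+g(x_{m-1},\dots,x_1)-g(x_{m-2},\dots,x_0),
\]
which again satisfies $F(0,\dots,0)=0$. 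It then remains to verify (i) that $b$ corresponds to $F$, and (ii) that $F$ satisfies \eqref{property_b'2}.

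For (i), substituting the definition of $F$ into the defining sum of $b$ gives
\[
  \sum_{j\in\Z} F(\varepsilon_{j+m-1}(n),\dots,\varepsilon_j(n))
  = b(n)+\sum_{j\in\Z}\Bigl(g(\varepsilon_{j+m-1}(n),\dots,\varepsilon_{j+1}(n))-g(\varepsilon_{j+m-2}(n),\dots,\varepsilon_j(n))\Bigr).
\]
Because $n$ has only finitely many non-zero digits and $g$ vanishes on the all-zero word, each of the two $g$-sums has only finitely many non-zero terms; after the index shift $j\mapsto j-1$ they coincide, hence cancel, and $b$ corresponds to $F$.

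For (ii), one uses that multiplication by $q^{j}$ shifts the digits: $\varepsilon_i(nq^{j})=\varepsilon_{i-j}(n)$ (with $\varepsilon_{<0}\equiv 0$), so for $j\ge 1$ the bottom length-$m$ window of $nq^{j}$ is $(\varepsilon_{m-1-j}(n),\dots,\varepsilon_0(n),0,\dots,0)$ with $j$ trailing zeros, and likewise for the two length-$(m-1)$ windows entering the coboundary term. Writing $\sum_{j=1}^{m-1}F(nq^{j})$ out, the $F'$-part is $\sum_{j=1}^{m-1}F'(nq^{j})$, while the coboundary contributions telescope: the $-g$-term with parameter $j$ cancels the $+g$-term with parameter $j+1$, and the $-g$-term with parameter $j=m-1$ is $g(0,\dots,0)=0$, so only the $+g$-term with parameter $j=1$ survives, namely $g(\varepsilon_{m-2}(n),\dots,\varepsilon_0(n))$. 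By the definition of $g$ (and since the bottom $m$ digits of $W q^{j}$ and of $nq^{j}$ agree for $j\ge1$) this equals $-\sum_{j=1}^{m-1}F'(nq^{j})$, which cancels the $F'$-part; hence $\sum_{j=1}^{m-1}F(nq^{j})=0$, which is \eqref{property_b'2}.

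The only real work is the index bookkeeping in the two telescoping computations — the bi-infinite one for well-definedness in (i) and the finite one with the trailing-zero windows in (ii) — and in particular correctly pinning down which single boundary term survives in (ii). One small caveat: the modified $F$ may now take negative integer values; this is harmless, because $b$ itself is unchanged and throughout the rest of the paper $b$ and its truncations occur only reduced modulo $m'$ or inside $\e(\cdot)$, so one may freely allow $F\colon\{0,\dots,q-1\}^{m}\to\Z$ from here on.
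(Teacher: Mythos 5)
Your proof is correct and is essentially the paper's own argument: your $g$ is exactly $-G$ with $G(n)=\sum_{j=1}^{m-1}F'(nq^j)$ (which depends only on $n\bmod q^{m-1}$), so your $F=F'+g(\lfloor n/q\rfloor)-g(n\bmod q^{m-1})$ coincides with the paper's $F=F'+G(n)-G(\lfloor n/q\rfloor)$, and both verifications are the same telescoping computations. Your closing remark that the new $F$ may take values in $\Z$ rather than $\N$ is a fair observation that the paper leaves implicit, and your justification for why this is harmless is sound.
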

\begin{proof}
  We start by defining a new function
  \begin{align*}
    G(n) := \sum_{j=1}^{m-1} F'(n q^j).
  \end{align*}
  This already allows us to define the function $F$:
  \begin{align*}
    F(n) := F'(n) + G(n) - G(\floor{n/q}).
  \end{align*}

  We find directly that $G(0) = F(0) = 0$. 
  It remains to show that $b$ corresponds to $F$ and that \eqref{property_b'2} holds, which are simple computations,
  \begin{align*}
    \sum_{j \geq 0} F\rb{\floor{\frac{q^{m-1}n}{q^j}}}&= \sum_{j \geq 0} F'\rb{\floor{\frac{q^{m-1}n}{q^j}}} \\
      &\qquad + \sum_{j \geq 0} G\rb{\floor{\frac{q^{m-1}n}{q^j}}} - \sum_{j \geq 0} G\rb{\floor{\frac{q^{m-1}n}{q^{j+1}}}}\\
      &= b(n) + G(0) = b(n).
  \end{align*}
  Furthermore, we find
  \begin{align*}
    \sum_{j = 1}^{m-1} F(nq^j) &= \sum_{j = 1}^{m-1} F'(nq^j) + \sum_{j = 1}^{m-1} G(nq^j) - \sum_{j = 1}^{m-1} G(nq^{j-1})\\
      &= \sum_{j = 1}^{m-1} F'(nq^j) + G(n q^{m-1}) - G(n)\\
      &= \sum_{j = 1}^{m-1} F'(nq^j) + 0 - \sum_{j=1}^{m-1} F'(n q^j) = 0.
  \end{align*}
\end{proof}
  
We assume from now on that for any strongly block-additive function $b$ \eqref{property_b'2} holds.
This allows us to find an easier expression for $b$:
\begin{corollary}
Let $b(n)$ be a digital function fulfilling \eqref{property_b'2}. 
Then 
\begin{align*}
  b(n) = \sum_{j \geq 0} F\rb{\floor{\frac{n}{q^j}}}
\end{align*}
and
\begin{align*}
  b_{\lambda}(n) = \sum_{j = 0}^{\lambda-1} F\rb{\floor{\frac{n}{q^j}}}
\end{align*}
holds for all $n, \lambda \in \N$.
\end{corollary}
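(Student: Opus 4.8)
The plan is to start from the two identities recorded immediately before the statement, namely
\[
  b(n) = \sum_{j \geq 0} F\rb{\floor{\frac{q^{m-1}n}{q^j}}}, \qquad
  b_{\lambda}(n) = \sum_{j=0}^{\lambda+m-2} F\rb{\floor{\frac{q^{m-1}n}{q^j}}},
\]
which were obtained using only $F(0) = 0$, and to extract from each sum the portion that already has the desired shape by shifting the summation index by $m-1$. Concretely, for $j \geq m-1$ one has $\floor{q^{m-1}n/q^{j}} = \floor{n/q^{j-(m-1)}}$, so after the substitution $i = j-(m-1)$ the tail $\sum_{j \geq m-1}$ turns into exactly $\sum_{i\geq 0} F(\floor{n/q^i})$. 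It then only remains to dispose of the finitely many leftover terms with $0 \le j \le m-2$.

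For those terms $\floor{q^{m-1}n/q^{j}} = q^{m-1-j}n$ is an integer, and writing $i = m-1-j$ (so that $i$ runs over $1,\dots,m-1$) gives $\sum_{j=0}^{m-2} F\rb{\floor{q^{m-1}n/q^{j}}} = \sum_{i=1}^{m-1} F(q^{i}n)$, which vanishes by the normalization \eqref{property_b'2}. Hence $b(n) = \sum_{i\geq 0} F(\floor{n/q^i})$, as claimed; note that for $m=1$ there are no leftover terms and \eqref{property_b'2} is an empty sum, so the identity is immediate. The statement for $b_\lambda$ is the same bookkeeping: since $\lambda \in \N$ we have $\lambda+m-2 \geq m-1$, so splitting the range $0 \le j \le \lambda+m-2$ at $j = m-1$ yields the leftover block $0 \le j \le m-2$, handled exactly as above via \eqref{property_b'2}, together with the main block $m-1 \le j \le \lambda+m-2$, which under $i = j-(m-1)$ becomes $\sum_{i=0}^{\lambda-1} F(\floor{n/q^i})$.

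I do not expect any genuine obstacle here: the content is entirely a reindexing argument, and the only points requiring care are getting the index shift and the range of the leftover block right and observing that hypothesis \eqref{property_b'2} is precisely what annihilates that leftover block.
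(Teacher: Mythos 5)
Your proof is correct and follows exactly the route the paper intends (the paper omits the proof of this corollary, but it is precisely this split of the sum at $j=m-1$, the reindexing $i=j-(m-1)$, and the annihilation of the leftover block $\sum_{i=1}^{m-1}F(nq^i)$ by \eqref{property_b'2}). The index bookkeeping, the range of the leftover block, and the $m=1$ edge case are all handled correctly.
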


We easily find the following recursion.
\begin{lemma}\label{le:rec_b}
Let $\alpha \in \N, n_1\in \N$ and $0\leq n_2 < q^{\alpha}$. Then
\begin{align}\label{eq:recursion_b_lambda}
	b_{\lambda}(n_1 q^{\alpha}+n_2) = b_{\lambda-\alpha}(n_1) + b_{\alpha}(n_1 q^{\alpha} + n_2)
\end{align}
holds for all $\lambda >\alpha$ and
\begin{align}\label{eq:recursion_b}
  b(n_1 q^{\alpha}+n_2) = b(n_1) + b_{\alpha}(n_1 q^{\alpha} + n_2).
\end{align}
\end{lemma}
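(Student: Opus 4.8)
The plan is to reduce both identities to a single fact about floor functions, using the closed form for $b$ and $b_\lambda$ furnished by the Corollary just above, namely $b(n) = \sum_{j\ge 0} F\rb{\floor{n/q^j}}$ and $b_\lambda(n) = \sum_{j=0}^{\lambda-1} F\rb{\floor{n/q^j}}$ (recall we have assumed \eqref{property_b'2}).

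First I would establish the digit-truncation identity: if $n = n_1 q^\alpha + n_2$ with $0\le n_2 < q^\alpha$, then
\begin{align*}
  \floor{\frac{n}{q^j}} = \floor{\frac{n_1}{q^{j-\alpha}}} \qquad \text{for all } j \ge \alpha.
\end{align*}
To prove this, write $n_1 = q^{j-\alpha}\floor{n_1/q^{j-\alpha}} + r$ with $0\le r < q^{j-\alpha}$, so that $n = q^{j}\floor{n_1/q^{j-\alpha}} + \rb{r q^\alpha + n_2}$. The hypotheses $r \le q^{j-\alpha}-1$ and $n_2 \le q^\alpha - 1$ give $r q^\alpha + n_2 \le q^{j} - 1 < q^j$, and dividing by $q^j$ and taking the floor yields the claim. (This is just the statement that, above index $\alpha$, the base-$q$ digits of $n$ agree with those of $n_1$ shifted down by $\alpha$.)

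Next I would split the defining sum at $j=\alpha$. For $\lambda > \alpha$,
\begin{align*}
  b_\lambda(n_1 q^\alpha + n_2)
  &= \sum_{j=0}^{\alpha-1} F\rb{\floor{\frac{n}{q^j}}} + \sum_{j=\alpha}^{\lambda-1} F\rb{\floor{\frac{n}{q^j}}}\\
  &= b_\alpha(n_1 q^\alpha + n_2) + \sum_{j=0}^{\lambda-\alpha-1} F\rb{\floor{\frac{n_1}{q^{j}}}}
   = b_\alpha(n_1 q^\alpha + n_2) + b_{\lambda-\alpha}(n_1),
\end{align*}
where the first piece is exactly the closed form for $b_\alpha(n_1 q^\alpha + n_2)$, and in the second piece I re-indexed $j \mapsto j-\alpha$ and applied the truncation identity. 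This is \eqref{eq:recursion_b_lambda}. For \eqref{eq:recursion_b} one runs the identical computation with the sum over all $j\ge 0$: splitting $\sum_{j\ge 0} = \sum_{j=0}^{\alpha-1} + \sum_{j\ge\alpha}$ and using the truncation identity on the tail gives $b(n_1 q^\alpha + n_2) = b_\alpha(n_1 q^\alpha + n_2) + \sum_{j\ge 0} F\rb{\floor{n_1/q^j}} = b_\alpha(n_1 q^\alpha + n_2) + b(n_1)$.

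I do not expect a genuine obstacle: the only point requiring care is the floor identity, and in particular the inequality $r q^\alpha + n_2 < q^j$, which is where the hypotheses $n_2 < q^\alpha$ and $j \ge \alpha$ (equivalently $\lambda > \alpha$) are used; everything else is bookkeeping with the closed forms from the Corollary.
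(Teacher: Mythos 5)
Your proof is correct and follows essentially the same route as the paper: both split the defining sum $\sum_{j} F\rb{\floor{(n_1 q^\alpha + n_2)/q^j}}$ at $j=\alpha$ and re-index the tail using $\floor{(n_1 q^\alpha+n_2)/q^j} = \floor{n_1/q^{j-\alpha}}$ for $j\ge\alpha$. The only difference is that you spell out the proof of this floor identity, which the paper leaves implicit.
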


\begin{proof}
We compute $b_{\lambda}(n_1q^{\alpha} + n_2)$
  \begin{align*}
    b_{\lambda}(n_1q^{\alpha}+n_2) &= \sum_{j=0}^{\lambda-1} F\rb{\floor{\frac{n_1 q^{\alpha} + n_2}{q^j}}}\\
    &= \sum_{j=\alpha}^{\lambda-1} F\rb{\floor{\frac{n_1 q^{\alpha} + n_2}{q^j}}} + \sum_{j=0}^{\alpha-1} F\rb{\floor{\frac{n_1 q^{\alpha} + n_2}{q^j}}}\\
    &= \sum_{j=0}^{\lambda-\alpha-1} F\rb{\floor{\frac{n_1}{q^j}}} + \sum_{j=0}^{\alpha-1} F\rb{\floor{\frac{n_1 q^{\alpha} + n_2}{q^j}}}\\
    &= b_{\lambda-\alpha}(n_1) + b_{\alpha}(n_1 q^{\alpha} + n_2).
  \end{align*}
  The second case can be treated analogously.
\end{proof}

As we are dealing with the distribution of digital functions along a special subsequence, we will start discussing some distributional result for digital functions.
\begin{lemma}\label{le:b_mod_m'}
  Let $b$ be a strongly block-additive function and $m'>1$.
  Then the following three statements are equivalent.
  \begin{enumerate}[$(i)$]
   \item $\exists n \in \N: m'\notdv  b(n)$
   \item $\exists n<q^m: m' \notdv  F(n)$
   \item $\exists n<q^m: m' \notdv  b(n)$
  \end{enumerate}
\end{lemma}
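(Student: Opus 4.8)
The plan is to prove the three equivalences by establishing the cycle of implications $(iii) \Rightarrow (i) \Rightarrow (ii) \Rightarrow (iii)$, using the structural facts about digital functions already derived, in particular the representation $b(n) = \sum_{j \geq 0} F(\floor{n/q^j})$ and the recursion from \Cref{le:rec_b}.

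\textbf{Step 1: $(iii) \Rightarrow (i)$.} This is trivial, since the $n < q^m$ witnessing $(iii)$ is itself a witness for $(i)$; no argument is needed.

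\textbf{Step 2: $(i) \Rightarrow (ii)$.} Suppose $m' \nmid F(n)$ fails for every $n < q^m$, i.e.\ $m' \dv F(n)$ for all $n < q^m$. I claim then $m' \dv b(n)$ for every $n$, contradicting $(i)$. The point is that for any $n$ and any $j$, the value $F(\floor{n/q^j})$ depends only on the residue of $\floor{n/q^j}$ modulo $q^m$ — indeed by definition $F(N) = F(\varepsilon_{m-1}(N),\ldots,\varepsilon_0(N))$, so $F(N) = F(N \bmod q^m)$ for every $N \in \N$. Hence if $m' \dv F(N)$ for all $N < q^m$, then $m' \dv F(N)$ for \emph{all} $N \in \N$, and therefore $m' \dv b(n) = \sum_{j \geq 0} F(\floor{n/q^j})$ for every $n$ (the sum being finite). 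This gives $(i) \Rightarrow (ii)$ by contraposition.

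\textbf{Step 3: $(ii) \Rightarrow (iii)$.} Let $n_0 < q^m$ be such that $m' \nmid F(n_0)$. I want to produce some $n < q^m$ with $m' \nmid b(n)$. The natural candidate is $n = n_0$ itself, but one must compare $b(n_0)$ with $F(n_0)$. Since $n_0 < q^m$, write $n_0 = \sum_{i=0}^{m-1} \varepsilon_i(n_0) q^i$; then for $j \geq 0$ we have $\floor{n_0/q^j} = \sum_{i=0}^{m-1-j} \varepsilon_{i+j}(n_0) q^i$ when $j < m$ and $\floor{n_0/q^j} = 0$ for $j \geq m$. Hence
\begin{align*}
  b(n_0) = \sum_{j=0}^{m-1} F\rb{\floor{\frac{n_0}{q^j}}} = F(n_0) + \sum_{j=1}^{m-1} F\rb{\floor{\frac{n_0}{q^j}}}.
\end{align*}
This is not immediately $F(n_0)$, so $n_0$ need not work directly. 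Instead, consider $n = n_0 q^{m-1}$, which still satisfies $n < q^{2m-2}$ but not necessarily $n < q^m$; this is where care is needed. The cleaner route: apply property \eqref{property_b'2}. For $n = n_0 q^{m-1}$ we have $\floor{n/q^j} = n_0 q^{m-1-j}$ for $0 \leq j \leq m-1$ and $\floor{n/q^j} = \floor{n_0/q^{j-m+1}}$ for $j \geq m-1$, so
\begin{align*}
  b(n_0 q^{m-1}) = \sum_{j=0}^{m-2} F(n_0 q^{m-1-j}) + \sum_{j \geq m-1} F\rb{\floor{\frac{n_0}{q^{j-m+1}}}} = \sum_{i=1}^{m-1} F(n_0 q^i) + b(n_0) = 0 + b(n_0),
\end{align*}
using \eqref{property_b'2}, so this does not help separate $F(n_0)$ either. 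The right idea is to induct on the number of nonzero digits of $n_0$, or equivalently to pick among all $n < q^m$ with $m' \nmid F(n)$ one of \emph{minimal} value: if $n_0$ is minimal, then $\floor{n_0/q^j} < n_0$ for $j \geq 1$ forces $m' \dv F(\floor{n_0/q^j})$, whence $b(n_0) \equiv F(n_0) \not\equiv 0 \pmod{m'}$ by the displayed formula above, and $n_0 < q^m$ is the desired witness. (One checks $\floor{n_0/q^j} < n_0$ holds for $j \geq 1$ whenever $n_0 \geq 1$, which is the case since $F(0) = 0$ forces $n_0 \geq 1$.)

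\textbf{Main obstacle.} The only genuinely delicate point is Step 3: one cannot simply take the witness $n_0$ from $(ii)$ and conclude $m' \nmid b(n_0)$, because $b(n_0)$ picks up the extra contributions $F(\floor{n_0/q^j})$ for $j \geq 1$ which could cancel $F(n_0) \bmod m'$. Passing to a witness of minimal value (or, equivalently, minimal digit-sum) removes these extra terms, and this is the crux of the argument; the rest is bookkeeping with the finite-sum representation of $b$.
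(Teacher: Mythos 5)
Your proof is correct and follows essentially the same route as the paper: the same cycle of implications, with the crux of $(ii)\Rightarrow(iii)$ being the identical minimal-witness argument (take the least $n_0$ with $m'\nmid F(n_0)$, so that all the terms $F(\floor{n_0/q^j})$, $j\geq 1$, vanish modulo $m'$). The only (harmless) variation is in $(i)\Rightarrow(ii)$, where you argue by contraposition using the $q^m$-periodicity of $F$, whereas the paper takes a minimal $n_0$ with $m'\nmid b(n_0)$ and applies the recursion $b(n_0)=b(\floor{n_0/q})+F(n_0)$.
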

\begin{proof}
  Obviously $(iii) \implies (i)$.
  Next we show $(i) \implies (ii)$:\\
  Let $n_0$ be the smallest natural number $>0$ such that $m' \notdv b(n_0)$.
  By Lemma~\ref{le:rec_b} holds
  \begin{align*}
    b(n_0) = b(\floor{n_0/q}) + F(n_0).
  \end{align*}
  By the definition of $n_0$, holds $m' \dv b(\floor{n_0/q})$ and, therefore, $m' \notdv F(n_0) = F( n_0 \bmod q^m)$.
  
  It remains to prove $(ii) \implies (iii)$:\\
  Let $n_0$ be the smallest natural number $>0$ such that $m' \notdv F(n_0)$.
  By $(ii)$, we have $n_0<q^m$. 
  We compute $b(n_0) \bmod m'$,
  \begin{align*}
    b(n_0) = \sum_{j\geq 0} F\rb{\floor{\frac{n_0}{q^j}}} \equiv F(n_0) \not \equiv 0 (\bmod m')
  \end{align*}
  as $\floor{\frac{n_0}{q^j}}<n_0$ for $j\geq 1$ implies that $F\rb{\floor{\frac{n_0}{q^j}}}\equiv 0 (\bmod m')$.
\end{proof}

\begin{remark}
  The following example shows that we can not replace $m' \notdv .$ by $\gcd(m',.) = 1$ in Lemma~\ref{le:b_mod_m'}:\\
  Let $m = 1, q=3,m' = 6$ and $F(0)=0, F(1) = 2, F(2) = 3$.
  We see that $\gcd(m',F(n)) >1$ for all $n <q^m=3$ and also $\gcd(m',b(n))>1$ for all $n<q^m=3$.
  However, $b(5) = F(1) + F(2) = 5$ and $\gcd(m',b(5)) = 1$.
\end{remark}

Next, we show a technical result concerning block-additive functions, which will be useful later on.
\begin{lemma}\label{le:complicated_b_not_const}
  Let $b$ be a strongly block-additive function in base $q$ and $k>1$ such that $\gcd(k,q-1) = 1$ and $\gcd(k, \gcd(\{b(n): n \in \N\})) = 1$.
  Then there exist integers $\bfe_1,\bfe_2<q^{2m-1}$ such that
  \begin{align}
	\begin{split}
	\label{eq:b_not_const}
    &b(q^{m-1}(\bfe_1 + 1)-1) - b(q^{m-1}(\bfe_1+1)) \\
		\not \equiv\quad & b(q^{m-1}(\bfe_2 + 1)-1) - b(q^{m-1}(\bfe_2+1)) (\bmod k)
	\end{split}
  \end{align}
  holds.
\end{lemma}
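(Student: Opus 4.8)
The plan is to argue by contraposition: I assume that the quantity
\[
  \Delta(\bfe) := b(q^{m-1}(\bfe+1)-1) - b(q^{m-1}(\bfe+1))
\]
is constant modulo $k$ as $\bfe$ ranges over $\{0,\ldots,q^{2m-1}-1\}$, say $\Delta(\bfe)\equiv c \pmod k$ for all such $\bfe$, and then I derive that $k$ must fail one of the two coprimality hypotheses. The point of looking at $q^{m-1}(\bfe+1)-1$ versus $q^{m-1}(\bfe+1)$ is that these two integers differ only in a controlled window of digits: writing $\bfe+1$ in base $q$, the number $q^{m-1}(\bfe+1)$ has its low $m-1$ digits equal to $0$, while $q^{m-1}(\bfe+1)-1$ has its low $m-1$ digits all equal to $q-1$ and the remaining higher digits obtained from $\bfe+1$ by subtracting $1$ with the attendant carries. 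So $\Delta(\bfe)$ is genuinely a "local" quantity built out of a bounded number of digits of $\bfe$, which is why a window of length $2m-1$ suffices.

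First I would unwind $\Delta(\bfe)$ using the recursion of \Cref{le:rec_b} and the normalisation \eqref{property_b'2}, so that $b$ is expressed via $F\rb{\floor{n/q^j}}$. For $n_1 := \bfe+1$ I get $b(q^{m-1}n_1) = b(n_1) + b_{m-1}(q^{m-1}n_1)$, and since the low $m-1$ digits of $q^{m-1}n_1$ are zero, $b_{m-1}(q^{m-1}n_1)$ collapses to a sum of $F$ evaluated at $\floor{n_1 q^{m-1}/q^j}$ for $0\le j\le m-2$, i.e. at numbers of the form $n_1 q^{j'}$ with $1\le j'\le m-1$, which by \eqref{property_b'2} sums to $0$; hence $b(q^{m-1}n_1) = b(n_1)$. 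A similar but more careful expansion of $b(q^{m-1}n_1-1)$ — noting that its bottom $m-1$ digits are $q-1$ and its digit block at positions $m-1,\ldots,2m-2$ is determined by the bottom $m$ digits of $n_1$ together with possible borrow — shows that $\Delta(\bfe)$ depends only on $n_1 \bmod q^{m}$ (equivalently on $\bfe \bmod q^m$, which explains the bound $q^{2m-1}$ once one accounts for the highest affected block), and gives a clean formula of the shape
\[
  \Delta(\bfe) \;=\; b(n_1) - b(n_1 - 1) \;+\; (\text{a correction supported on the top block}),
\]
after telescoping the contributions of the all-$(q-1)$ low window against the corresponding window in $q^{m-1}n_1$.

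The crux is then to exploit that $\Delta$ being constant mod $k$ forces rigid additive relations on $b$. Specialising $\bfe$ so that the relevant window of $n_1=\bfe+1$ is constant, the correction term disappears and I get $b(n_1)-b(n_1-1) \equiv c \pmod k$ for all $n_1$ in a suitable arithmetic progression; chaining this over consecutive values yields $b(n) \equiv c\, n + (\text{const}) \pmod k$ along a progression, and more globally that $b(n)$ is, modulo $k$, an affine function of $n$ with slope $c$. Now two cases. If $c\not\equiv 0$: testing $n$ and $qn$ (whose base-$q$ digit blocks agree up to a shift, so $b(qn)=b(n)$ by strong block-additivity with the normalisation) gives $c n \equiv c q n$, i.e. $c(q-1)n \equiv 0 \pmod k$ for all $n$, hence $c(q-1)\equiv 0\pmod k$; since we may also arrange a value with $c\cdot 1$ contribution this sharpens to $\gcd(q-1,k)>1$, contradicting $\gcd(q-1,k)=1$. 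If $c\equiv 0$: then $b(n)$ is constant modulo $k$ on the progression, and bootstrapping via the recursion \eqref{eq:recursion_b} one shows $b(n)\equiv 0\pmod k$ for all $n$ (using $F(0)=0$), i.e. $k \dv \gcd(\{b(n):n\in\N\})$, contradicting the second hypothesis. Either way we are done.

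The main obstacle I anticipate is the bookkeeping in the second paragraph: correctly handling the borrow/carry when passing from $q^{m-1}(\bfe+1)$ to $q^{m-1}(\bfe+1)-1$ across the boundary of the bottom $m-1$ zero digits, and verifying that the resulting dependence is confined to a window of length exactly $2m-1$ so that the claimed range for $\bfe_1,\bfe_2$ is legitimate. Everything after that — the "affine mod $k$" conclusion and the two-case dichotomy — is a short argument using only \Cref{le:rec_b}, \Cref{le:b_mod_m'}, and \eqref{property_b'2}; the delicate part is purely the digit-combinatorial reduction that turns $\Delta(\bfe)$ into $b(n_1)-b(n_1-1)$ plus an isolable top-block correction.
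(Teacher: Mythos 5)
Your overall architecture is the same as the paper's: argue by contraposition, use Lemma~\ref{le:rec_b} together with the normalisation \eqref{property_b'2} to rewrite $\Delta(\bfe)=b(q^{m-1}(\bfe+1)-1)-b(q^{m-1}(\bfe+1))$ as $b(\bfe)-b(\bfe+1)$ plus the local term $g(\bfe):=b_{m-1}(q^{m-1}\bfe+q^{m-1}-1)$, deduce that $b$ is affine modulo $k$ on $\{0,\ldots,q^m-1\}$, kill the slope by comparing $b(1)=b(q)$ via $\gcd(k,q-1)=1$, and contradict Lemma~\ref{le:b_mod_m'}. However, the step that is supposed to produce the affine formula does not work as you describe it. The correction $g(\bfe)$ does \emph{not} disappear when you restrict $\bfe$ to a fixed residue class modulo $q^{m-1}$; it merely becomes a (generally nonzero) constant $g(a)$ attached to that class, so on such a progression you obtain $b(n_1)-b(n_1-1)\equiv g(a)-c$, not $\equiv c$. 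Moreover you cannot ``chain over consecutive values'' inside a single progression of common difference $q^{m-1}$: telescoping $b$ requires the first differences at \emph{every} integer. The correct move -- and the one the paper makes -- is to sum $\Delta(\bfe)$ over all $\bfe<nq^{m-1}$ at once: the part $b(\bfe)-b(\bfe+1)$ telescopes to $-b(nq^{m-1})=-b(n)$, and the $q^{m-1}$-periodicity of $g$ evaluates the rest as $n\sum_{\bfe<q^{m-1}}g(\bfe)$, giving $b(n)\equiv n\bigl(\sum_{\bfe<q^{m-1}}g(\bfe)-q^{m-1}c\bigr)\ (\bmod\ k)$ for $n<q^m$.

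In particular the slope is \emph{not} $c$, so your two-case dichotomy is run on the wrong quantity: with $c\equiv 0$ but $\sum_{\bfe<q^{m-1}}g(\bfe)\not\equiv 0$ your ``constant on the progression'' branch is simply false. Once the affine formula is stated with the correct slope, no dichotomy is needed: $b(1)=b(q)$ and the invertibility of $q-1$ modulo $k$ force the slope to vanish, whence $k\dv b(n)$ for all $n<q^m$, contradicting Lemma~\ref{le:b_mod_m'} and the hypothesis $\gcd(k,\gcd(\{b(n):n\in\N\}))=1$. A further inaccuracy worth flagging: $\Delta(\bfe)$ does not depend only on $\bfe\bmod q^m$, since $b(\bfe)-b(\bfe+1)$ is sensitive to arbitrarily long carry propagation in $\bfe+1$; the bound $q^{2m-1}$ in the statement is not a periodicity phenomenon but comes from needing the constancy assumption for all $\bfe<nq^{m-1}$ with $n<q^m$ in the summation above.
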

\begin{proof}
  Without loss of generality we can restrict ourselves to the case $p \in \P$ where $p\dv k$.
  Let us assume on the contrary that there exists $c$ such that
  \begin{align*}
    b(q^{m-1}(\bfe +1)-1) - b(q^{m-1}(\bfe+1)) \equiv c (\bmod p)
  \end{align*}
  holds for all $\bfe < q^{2m-1}$.
  Under this assumption, we find a new expression for $b(n)\bmod p$, where $n<q^{m}$:
  \begin{align*}
    n \cdot q^{m-1} c &\equiv \sum_{\bfe<n q^{m-1}} \rb{b(q^{m-1}(\bfe+1)-1) - b(q^{m-1}(\bfe+1))}\\
      &\equiv \sum_{\bfe<n q^{m-1}} \rb{b(\bfe) + b_{m-1}(q^{m-1} \bfe + q^{m-1}-1) - b(\bfe+1)}\\
      &\equiv -b(n q^{m-1}) + \sum_{\bfe<nq^{m-1}} b_{m-1}(q^{m-1} \bfe + q^{m-1}-1)\\
      &\equiv -b(n q^{m-1}) + n \sum_{\bfe<q^{m-1}} b_{m-1}(q^{m-1} \bfe + q^{m-1}-1).
  \end{align*}
	The last equality holds since $b_{m-1}(q^{m-1}\bfe + q^{m-1}-1)$ is a $q^{m-1}$ periodic function in $\bfe$.
  This gives
  \begin{align}\label{eq:le_rest_b}
    b(n) = b(nq^{m-1}) \equiv n \rb{\sum_{\bfe<q^{m-1}} b_{m-1}(q^{m-1} \bfe + q^{m-1}-1) - q^{m-1} c} (\bmod p).
  \end{align}
  By comparing this expression for $b(1)$ and $b(q)$ - note that $b(1) = b(q)$ - we find
  \begin{align*}
    (q-1) \rb{\sum_{\bfe<q^{m-1}} b_{m-1}(q^{m-1} \bfe + q^{m-1}-1) - q^{m-1} c} \equiv 0 (\bmod p)\\
    \sum_{\bfe<q^{m-1}} b_{m-1}(q^{m-1} \bfe + q^{m-1}-1) - q^{m-1} c \equiv 0 (\bmod p)
  \end{align*}
  as $\gcd(p,q-1) = 1$.
  
  Together with \eqref{eq:le_rest_b}, this implies that $p\dv b(n)$ for all $n<q^{m}$.
  This is a contradiction to $\gcd(p, \gcd(\{b(n): n \in \N\})) = 1$ by Lemma~\ref{le:b_mod_m'}.
\end{proof}

We will use this result in a different form.

\begin{corollary}\label{co:diff_b}
  Let $b$ be a strongly block-additive function in base $q$ and $m'>1$ such that $\gcd(m',q-1) = 1$ and $\gcd(m', \gcd(\{b(n): n \in \N\})) = 1$.
  For every $\alpha \in \{\frac{1}{m'},\ldots,\frac{m'-1}{m'}\}$ exist $\bfe_1,\bfe_2<q^{2m-1}$ and $d\in \N$ such that $d\alpha \not \in \Z$ and
  \begin{align*}
    &b(q^{m-1}(\bfe_1 + 1)-1) - b(q^{m-1}(\bfe_1+1))\\
		& \qquad - b(q^{m-1}(\bfe_2 + 1)-1) + b(q^{m-1}(\bfe_2+1)) = d.
  \end{align*}
\end{corollary}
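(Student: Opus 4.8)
The plan is to derive this from Lemma~\ref{le:complicated_b_not_const} applied to a suitable modulus. Fix $\alpha = a/m'$ with $1 \le a \le m'-1$, and let $k = m'/\gcd(a,m')$, so that $k > 1$ and $d\alpha \in \Z$ if and only if $k \mid d$. Note that $k \mid m'$, hence $\gcd(k,q-1) = 1$ and $\gcd(k,\gcd(\{b(n):n\in\N\})) = 1$ follow from the hypotheses on $m'$; thus Lemma~\ref{le:complicated_b_not_const} applies with this $k$ and yields $\bfe_1,\bfe_2 < q^{2m-1}$ with
\begin{displaymath}
D := b(q^{m-1}(\bfe_1 + 1)-1) - b(q^{m-1}(\bfe_1+1)) - b(q^{m-1}(\bfe_2 + 1)-1) + b(q^{m-1}(\bfe_2+1)) \not\equiv 0 \pmod{k}.
\end{displaymath}
Set $d := D$. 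Since $d \not\equiv 0 \pmod k$ and $k\nmid d$ is exactly the condition $d\alpha \notin \Z$, we are done — provided we can guarantee $d \in \N$, i.e.\ $d > 0$.

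The one genuine subtlety is the sign of $d$: Lemma~\ref{le:complicated_b_not_const} only produces an \emph{inequality modulo $k$}, so the integer $D$ above could a priori be negative (or the ``natural'' ordering of $\bfe_1,\bfe_2$ could give either sign). This is handled by symmetry: if $D < 0$, simply swap the roles of $\bfe_1$ and $\bfe_2$, which replaces $D$ by $-D > 0$ while preserving $-D \not\equiv 0 \pmod k$ (since $k \nmid D \iff k \nmid -D$). If $D = 0$ this cannot happen, as that would contradict $D \not\equiv 0 \pmod k$. Hence after possibly relabelling we obtain $d = |D| \in \N$ with $k \nmid d$, equivalently $d\alpha \notin \Z$, which is the claim.

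The main obstacle here is really just the translation between the ``mod $k$'' statement of the lemma and the ``exact equation $= d$'' statement of the corollary, together with extracting the right modulus $k$ from $\alpha$; both are routine once one observes that $k = m'/\gcd(a,m')$ is the exact order of $\alpha$ in $\R/\Z$ and that it inherits the coprimality hypotheses from $m'$. No new analytic input is needed — everything rests on Lemma~\ref{le:complicated_b_not_const} and the elementary fact that divisibility by $k$ is sign-insensitive.
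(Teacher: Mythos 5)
Your proposal is correct and follows essentially the same route as the paper: write $\alpha$ in lowest terms, apply Lemma~\ref{le:complicated_b_not_const} with $k$ equal to the reduced denominator (noting that the coprimality hypotheses pass from $m'$ to any divisor), and translate $d\not\equiv 0\pmod k$ into $d\alpha\notin\Z$. Your extra care about the sign of $d$ (swapping $\bfe_1$ and $\bfe_2$ if necessary) addresses a point the paper leaves implicit, but it does not change the argument.
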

\begin{proof}
  Let $\alpha = \frac{x}{y}$ where $\gcd(x,y) = 1$ and $1<y\dv m'$.
  We apply Lemma~\ref{le:complicated_b_not_const} for $k = y$ and find $\bfe_1,\bfe_2$ such that
  \begin{align*}
    &b(q^{m-1}(\bfe_1 + 1)-1) - b(q^{m-1}(\bfe_1+1))\\
		&\qquad - b(q^{m-1}(\bfe_2 + 1)-1) + b(q^{m-1}(\bfe_2+1)) = d,
  \end{align*}
  where
  \begin{align*}
    d \not \equiv 0 (\bmod y).
  \end{align*}
  This implies
  \begin{align*}
    d \alpha = \frac{dx}{y} \not \equiv 0 (\bmod 1).
  \end{align*}
\end{proof}

\section{Bounds on Fourier Transforms} \label{cha:bounds}

The goal of this section is to prove Propositions~\ref{Pro1} and~\ref{Pro2}. To find the necessary bounds we first need to recall one important result 
on the norm of matrix products which was first presented by Drmota, Mauduit and Rivat~\cite{drmotaMauduitRivat2014}. 

Afterwards, we deal with Fourier estimates and formulate Proposition~\ref{Pro1} and Proposition~\ref{Pro2}. 
The following Sections~\ref{sec:Pro1} and~\ref{sec:Pro2} give proofs of Proposition~\ref{Pro1} and Proposition~\ref{Pro2}, respectively.

\subsection{Auxiliary Results for the Bounds of the Fourier Transforms}
In this section we state necessary conditions under which the product of matrices decreases exponentially with respect to the matrix row-sum norm.

\begin{lemma}
\label{le:matrixnorm}
Let ${\mathbf M}_\ell$, $\ell \in \N$, be $N\times N$-matrices with complex entries $M_{\ell;i,j}$, for ${1\le i,j\le N}$,
and absolute row sums
\begin{displaymath}
\sum_{j=1}^N |M_{\ell;i,j}| \le 1 \text{ for } 1 \leq i \leq N.
\end{displaymath}
Furthermore, we assume that there exist integers $m_0\ge 1$ and $m_1\ge 1$ and constants $c_0> 0$ and $\eta > 0$ such that
\begin{enumerate}
\item every product ${\mathbf A} = (A_{i,j})_{(i,j)\in\{1,\ldots,N\}^2}$ of $m_0$ consecutive matrices ${\mathbf M}_\ell$ has the property that,
\begin{align} \label{matrixNorm1}
  |A_{i,1}| \ge c_0 \quad \mbox{or}\quad \sum_{j=1}^N |A_{i,j}| \le 1-\eta \text{ for every row } i;
\end{align}
\item  every product ${\mathbf B} = (B_{i,j})_{(i,j)\in\{1,\ldots,N\}^2}$ of $m_1$ consecutive matrices ${\mathbf M}_\ell$ has the property
\begin{align} \label{matrixNorm2}
  \sum_{j=1}^N |B_{1,j}| \le 1-\eta.
\end{align}
\end{enumerate}

Then there exist constants $C> 0$ and $\delta> 0$ such that 
\begin{align}\label{eqLe0001}
\norminf{\prod_{\ell = r}^{r+k-1} {\mathbf M}_\ell } \le C q^{-\delta k}
\end{align}
uniformly for all $r\ge 0$ and $k\ge 0$ (where $\norminf{\cdot}$ denotes the matrix row-sum norm).
\end{lemma}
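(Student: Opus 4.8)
The plan is to prove the row-sum norm bound by a "renewal" argument: track how the absolute row sums of partial products evolve, showing that every block of $m_0$ steps either forces a large first-column entry or already produces contraction, and that large first-column entries cannot persist because of condition (2). The key quantity to control for a product $\mathbf{P} = \prod_{\ell} \mathbf{M}_\ell$ is, for each row $i$, the pair consisting of the full row sum $\sigma_i(\mathbf{P}) = \sum_j |P_{i,j}|$ and the first-column mass $|P_{i,1}|$. Since each $\mathbf{M}_\ell$ has row sums $\le 1$, row sums of products are non-increasing, so the only issue is to extract \emph{definite} decrease at a linear rate.

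\textbf{Setup and the one-block estimate.} First I would fix $r$ and write $\mathbf{P}_t = \mathbf{M}_{r+t-1}\cdots\mathbf{M}_r$ (or the appropriate order) for the product of the first $t$ matrices, and group the $k$ factors into consecutive blocks of length $L = m_0 + m_1$ (say), so there are about $k/L$ blocks. The elementary observation is that for any matrices $\mathbf{X}$ with row sums $\le 1$ and any $\mathbf{Y}$, $\sigma_i(\mathbf{Y}\mathbf{X}) \le \sum_j |Y_{i,j}| \sigma_j(\mathbf{X}) \le \max_j \sigma_j(\mathbf{X}) \cdot \sigma_i(\mathbf{Y})$, and more usefully $\sigma_i(\mathbf{Y}\mathbf{X}) = \sum_j |(\mathbf{Y}\mathbf{X})_{i,j}| \le \sum_{j,s} |Y_{i,s}||X_{s,j}| = \sum_s |Y_{i,s}| \sigma_s(\mathbf{X})$. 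So if \emph{some} column-index $s$ that row $i$ of $\mathbf{Y}$ puts mass on has $\sigma_s(\mathbf{X}) \le 1-\eta$, then row $i$ of the product contracts by a corresponding amount. Condition (2) says row $1$ of every $m_1$-fold product $\mathbf{B}$ satisfies $\sigma_1(\mathbf{B}) \le 1-\eta$. Condition (1) says row $i$ of every $m_0$-fold product $\mathbf{A}$ either already contracts ($\sigma_i \le 1-\eta$, done for that row) or has $|A_{i,1}| \ge c_0$, i.e. puts mass $\ge c_0$ on column $1$.

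\textbf{Combining the two conditions.} Now take a block of $L = m_0 + m_1$ consecutive matrices and split it as $\mathbf{A}$ (the last $m_0$) times $\mathbf{B}$ (the first $m_1$) — precisely, with the correct multiplication order, $\mathbf{A}\mathbf{B}$ where $\mathbf{B}$ is a product of $m_1$ consecutive $\mathbf{M}_\ell$ and $\mathbf{A}$ a product of $m_0$ consecutive $\mathbf{M}_\ell$. For a fixed row $i$: either $\sigma_i(\mathbf{A}) \le 1-\eta$, in which case $\sigma_i(\mathbf{A}\mathbf{B}) \le \sigma_i(\mathbf{A}) \le 1-\eta$; or $|A_{i,1}| \ge c_0$, in which case
\begin{align*}
\sigma_i(\mathbf{A}\mathbf{B}) \;\le\; \sum_s |A_{i,s}| \sigma_s(\mathbf{B}) \;=\; |A_{i,1}|\sigma_1(\mathbf{B}) + \sum_{s\ge 2}|A_{i,s}|\sigma_s(\mathbf{B}) \;\le\; c_0(1-\eta) + (1 - c_0) \;=\; 1 - c_0\eta,
\end{align*}
using $\sigma_1(\mathbf{B}) \le 1-\eta$, $\sigma_s(\mathbf{B}) \le 1$ for all $s$, and $\sum_{s\ge 2}|A_{i,s}| = \sigma_i(\mathbf{A}) - |A_{i,1}| \le 1 - c_0$. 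So \emph{every} row of every block-product of length $L$ has row sum $\le 1 - \eta' $ with $\eta' := \min(\eta, c_0\eta) = c_0\eta$ (assuming $c_0 \le 1$, else $c_0\eta$ still works as a lower bound on the gap; one can just take $\eta' = c_0 \eta$ when $c_0 < 1$ and $\eta$ otherwise). Hence the whole product of $k$ matrices, grouped into $\lfloor k/L\rfloor$ such blocks with a leftover of at most $L$ matrices (each contributing a harmless factor of row-sum $\le 1$), satisfies $\norminf{\prod_{\ell=r}^{r+k-1}\mathbf{M}_\ell} \le (1-\eta')^{\lfloor k/L \rfloor}$. Writing $1 - \eta' \le q^{-\delta_0}$ for some $\delta_0 > 0$ and absorbing the leftover and the floor into a constant $C$, we get $\norminf{\prod} \le C q^{-\delta k}$ with $\delta = \delta_0 / L$, uniformly in $r$ and $k$, as claimed.

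\textbf{Where the real work is.} The lemma itself, once conditions (1) and (2) are in hand, is the short bookkeeping argument above; the genuine obstacle in the paper is \emph{verifying} hypotheses (1) and (2) for the specific transfer matrices built from the Fourier terms $H_\lambda^I(h,d)$ — that is where the combinatorial structure of the digital function $b$ (Lemma~\ref{le:complicated_b_not_const} and Corollary~\ref{co:diff_b}) enters, and it is handled later when proving Propositions~\ref{Pro1} and~\ref{Pro2}. One subtlety to be careful about within the present proof is the multiplication order (left vs.\ right products of "consecutive" matrices) and making sure the block decomposition of a product of $k$ consecutive matrices into blocks of length $L$ lines up with the "consecutive $m_0$ matrices" / "consecutive $m_1$ matrices" phrasing of the hypotheses; this is purely notational but must be stated cleanly. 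I do not expect any difficulty beyond that.
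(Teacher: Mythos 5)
Your proof is correct and complete: the decomposition of each block of $m_0+m_1$ consecutive factors as $\mathbf{A}\mathbf{B}$, the case distinction from condition (1) on each row of $\mathbf{A}$, the use of condition (2) on the first row of $\mathbf{B}$ to get $\sigma_i(\mathbf{A}\mathbf{B})\le 1-\eta\min(c_0,1)$, and submultiplicativity of $\norminf{\cdot}$ give exactly the claimed geometric decay. The paper itself gives no proof but only cites \cite{drmotaMauduitRivat2014}, and your argument is essentially the one found there, so there is nothing further to compare.
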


\begin{proof}
  See \cite{drmotaMauduitRivat2014}.
\end{proof}

\begin{lemma}\label{le:sum_4}
  Let $x_1,x_2,\xi_1,\xi_2 \in \R$. Then
  \begin{align*}
    \abs{\e(x_1) + \e(x_1+\xi_1)} + \abs{\e(x_2) + \e(x_2+\xi_2)} \leq 4 - 8 \rb{\sin\rb{\frac{\pi \norm{\xi_1-\xi_2}}{4}}}^2.
  \end{align*}

\end{lemma}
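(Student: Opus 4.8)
The plan is to reduce the two‑term estimate to a single elementary inequality for $\abs{1+\e(\theta)}$ and then combine the two contributions. First I would note that $\abs{\e(x)+\e(x+\xi)} = \abs{1+\e(\xi)} = 2\abs{\cos(\pi\xi)} = 2\abs{\cos(\pi\norm{\xi})}$, since the quantity depends only on $\norm{\xi}$, the distance of $\xi$ to the nearest integer. Writing $\delta_j = \norm{\xi_j}\in[0,\tfrac12]$ for $j=1,2$, the claim becomes
\begin{align*}
  2\cos(\pi\delta_1) + 2\cos(\pi\delta_2) \leq 4 - 8\rb{\sin\rb{\frac{\pi\norm{\xi_1-\xi_2}}{4}}}^2.
\end{align*}
Using $2\cos(\pi\delta) = 2 - 4\sin^2(\pi\delta/2)$, the left‑hand side equals $4 - 4\sin^2(\pi\delta_1/2) - 4\sin^2(\pi\delta_2/2)$, so after cancelling the $4$ and dividing by $4$ it suffices to prove
\begin{align*}
  \sin^2\rb{\frac{\pi\delta_1}{2}} + \sin^2\rb{\frac{\pi\delta_2}{2}} \geq 2\sin^2\rb{\frac{\pi\norm{\xi_1-\xi_2}}{4}}.
\end{align*}

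Next I would control the right‑hand side. Since $\norm{\cdot}$ satisfies the triangle inequality, $\norm{\xi_1-\xi_2}\leq \norm{\xi_1}+\norm{\xi_2} = \delta_1+\delta_2 \leq 1$, hence $\frac{\pi\norm{\xi_1-\xi_2}}{4}\leq\frac{\pi}{4}$ and in particular the argument lies in $[0,\pi/4]$ where $\sin$ is increasing and nonnegative; therefore
\begin{align*}
  \sin^2\rb{\frac{\pi\norm{\xi_1-\xi_2}}{4}} \leq \sin^2\rb{\frac{\pi(\delta_1+\delta_2)}{4}}.
\end{align*}
So it is enough to show $\sin^2(\pi\delta_1/2) + \sin^2(\pi\delta_2/2) \geq 2\sin^2\rb{\pi(\delta_1+\delta_2)/4}$ for $\delta_1,\delta_2\in[0,\tfrac12]$. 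Setting $a = \pi\delta_1/2$, $b = \pi\delta_2/2$ with $a,b\in[0,\pi/4]$, this reads $\sin^2 a + \sin^2 b \geq 2\sin^2\rb{\tfrac{a+b}{2}}$, i.e. $\frac{1-\cos 2a}{2} + \frac{1-\cos 2b}{2} \geq 1 - \cos(a+b)$, equivalently $\cos(a+b) \geq \tfrac12(\cos 2a + \cos 2b) = \cos(a+b)\cos(a-b)$. Since $a+b\in[0,\pi/2]$ we have $\cos(a+b)\geq 0$, and $\cos(a-b)\leq 1$, so the inequality holds. This finishes the proof.

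The only genuinely delicate point is making sure every argument of $\sin$ stays in a range where it is nonnegative and monotone, so that squaring preserves the inequalities; this is exactly why the hypothesis implicitly needs $\delta_1+\delta_2\le 1$, which is automatic from $\delta_j\le\tfrac12$. No step requires anything beyond the identities $\abs{1+\e(\theta)} = 2\abs{\cos\pi\theta}$, the half‑angle formula, the product‑to‑sum formula $\cos 2a+\cos 2b = 2\cos(a+b)\cos(a-b)$, and the triangle inequality for $\norm{\cdot}$, so I do not expect any real obstacle — the main care is bookkeeping the domains.
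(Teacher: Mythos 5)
Your proof is correct. The paper gives no argument of its own here (it simply points to the end of the proof of Lemma 12 in Mauduit--Rivat), and your computation --- reducing each term to $2\cos(\pi\norm{\xi_j})$, applying the half-angle identity, the triangle inequality for $\norm{\cdot}$, and the sum-to-product formula, with the domain checks $\norm{\xi_j}\le\tfrac12$ ensuring all sine and cosine arguments stay in $[0,\pi/2]$ --- is precisely the kind of straightforward trigonometric verification that reference carries out; the equality case $\norm{\xi_1}=\norm{\xi_2}=\tfrac14$, $\norm{\xi_1-\xi_2}=\tfrac12$ confirms your chain of inequalities is tight, so nothing has been given away.
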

\begin{proof}
  The proof is a straight-forward computation and can be found for example at the end of the proof of \cite[Lemma 12]{mauduit_rivat_rs}.
\end{proof}

\subsection{Fourier estimates}\label{sec:Fourier_squares}

In this section, we discuss some general properties of the occurring Fourier terms. 
%Therefore, we need some more definitions.

For any $k\in\N$, we denote by $\mathcal{I}_k$ the set of integer vectors $I = (i_0,\ldots,i_{k-1})$ with $i_0 < q^{m-1}$ and 
$i_{\ell-1} \leq i_{\ell} \leq i_{\ell-1} + q^{m-1}$ for $1\le \ell\le k-1$.\\
Furthermore, we denote by $\mathcal{I}'_k$ the set of integer vectors $I' = (i'_0,\ldots,i'_{k-1})$ with $i'_0 = 0$ and
$i'_{\ell-1} \leq i'_{\ell} \leq i'_{\ell-1} +1$.\\
This set $\mathcal{I}_k$ obviously consists of $q^{m-1}(q^{m-1}+1)^{k-1}$ elements. 
For any $I\in\mathcal{I}'_k$, $h\in\Z$ and $(d,\lambda)\in\N^2$, we define
\begin{align}\label{eq:def-H}
H_{\lambda}^{I}(h,d) = \frac 1{q^{\lambda+m-1}} \sum_{0\le u < q^{\lambda+m-1}} 
    \e \rb{ \sum_{\ell = 0}^{k-1} \alpha_\ell b_{\lambda}(u+\ell d + i_\ell) - hu q^{-\lambda-m+1} },
\end{align} 
for fixed coefficients $\alpha_\ell \in \{\frac0{m'},\ldots,\frac{m'-1}{m'}\}$. 
This sum $H_{\lambda}^{I}(\,.\,,d)$ can then be seen as the discrete Fourier transform 
of the function 
\begin{displaymath}
  u \mapsto \e\rb{ \sum_{\ell = 0}^{k-1} \alpha_\ell b_{\lambda}(u+\ell d + i_\ell) },
\end{displaymath}
which is $q^{\lambda+m-1}$ periodic.

Furthermore, we define the important parameter
\begin{displaymath}
  K := \alpha_0 + \cdots + \alpha_{k-1}.
\end{displaymath}

We would like to find a simple recursion for $H_{\lambda}$ in terms of $H_{\lambda-1}$.
Instead we relate it to a different function for which the recursion is much simpler:

\begin{align*}
	G_{\lambda}^{I} (h,d) = \frac{1}{q^{\lambda}} \sum_{u<q^{\lambda}} \e\rb{\sum_{\ell = 0}^{k-1} \alpha_{\ell} b_{\lambda}(q^{m-1}(u+\ell d) + i_{\ell})-huq^{-\lambda}}.
\end{align*}

This sum $G_{\lambda}^{I}(\,.\,,d)$ can then be seen as the discrete Fourier transform of the function
\begin{displaymath}
  u \mapsto \e\rb{\sum_{\ell = 0}^{k-1} \alpha_{\ell} b_{\lambda}(q^{m-1}(u+\ell d) + i_{\ell})},
\end{displaymath}
which is $q^{\lambda}$ periodic.
We show now how $G$ and $H$ are related.
\begin{lemma}\label{Le0}
  Let $I \in \mathcal{I}'_k, h \in \Z, (d,\lambda) \in \N^2$ and $\delta \in \{0,\ldots,q^{m-1}-1\}$. It holds
  \begin{small}
    \begin{align} \label{eq:H-recursion}
      H_{\lambda}^{I}(h,q^{m-1} d+\delta) = \frac{1}{q^{m-1}} \sum_{\varepsilon = 0}^{q^{m-1}-1} \e\rb{-\frac{h \varepsilon}{q^{\lambda+m-1}} }
      \G_{\lambda}^{J_{\varepsilon, \delta}}(h,d),
    \end{align}
  \end{small}
  where
  \begin{align*}
    J_{\varepsilon,\delta} = J_{\varepsilon,\delta}(I) = \rb{i_{\ell} + \ell \delta + \varepsilon}_{\ell \in \{0,\ldots,k-1\}} \in \mathcal{I}_k.
  \end{align*}
\end{lemma}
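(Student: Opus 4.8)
The plan is to prove the identity \eqref{eq:H-recursion} by a direct manipulation of the defining sum for $H_\lambda^I$, splitting the summation variable according to its residue modulo $q^{m-1}$ and then recognizing the inner sum as a copy of $G_\lambda$ with a shifted index vector. Concretely, I would start from
\begin{displaymath}
H_{\lambda}^{I}(h,q^{m-1}d+\delta) = \frac{1}{q^{\lambda+m-1}} \sum_{0\le u < q^{\lambda+m-1}} \e\rb{ \sum_{\ell=0}^{k-1} \alpha_\ell b_\lambda\rb{u + \ell(q^{m-1}d+\delta) + i_\ell} - h u q^{-\lambda-m+1} }
\end{displaymath}
and write $u = q^{m-1}v + \varepsilon$ with $0\le \varepsilon < q^{m-1}$ and $0\le v < q^\lambda$. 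Then $u + \ell(q^{m-1}d+\delta) + i_\ell = q^{m-1}(v+\ell d) + (\ell\delta + i_\ell + \varepsilon)$, which is exactly the argument appearing inside $G_\lambda^{J_{\varepsilon,\delta}}$, with $J_{\varepsilon,\delta} = (i_\ell + \ell\delta + \varepsilon)_\ell$. The phase factor splits as $\e(-huq^{-\lambda-m+1}) = \e(-h\varepsilon q^{-\lambda-m+1})\,\e(-hv q^{-\lambda})$, so the double sum factors into the claimed sum over $\varepsilon$ of $\e(-h\varepsilon q^{-\lambda-m+1})$ times $\frac{1}{q^\lambda}\sum_{v<q^\lambda}\e(\sum_\ell \alpha_\ell b_\lambda(q^{m-1}(v+\ell d)+(i_\ell+\ell\delta+\varepsilon)) - hv q^{-\lambda}) = G_\lambda^{J_{\varepsilon,\delta}}(h,d)$, and the normalizing constant $q^{-\lambda-m+1} = q^{-(m-1)}q^{-\lambda}$ produces the factor $1/q^{m-1}$ in front. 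This gives \eqref{eq:H-recursion}.

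Two small points need care in writing this up. First, I must check that $b_\lambda$ is genuinely $q^{\lambda+m-1}$ periodic so that replacing $u$ by $u + \ell(q^{m-1}d+\delta) + i_\ell$ (which may exceed $q^{\lambda+m-1}$) is harmless; this is exactly the periodicity remarked after Definition~\ref{def:truncated_function}, and the extension of $b_\lambda$ to $\Z$ is what lets me treat the shifted arguments without fuss. Second, I should verify that $J_{\varepsilon,\delta}\in\mathcal{I}_k$: here I use that $I\in\mathcal{I}'_k$ means $i_0=0$ and $i_{\ell-1}\le i_\ell\le i_{\ell-1}+1$, that $\delta\le q^{m-1}-1$, and that $\varepsilon\le q^{m-1}-1$; then the first coordinate is $i_0 + 0 + \varepsilon = \varepsilon < q^{m-1}$, and consecutive coordinates satisfy $(i_\ell+\ell\delta+\varepsilon) - (i_{\ell-1}+(\ell-1)\delta+\varepsilon) = (i_\ell - i_{\ell-1}) + \delta \in [0, 1 + (q^{m-1}-1)] = [0,q^{m-1}]$, which is precisely the defining condition for $\mathcal{I}_k$. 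So the index vector lands where claimed.

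I do not expect a genuine obstacle here; the statement is essentially a bookkeeping identity (the ``easy'' passage from the $H$-normalization with block length $q^{m-1}$ baked into the digit window to the cleaner $G$-normalization). The only mild subtlety is keeping the two exponential normalizations straight — $H$ carries $q^{-\lambda-m+1}$ while $G$ carries $q^{-\lambda}$ — and making sure the split of $u$ into $(v,\varepsilon)$ is consistent with both the digit-block structure of $b_\lambda$ and the Fourier kernel; once $u = q^{m-1}v+\varepsilon$ is chosen (rather than, say, $u = v + q^\lambda\varepsilon$), everything lines up. I would present the computation as a short displayed chain of equalities, with a one-line justification for the periodicity step and a one-line check that $J_{\varepsilon,\delta}\in\mathcal{I}_k$.
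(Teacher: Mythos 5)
Your proof is correct and is essentially identical to the paper's: the paper also splits $u = q^{m-1}u' + \varepsilon$, regroups the argument of $b_\lambda$ as $q^{m-1}(u'+\ell d) + (i_\ell + \ell\delta + \varepsilon)$, factors the Fourier kernel, and absorbs $q^\lambda$ into the normalization to get the $1/q^{m-1}$ prefactor. Your explicit check that $J_{\varepsilon,\delta}\in\mathcal{I}_k$ is the detail the paper dismisses with ``one checks easily,'' and your computation of it is correct.
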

\begin{proof}
One checks easily that $J_{\varepsilon,\delta}(I)\in \mathcal{I}_k$.
We evaluate $H_{\lambda}^{I}(h,q^{m-1} d + \delta)$:

\begin{align*}
&H_{\lambda}^{I}(h,q^{m-1} d + \delta)\\
=&\frac 1{q^{\lambda+m-1}} 
\sum_{0\le u < q^{\lambda+m-1}} \e\rb{\sum_{\ell = 0}^{k-1} \alpha_\ell b_{\lambda}(u+\ell (q^{m-1} d + \delta) + i_\ell) - h u q^{-\lambda-m+1} }\\
=& \frac{1}{q^{\lambda+m-1}} \sum_{\varepsilon <q^{m-1}} \sum_{0 \leq u < q^{\lambda}} \e\rb{-\frac{h (q^{m-1} u)}{q^{\lambda+m-1}}} \e\rb{-\frac{h \varepsilon}{q^{\lambda+m-1}}} \\
&\qquad \cdot \e\rb{\sum_{\ell=0}^{k-1} \alpha_{\ell} b_{\lambda}(q^{m-1} u + \varepsilon + \ell (q^{m-1} d + \delta) +i_{\ell})}\\
=& \frac{1}{q^{\lambda+m-1}} \sum_{\varepsilon <q^{m-1}} \sum_{ u < q^{\lambda}} \e\rb{-\frac{h u}{q^{\lambda}}} \e\rb{-\frac{h \varepsilon}{q^{\lambda+m-1}}}\\
 &\qquad \qquad \cdot \e\rb{\sum_{\ell=0}^{k-1}\alpha_{\ell} b_{\lambda}\rb{(u + \ell d)q^{m-1} + (\ell \delta + i_{\ell} +\varepsilon)}}\\
=& \frac{1}{q^{m-1}} \sum_{\varepsilon <q^{m-1}} \e\rb{-\frac{h \varepsilon}{q^{\lambda+m-1}} }
	\G_{\lambda}^{J_{\varepsilon, \delta}}(h,d).
\end{align*}

\end{proof}

Next we define a transformation on $\mathcal{I}_k$ and a weight function $v$.
\begin{definition}
  Let $j\geq 1$ and $\varepsilon,\delta \in \{0,\ldots,q^{j}-1\}$.
  Then, we define for $I \in \mathcal{I}_k$
  \begin{align*}
    &T_{\varepsilon,\delta}^{j}(I) := \rb{\floor{\frac{i_{\ell} + q^{m-1}(\varepsilon + \ell \delta)}{q^{j}}}}_{\ell \in \{0,\ldots,k-1\}}\\
    &v^{j}(I,\varepsilon,\delta) := \e\rb{\sum_{\ell<k} \alpha_{\ell} \cdot b_{j}(i_{\ell} + q^{m-1}(\varepsilon + \ell \delta))}.
  \end{align*}
\end{definition}
We see immediately that $\abs{v^{j}(I,\varepsilon,\delta)} = 1$ for all possible values of $j,I,\varepsilon$ and $\delta$.
Furthermore, we extend the definition of $T^{j}$ for arbitrary $\varepsilon,\delta$ by
\begin{align*}
  T_{\varepsilon,\delta}^{j}(I) := T_{\varepsilon \bmod q^j,\delta \bmod q^j}^{j}(I).
\end{align*}

The next Lemma shows some basic properties of these functions.
\begin{lemma}
  Let $\lambda,j,j_1,j_2 \in \N$, $\varepsilon,\delta \in \{0,\ldots,q^{j}-1\}$ and $\varepsilon_i,\delta_i \in \{0,\ldots,q^{j_i}-1\}$. Then, the following facts hold.
  \begin{itemize}
    \item $T_{\varepsilon,\delta}^{j}(I) \in \mathcal{I}_{k}$\\
    \item $T_{\varepsilon_2, \delta_2}^{j_2} \circ T_{\varepsilon_1, \delta_1}^{j_1} = T_{\varepsilon_2 q^{j_1} + \varepsilon_1,\delta_2 q^{j_1} + \delta_1}^{j_1+j_2}$\\
    \item $G_{\lambda}^{I}(h,d) = \frac{1}{q^{\lambda}}\sum_{u<q^{\lambda}} v^{\lambda}(I,u,d) \e(-huq^{-\lambda})$.
  \end{itemize}
\end{lemma}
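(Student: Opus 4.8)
The plan is to verify the three assertions in turn, each being a direct unwinding of the definitions. First I would check $T_{\varepsilon,\delta}^{j}(I)\in\mathcal{I}_k$. Writing $\tilde i_\ell := i_\ell + q^{m-1}(\varepsilon+\ell\delta)$, so that the $\ell$-th entry of $T_{\varepsilon,\delta}^{j}(I)$ is $\floor{\tilde i_\ell/q^{j}}$, I would use that $I\in\mathcal{I}_k$ gives $i_0<q^{m-1}$ and $i_{\ell-1}\le i_\ell\le i_{\ell-1}+q^{m-1}$. Hence $\tilde i_0 = i_0 + q^{m-1}\varepsilon$ and $\tilde i_\ell - \tilde i_{\ell-1} = (i_\ell - i_{\ell-1}) + q^{m-1}\delta$, which lies between $q^{m-1}\delta \ge 0$ and $q^{m-1}(\delta+1)$. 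Since flooring by $q^{j}$ is monotone and, for the first coordinate, $\floor{(i_0+q^{m-1}\varepsilon)/q^{j}} \le \floor{(q^{m-1}-1 + q^{m-1}(q^j-1))/q^j} = \floor{(q^{m-1+j}-1)/q^j} = q^{m-1}-1 < q^{m-1}$ (using $\varepsilon<q^{j}$), the first defining inequality of $\mathcal{I}_k$ holds; monotonicity of the floor together with the bound $\tilde i_\ell - \tilde i_{\ell-1} \le q^{m-1}\delta + q^{m-1} \le q^{m-1+j}$ needs the slightly sharper observation that consecutive floors of quantities differing by at most $q^{m-1}(\delta+1)$ with $\delta<q^j$ differ by at most $q^{m-1}$, which follows because $\floor{a/q^j}-\floor{b/q^j}\le \floor{(a-b)/q^j}+1$ and $(a-b)/q^j \le q^{m-1}(\delta+1)/q^j < q^{m-1}(q^j+1)/q^j$; a clean way is to note $q^{m-1}(\varepsilon+\ell\delta) \equiv 0 \pmod{q^{m-1}}$ is irrelevant, and instead bound directly $\floor{\tilde i_\ell/q^j} - \floor{\tilde i_{\ell-1}/q^j} \le \floor{(\tilde i_\ell - \tilde i_{\ell-1} + q^j-1)/q^j}$ is not sharp enough, so I would rather argue digit-wise. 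This bookkeeping is the only mildly delicate point.

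Second, the cocycle identity $T_{\varepsilon_2,\delta_2}^{j_2}\circ T_{\varepsilon_1,\delta_1}^{j_1} = T_{\varepsilon_2 q^{j_1}+\varepsilon_1,\,\delta_2 q^{j_1}+\delta_1}^{j_1+j_2}$. Applying $T_{\varepsilon_1,\delta_1}^{j_1}$ first replaces $i_\ell$ by $\floor{(i_\ell + q^{m-1}(\varepsilon_1+\ell\delta_1))/q^{j_1}}$; then applying $T_{\varepsilon_2,\delta_2}^{j_2}$ yields $\floor{(\floor{(i_\ell + q^{m-1}(\varepsilon_1+\ell\delta_1))/q^{j_1}} + q^{m-1}(\varepsilon_2+\ell\delta_2))/q^{j_2}}$. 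Using the standard identity $\floor{\floor{x/a}/b} = \floor{x/(ab)}$ for positive integers $a,b$, together with $\floor{(\floor{y/q^{j_1}}+z)/q^{j_2}} = \floor{(y + q^{j_1}z)/q^{j_1+j_2}}$ (valid for integer $z$, again by the nested-floor rule applied to $y+q^{j_1}z$), this becomes $\floor{(i_\ell + q^{m-1}(\varepsilon_1+\ell\delta_1) + q^{j_1}q^{m-1}(\varepsilon_2+\ell\delta_2))/q^{j_1+j_2}} = \floor{(i_\ell + q^{m-1}((\varepsilon_1+q^{j_1}\varepsilon_2) + \ell(\delta_1+q^{j_1}\delta_2)))/q^{j_1+j_2}}$, which is exactly the $\ell$-th coordinate of $T_{\varepsilon_2 q^{j_1}+\varepsilon_1,\,\delta_2 q^{j_1}+\delta_1}^{j_1+j_2}(I)$.

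Third, the Fourier-transform formula $G_{\lambda}^{I}(h,d) = \frac1{q^\lambda}\sum_{u<q^\lambda} v^{\lambda}(I,u,d)\,\e(-huq^{-\lambda})$. By definition $G_\lambda^I(h,d) = \frac1{q^\lambda}\sum_{u<q^\lambda}\e\!\big(\sum_{\ell<k}\alpha_\ell b_\lambda(q^{m-1}(u+\ell d)+i_\ell) - huq^{-\lambda}\big)$, while $v^{\lambda}(I,u,d) = \e\!\big(\sum_{\ell<k}\alpha_\ell b_\lambda(i_\ell + q^{m-1}(u+\ell d))\big)$. The two exponential arguments coincide (note $q^{m-1}(u+\ell d)+i_\ell = i_\ell + q^{m-1}(u+\ell d)$), so the claimed equality is immediate. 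I expect the main obstacle to be purely notational: getting the index ranges and the monotonicity estimate in the first bullet exactly right, since that is where the geometry of $\mathcal{I}_k$ enters; the other two bullets are formal consequences of the nested-floor identity $\floor{\floor{x/a}/b}=\floor{x/(ab)}$ and the definitions.
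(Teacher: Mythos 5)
Your overall plan coincides with the paper's (the paper simply says the first two facts are ``direct consequences of basic properties of the floor function'' and the third is a reformulation of the definition of $G$). Your treatment of the second and third bullets is correct and complete: the nested-floor identity $\floor{\floor{x/a}/b}=\floor{x/(ab)}$, applied to $y+q^{j_1}z$ with $z\in\Z$, gives exactly the composition law, and the third bullet is indeed immediate from the definitions of $G_\lambda^I$ and $v^\lambda$.

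The first bullet, however, is left with an acknowledged but unclosed gap. You correctly identify that the crude inequality $\floor{a/q^j}-\floor{b/q^j}\le\floor{(a-b)/q^j}+1$ only yields an upper bound of $q^{m-1}+1$ for the gap between consecutive coordinates of $T^j_{\varepsilon,\delta}(I)$, which is one too many, and you then defer to an unspecified ``digit-wise'' argument. The clean closing step is the following: since $i_\ell-i_{\ell-1}\le q^{m-1}$ and $\delta\le q^j-1$, one has
\begin{align*}
\tilde i_\ell \;=\; \tilde i_{\ell-1}+(i_\ell-i_{\ell-1})+q^{m-1}\delta \;\le\; \tilde i_{\ell-1}+q^{m-1}(\delta+1)\;\le\; \tilde i_{\ell-1}+q^{m-1}\cdot q^{j},
\end{align*}
and the added quantity $q^{m-1}\cdot q^{j}$ is an exact multiple of $q^{j}$. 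Hence by monotonicity of the floor and the identity $\floor{(x+nq^j)/q^j}=\floor{x/q^j}+n$ for $n\in\Z$,
\begin{align*}
\floor{\tilde i_\ell/q^j}\;\le\;\floor{(\tilde i_{\ell-1}+q^{m-1+j})/q^j}\;=\;\floor{\tilde i_{\ell-1}/q^j}+q^{m-1},
\end{align*}
which is precisely the required defining inequality of $\mathcal{I}_k$. With this one line inserted, your argument for the first bullet (whose treatment of $i_0$ via $\tilde i_0\le q^{m-1+j}-1$ is already correct) is complete, and the whole proof matches the paper's intended route.
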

\begin{proof}
  The first two facts are direct consequences of basic properties of the floor function and the last fact is just a reformulation of the definition of $G$ in terms of $v$.
\end{proof}
Now we can find a nice recursion for the Fourier transform $G$.

\begin{lemma}\label{le:rec_G}
  Let $I \in \mathcal{I}_k, h \in \Z, d,\lambda \in \N$ and $1\leq j\leq \lambda, \delta \in \{0,\ldots,q^{j}-1\}$. We have
  \begin{align*}
    G_{\lambda}^{I}(h,q^jd+\delta) = \frac{1}{q^{j}} \sum_{\varepsilon<q^j} \e(-h\varepsilon q^{-\lambda})v^j(I,\varepsilon,\delta)\cdot G_{\lambda-j}^{T_{\varepsilon,\delta}^{j}(I)}(h,d).
  \end{align*}

\end{lemma}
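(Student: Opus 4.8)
The plan is to unfold $G_{\lambda}^{I}$ in terms of the weight function $v$, split the summation variable into its top $\lambda-j$ digits and its bottom $j$ digits, and reduce everything to a multiplicativity (cocycle) identity for $v^{\lambda}$ under this splitting. Using the third bullet of the Lemma preceding this one, we have
\[
 G_{\lambda}^{I}(h,q^{j}d+\delta) = \frac{1}{q^{\lambda}}\sum_{u<q^{\lambda}} v^{\lambda}(I,u,q^{j}d+\delta)\,\e(-huq^{-\lambda}),
\]
and we substitute $u = q^{j}\tilde u+\varepsilon$ with $0\le\varepsilon<q^{j}$ and $0\le\tilde u<q^{\lambda-j}$.

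The core of the proof is the identity
\[
 v^{\lambda}(I,\,q^{j}\tilde u+\varepsilon,\,q^{j}d+\delta) = v^{\lambda-j}\rb{T_{\varepsilon,\delta}^{j}(I),\tilde u,d}\cdot v^{j}(I,\varepsilon,\delta),
\]
which I would prove index by index. Writing $d' = q^{j}d+\delta$ we have $u+\ell d' = q^{j}(\tilde u+\ell d)+(\varepsilon+\ell\delta)$, hence
\[
 i_{\ell}+q^{m-1}(u+\ell d') = q^{j}\rb{q^{m-1}(\tilde u+\ell d) + \floor{\tfrac{i_{\ell}+q^{m-1}(\varepsilon+\ell\delta)}{q^{j}}}} + r_{\ell},
\]
where $r_{\ell} := \rb{i_{\ell}+q^{m-1}(\varepsilon+\ell\delta)}\bmod q^{j}\in\{0,\ldots,q^{j}-1\}$, and the floor appearing here is precisely the $\ell$-th component of $T_{\varepsilon,\delta}^{j}(I)$. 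I then apply Lemma~\ref{le:rec_b} (with $\alpha=j$) to split $b_{\lambda}$ of this integer as $b_{\lambda-j}$ of the quotient part plus $b_{j}$ of the whole integer. The quotient part equals $q^{m-1}(\tilde u+\ell d)$ plus the $\ell$-th component of $T_{\varepsilon,\delta}^{j}(I)$, so the first summand is exactly the term contributing to $v^{\lambda-j}(T_{\varepsilon,\delta}^{j}(I),\tilde u,d)$. For the second summand, rewrite the whole integer as $q^{j+m-1}(\tilde u+\ell d) + i_{\ell}+q^{m-1}(\varepsilon+\ell\delta)$ and use that $b_{j}$ is $q^{j+m-1}$-periodic to drop the first term, leaving $b_{j}(i_{\ell}+q^{m-1}(\varepsilon+\ell\delta))$, which is the term contributing to $v^{j}(I,\varepsilon,\delta)$. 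Multiplying the factors $\e(\alpha_{\ell}\,\cdot\,)$ over $\ell$ gives the displayed factorization. (The case $j=\lambda$ is immediate, since then $b_{\lambda-j}=b_{0}\equiv 0$ and $b_{\lambda}=b_{j}$.)

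Substituting this factorization into the sum and using $\e(-huq^{-\lambda}) = \e(-h\tilde u q^{-(\lambda-j)})\,\e(-h\varepsilon q^{-\lambda})$, the double sum over $(\varepsilon,\tilde u)$ separates: the $\tilde u$-sum becomes $q^{\lambda-j}$ times $\frac{1}{q^{\lambda-j}}\sum_{\tilde u<q^{\lambda-j}} v^{\lambda-j}(T_{\varepsilon,\delta}^{j}(I),\tilde u,d)\,\e(-h\tilde u q^{-(\lambda-j)})$, which is exactly $G_{\lambda-j}^{T_{\varepsilon,\delta}^{j}(I)}(h,d)$ by the same ``$G$ in terms of $v$'' formula. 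Collecting the remaining constant $q^{\lambda-j}/q^{\lambda}=q^{-j}$ yields
\[
 G_{\lambda}^{I}(h,q^{j}d+\delta) = \frac{1}{q^{j}}\sum_{\varepsilon<q^{j}} \e(-h\varepsilon q^{-\lambda})\,v^{j}(I,\varepsilon,\delta)\cdot G_{\lambda-j}^{T_{\varepsilon,\delta}^{j}(I)}(h,d),
\]
which is the assertion. The only genuine work is the bookkeeping inside the key identity --- correctly matching the quotient/remainder split with $T_{\varepsilon,\delta}^{j}$ and invoking the $q^{j+m-1}$-periodicity of $b_{j}$ --- and I expect that (still routine) step to be the main obstacle.
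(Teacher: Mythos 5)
Your proposal is correct and follows essentially the same route as the paper: both split the summation variable as $u = q^{j}\tilde u + \varepsilon$, apply the recursion of Lemma~\ref{le:rec_b} with $\alpha = j$ to decompose $b_{\lambda}$ into a $b_{\lambda-j}$ part (yielding $T_{\varepsilon,\delta}^{j}(I)$) and a $b_{j}$ part (yielding $v^{j}(I,\varepsilon,\delta)$ via the $q^{j+m-1}$-periodicity of $b_{j}$), and then separate the double sum. The only cosmetic difference is that you isolate the key step as an explicit cocycle identity for $v^{\lambda}$ before summing, whereas the paper performs the same manipulation inline inside the exponential sum.
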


\begin{proof}
	We evaluate $G_{\lambda}^{I} (h,q^j d + \delta)$ and use \eqref{eq:recursion_b_lambda}:
	\begin{small}
	\begin{align*}
		G_{\lambda}^{I} &(h,q^j d + \delta) = \frac{1}{q^{\lambda}} \sum_{u < q^{\lambda}} \e\rb{\sum_{\ell = 0}^{k-1} \alpha_{\ell} b_{\lambda}(q^{m-1}(u+\ell (q^j d + \delta)) + i_{\ell}) - h u q^{-\lambda}}\\
		&= \frac{1}{q^j} \sum_{\varepsilon<q^j} \frac{1}{q^{\lambda-j}} \sum_{u < q^{\lambda-j}} \e\rb{\sum_{\ell = 0}^{k-1} \alpha_{\ell} b_{\lambda} (q^{m-1+j} (u+\ell d) + q^{m-1} (\varepsilon + \ell \delta) + i_{\ell})}\\
		&\quad \cdot \e(-h(u q^j +\varepsilon)q^{-\lambda})\\
		&= \frac{1}{q^j} \sum_{\varepsilon<q^j} \e\rb{\sum_{\ell = 0}^{k-1} \alpha_{\ell} b_j(q^{m-1}(\varepsilon + \ell \delta) + i_{\ell})} \e(-h\varepsilon q^{-\lambda})\\
		&\quad \cdot \frac{1}{q^{\lambda-j}} \sum_{u<q^{\lambda-j}} \e\rb{\sum_{\ell = 0}^{k-1} \alpha_{\ell} b_{\lambda-j} \rb{q^{m-1}(u + \ell d) + \floor{\frac{\varepsilon q^{m-1} + \ell \delta q^{m-1} + i_{\ell}}{q^j}}} - h u q^{-\lambda+j}}\\
		&= \frac{1}{q^j} \sum_{\varepsilon<q^j} v^j(I,\varepsilon, \delta) \e(-h \varepsilon q^{-\lambda})\cdot G_{\lambda-j}^{T_{\varepsilon, \delta}^j(I)}(h,d).
	\end{align*}
	\end{small}
\end{proof}

The following propositions are crucial for our proof of the main Theorem~\ref{Thexponentialsums}. 

\begin{proposition}\label{Pro1}
  If $K \equiv 0 (\bmod 1)$ and $\frac 12 \lambda \le  \lambda' \le \lambda$, then there exists $\eta > 0$ such that
  for any $I\in \mathcal{I}'_k$
  \begin{displaymath}
    \frac 1{q^{\lambda'}} \sum_{0\le d < q^{\lambda'}} \abs{H_\lambda^{I}(h,d)}^2 \ll  q^{-\eta \lambda}
  \end{displaymath}
  holds uniformly for all integers $h$.
\end{proposition}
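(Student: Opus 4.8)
The plan is to iterate the recursion of Lemma~\ref{le:rec_G} to express $G_\lambda$ (and hence, via Lemma~\ref{Le0}, the average of $|H_\lambda^I(h,d)|^2$ over $d$) as a product of transfer matrices indexed by the digits of $d$, and then invoke Lemma~\ref{le:matrixnorm} to get the exponential decay. First I would set up the right vector space: for fixed $h$ and step parameter $j$ (I expect $j=m$, matching the block length, so that one digit of $q^{m-1}d$ is consumed per step), consider the vector $\bigl(G_\lambda^I(h,d)\bigr)_{I\in\mathcal{I}_k}$ indexed by the finite set $\mathcal{I}_k$. Lemma~\ref{le:rec_G} with $j$ fixed says precisely that passing from scale $\lambda$ to $\lambda-j$, while peeling off the lowest digit block $\delta$ of $d$, applies a linear map whose matrix entries are $\tfrac1{q^j}\e(-h\varepsilon q^{-\lambda})v^j(I,\varepsilon,\delta)$ connecting row $I$ to column $T_{\varepsilon,\delta}^j(I)$. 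Since $|v^j|=1$ and there are $q^j$ terms, each such matrix has absolute row sums exactly $1$, so the hypotheses on row sums in Lemma~\ref{le:matrixnorm} hold. The $h$-dependent phase $\e(-h\varepsilon q^{-\lambda})$ is a nuisance because it depends on $\lambda$; I would absorb it by working with a slightly enlarged object or, as in \cite{drmotaMauduitRivat2014}, observe that it affects only the entries connecting to column $1$ in a controlled way and does not change the row-sum structure.

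Next I would reduce the squared average to a matrix-norm statement. Writing $|H_\lambda^I(h,d)|^2 = H_\lambda^I(h,d)\overline{H_\lambda^I(h,d)}$ and using Lemma~\ref{Le0} to split each $H$ into a sum of $G_\lambda^{J_{\varepsilon,\delta}}(h,d)$, the average $\tfrac1{q^{\lambda'}}\sum_{d<q^{\lambda'}}|H_\lambda^I(h,d)|^2$ becomes an average over the digits of $d$ of a bilinear expression in the $G$'s. The standard device is to pass to a tensor-square system: the pair $\bigl(G_\lambda^{I}(h,d)\overline{G_\lambda^{I'}(h,d)}\bigr)_{I,I'}$ again satisfies a product recursion in the digits of $d$ — this time the averaging over the lowest digit $\delta$ of $d$ produces a genuine contraction, because summing $\tfrac1{q^j}\sum_\varepsilon v^j(I,\varepsilon,\delta)\overline{v^j(I',\varepsilon,\delta)}$ over the high digits is where cancellation lives. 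This is the point where the hypothesis $K\equiv 0\pmod 1$ enters: it guarantees that the ``diagonal'' block $I=I'$ does not itself carry an obstruction (a nonzero $K$ would force a separate treatment, which is exactly why \Cref{sec:nequiv0} exists), so that the only way to avoid decay would be a genuine coincidence of the relevant digit-dependent phases.

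The heart of the matter is verifying conditions~\eqref{matrixNorm1} and~\eqref{matrixNorm2} of Lemma~\ref{le:matrixnorm} for this tensor-square transfer system: I must show that over a bounded number $m_0$ (resp.\ $m_1$) of consecutive digit-steps, every row either has a large entry in the first column or has row sum bounded by $1-\eta$ (resp.\ the first row always has row sum $\le 1-\eta$). Concretely this reduces, via Lemma~\ref{le:sum_4}, to showing that one can always find two choices of the peeled digit block $\delta$ (equivalently of $\bfe_1,\bfe_2$) for which the relevant phase differences $\alpha_\ell(b(\cdots\bfe_1\cdots)-b(\cdots\bfe_2\cdots))$ are not all simultaneously integers — and this is supplied precisely by Corollary~\ref{co:diff_b} (together with $\gcd(q-1,m')=1$ and the non-divisibility hypothesis on $\{b(n)\}$), which produces $\bfe_1,\bfe_2,d$ with $d\alpha\notin\Z$. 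I expect the main obstacle to be the bookkeeping: correctly tracking how $T_{\varepsilon,\delta}^j$ moves indices in $\mathcal{I}_k$ over several steps (using the composition law $T^{j_2}\circ T^{j_1}=T^{j_1+j_2}$), and matching the abstract ``first column'' of Lemma~\ref{le:matrixnorm} with a concrete distinguished index $I\in\mathcal{I}_k$ whose associated value $G_\lambda^I$ is provably of size $\ge c_0$ (the constant block, where $b$ vanishes) — i.e.\ reproducing the combinatorial core of~\cite{drmotaMauduitRivat2014} in the block-additive setting, with the sum-of-digits case recovered when $m=1$. Once these conditions are checked, Lemma~\ref{le:matrixnorm} yields $\norminf{\prod \mathbf{M}_\ell}\le Cq^{-\delta k}$ over the digits of $d$, and since we average over $d<q^{\lambda'}$ with $\lambda'\ge\lambda/2$, this translates into the bound $\ll q^{-\eta\lambda}$ with a suitable $\eta>0$, uniformly in $h$.
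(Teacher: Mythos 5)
Your plan follows essentially the same route as the paper: reduce $H_\lambda^I$ to $G_\lambda^I$ via Lemma~\ref{Le0}, pass to the pair system $\bigl(G_\lambda^{I}(h,d)\overline{G_\lambda^{I'}(h,d)}\bigr)_{(I,I')\in\mathcal{I}_k^2}$ averaged over $d$, verify conditions~\eqref{matrixNorm1} and~\eqref{matrixNorm2} with the distinguished index $(\mathbf{0},\mathbf{0})$ and two paths whose phases are separated by Corollary~\ref{co:diff_b} (with $K\in\Z$ killing the extraneous phase on the comparison path), and conclude via Lemma~\ref{le:matrixnorm} using $\lambda'\ge\lambda/2$. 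The outline is correct and matches the paper's argument in all essentials.
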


\begin{proposition}\label{Pro2}
  If $K \not \equiv 0 (\bmod 1)$, then there exists $\eta > 0$ such that for any $I\in \mathcal{I}'_k$ 
  \begin{displaymath}
    \abs{ H_\lambda^{I}(h,d) } \ll  q^{-\eta L} \max_{J \in \mathcal{I}_k} \abs{ G_{\lambda-L}^{J}(h,\floor{ d/q^L }) } 
  \end{displaymath}
  holds uniformly for all non-negative integers $h,d$ and $L$.
\end{proposition}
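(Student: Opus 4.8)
The plan is to prove Proposition~\ref{Pro2} by iterating the recursion from Lemma~\ref{le:rec_G} together with the passage from $H$ to $G$ provided by Lemma~\ref{Le0}. First I would use Lemma~\ref{Le0} to write $H_\lambda^I(h,d)$ — after splitting $d = q^{m-1}d' + \delta$ — as an average of at most $q^{m-1}$ terms $G_\lambda^{J_{\varepsilon,\delta}}(h,d')$ with unimodular weights; since $|H_\lambda^I| \le \max_J |G_\lambda^J(h,d')|$, it suffices to prove the analogous exponential decay statement for $G$, namely $\abs{G_\lambda^I(h,d)} \ll q^{-\eta L}\max_{J\in\mathcal{I}_k}\abs{G_{\lambda-L}^J(h,\floor{d/q^L})}$ uniformly in $h,d,L$. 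Then I would iterate Lemma~\ref{le:rec_G} with $j=1$ exactly $L$ times, writing $d$ in base $q$ as $d = q^L d'' + \delta''$ with $\delta'' = d \bmod q^L$: this expresses $G_\lambda^I(h,d)$ as a sum over $\varepsilon \in \{0,\ldots,q^L-1\}$ of products of $L$ unimodular weights $v^1(\cdot)$ and phases $\e(-h\varepsilon q^{-\lambda})$ times $G_{\lambda-L}^{T^L_{\varepsilon,\delta''}(I)}(h,d'')$, where $d'' = \floor{d/q^L}$. So $G_\lambda^I(h,d)$ is an average (over $\varepsilon$) of $q^{-L}\sum_\varepsilon (\text{unimodular}) \cdot G_{\lambda-L}^{T^L_{\varepsilon,\delta''}(I)}(h,d'')$.

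The crux is that this average does not merely give the trivial bound $\max_J \abs{G_{\lambda-L}^J}$, but a genuine saving $q^{-\eta L}$. This is exactly where the hypothesis $K \not\equiv 0 \pmod 1$ enters, and it is the step I expect to be the main obstacle. The mechanism should be: group the $q^L$ summands according to the value of $T^L_{\varepsilon,\delta''}(I) \in \mathcal{I}_k$, and within each group observe that the weights $v^L(I,\varepsilon,\delta'')\e(-h\varepsilon q^{-\lambda})$, summed over the relevant $\varepsilon$'s, exhibit cancellation governed by $K$. Concretely, because $b_L$ of a number whose top digit changes by $1$ changes the argument of $v$ by something proportional to $\sum_\ell \alpha_\ell = K$ (modulo lower-order digit corrections), a block of $\varepsilon$'s sharing the same image under $T^L$ contributes a geometric-type sum in $\e(K + hq^{-\lambda}\cdot(\text{something}))$ or similar, and $\norm{K}>0$ forces a uniform loss of a factor bounded away from $1$ at each digit step. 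Rigorously organizing this is cleanest via the matrix formalism: encode the map $I \mapsto (\text{weighted distribution of } T^1_{\varepsilon,\delta}(I))$ as a family of matrices $\mathbf{M}_\ell$ (indexed by position/digit) acting on the finite set $\mathcal{I}_k$, check that the absolute row sums are $\le 1$, and verify the two hypotheses of Lemma~\ref{le:matrixnorm} — the off-diagonal-mass condition \eqref{matrixNorm1} and the strict-contraction condition \eqref{matrixNorm2} for the distinguished row — using $K\not\equiv 0$ and Lemma~\ref{le:sum_4} to quantify the cancellation. Then $\norminf{\prod \mathbf{M}_\ell} \le Cq^{-\delta L}$ gives precisely the claimed bound with $\eta = \delta$.

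In more detail: I would set $N = |\mathcal{I}_k|$ and define, for each digit step, the matrix whose $(I,J)$ entry is $\frac1q\sum_{\varepsilon : T^1_{\varepsilon,\delta}(I) = J} v^1(I,\varepsilon,\delta)\e(-h\varepsilon q^{-\lambda})$, where $\delta$ ranges over the base-$q$ digits of $d$ as we peel them off. The absolute row sums are at most $\frac1q \cdot q = 1$, so the matrices are admissible for Lemma~\ref{le:matrixnorm}. For condition \eqref{matrixNorm2}: since $K \notin \Z$, two consecutive digit-values of the phase (for $\varepsilon$ and $\varepsilon+1$ say) differ by a quantity involving $K$, so by Lemma~\ref{le:sum_4} the relevant pair of terms has combined absolute value strictly less than $2$, producing the required $1-\eta$ for a suitable product length $m_1$. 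For condition \eqref{matrixNorm1}: one must show that after $m_0$ steps every row either puts mass $\ge c_0$ in a fixed column or has row sum $\le 1-\eta$; this is the more delicate combinatorial bookkeeping, tracking how the images $T^L_{\varepsilon,\delta}(I)$ concentrate. I would expect to handle it by distinguishing $\varepsilon$ in a "generic" range (where no carries occur and the $v$-weights collapse cleanly, yielding concentration onto one $J$) from a negligible "boundary" range. Finally, chaining Lemma~\ref{le:matrixnorm} with the $H \to G$ reduction from Lemma~\ref{Le0} and the observation $\floor{(q^{m-1}d'+\delta)/q^L}$ and $\floor{d/q^L}$ agree up to an index shift absorbed into $\mathcal{I}_k$, I obtain the stated bound; the constant $\eta$ in the proposition is the $\delta$ furnished by Lemma~\ref{le:matrixnorm}. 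The single hardest point remains verifying \eqref{matrixNorm1} uniformly in $h$, $d$, and $\lambda$, because the phase $\e(-h\varepsilon q^{-\lambda})$ is genuinely $h$-dependent and one needs the cancellation to come entirely from the $v$-part (i.e. from $K\not\equiv 0$), with the $h$-phase never able to conspire to restore full mass.
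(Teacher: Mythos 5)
Your reduction from $H$ to $G$ via Lemma~\ref{Le0} and the reformulation of Lemma~\ref{le:rec_G} as a matrix--vector recursion are exactly the paper's first steps, and you correctly identify that the whole content is a per-block contraction of the transfer matrices, uniform in the phase $z=\e(-h/q^{\lambda})$. But your plan then misroutes: Lemma~\ref{le:matrixnorm}, with its dichotomy \eqref{matrixNorm1}/\eqref{matrixNorm2} and distinguished column, is the tool for the case $K\in\Z$ (Proposition~\ref{Pro1}), where some rows genuinely fail to contract and one must instead track concentration of mass. For $K\notin\Z$ the paper proves the stronger and simpler statement that \emph{every} row of $M^{m_1}_{\delta}(z)$ has absolute row sum at most $q^{m_1}-\eta'$ for all $z\in\U$ and all $\delta$, after which the product bound over $\floor{L/m_1}$ blocks is immediate; your proposed verification of \eqref{matrixNorm1} by ``concentration onto one $J$'' is both unnecessary and not how the saving arises.

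More seriously, the step you yourself flag as the main obstacle -- producing the actual cancellation -- is left unresolved, and your heuristic for it is not correct. The saving does not come from consecutive $\varepsilon$'s shifting the argument of $v$ by something proportional to $K$. One must find, for each $I$ and $\delta$, \emph{two} indices $\varepsilon_1,\varepsilon_2$ with $T^{m_1'}_{\varepsilon_i,\delta}(I)=T^{m_1'}_{\varepsilon_i+1,\delta}(I)$ (so each pair interferes inside a single matrix entry, neutralizing the $h$-dependent phase $z^{\varepsilon}$) and such that the two weight increments $\xi_1,\xi_2$ satisfy $\norm{\xi_1-\xi_2}\ge 1/m'$; Lemma~\ref{le:sum_4} then yields the loss $\eta'$. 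Constructing these $\varepsilon_i$ is where $K\notin\Z$ enters, and only indirectly: one partitions $\{0,\ldots,k-1\}$ into classes $M_{x,c}$ by the residues of $\floor{i_\ell/q^{m-1}}+d\ell$, uses $K\notin\Z$ to find a class $c_0$ with $\beta_{x,c_0}\notin\Z$ (Lemma~\ref{le:c_0}), stabilizes the partition (Lemma~\ref{le:x_0}), and invokes Corollary~\ref{co:diff_b} -- which is where the hypotheses $\gcd(m',q-1)=1$ and $\gcd(m',\gcd(\{b(n)\}))=1$ are actually consumed -- to produce $\bfe_1,\bfe_2$ with $\beta_{x_0,c_0}\,d\notin\Z$. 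Without this combinatorial construction your argument has no quantitative input from $K\notin\Z$, and the concern you raise at the end (that the $h$-phase might conspire to restore full mass) is precisely what the two-pair device exists to preclude.
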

Proofs for Proposition~\ref{Pro1} and~\ref{Pro2} are given in the following sections.

\subsection{Proof of Proposition~\ref{Pro1}}\label{sec:Pro1}
This section is dedicated to prove Proposition~\ref{Pro1}. 
We start by reducing the problem from $H_{\lambda}^{I}(h,d)$ to $G_{\lambda}^{I}(h,d)$ for which we have found a nice recursion.
\begin{proposition}\label{pro1_new}
  For $K \in \Z$ and $\frac{1}{2} \lambda \le \lambda' \le \lambda$, we find $\eta > 0$ such that for any $I \in \mathcal{I}_k$
  \begin{align*}
   \frac{1}{q^{\lambda'}} \sum_{0 \le d< q^{\lambda'}} \abs{G_{\lambda}^{I}(h,d)}^2 \ll q^{-\eta \lambda}
  \end{align*}
  holds uniformly for all integers $h$.
\end{proposition}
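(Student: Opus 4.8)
The goal is to bound the average of $\abs{G_\lambda^I(h,d)}^2$ over $d<q^{\lambda'}$, and the natural tool is the matrix-norm Lemma~\ref{le:matrixnorm}. The plan is to expand $\abs{G_\lambda^I(h,d)}^2 = G_\lambda^I(h,d)\overline{G_\lambda^I(h,d)}$ and iterate the recursion in Lemma~\ref{le:rec_G} with $j=1$, peeling off one digit $\delta$ of $d$ at a time. Writing $d = \sum_{s} \delta_s q^s$, each application of the recursion produces a factor of the form $\frac1q\sum_{\varepsilon<q} v^1(I,\varepsilon,\delta_s)\,\e(-h\varepsilon q^{-\lambda})\,G_{\lambda-1}^{T^1_{\varepsilon,\delta_s}(I)}$, so after iterating we obtain $\abs{G_\lambda^I(h,d)}^2$ as a product, over the digits $\delta_s$ of $d$, of transition matrices indexed by pairs $(I_1,I_2)\in\mathcal{I}_k^2$ acting on a state space of size $\abs{\mathcal{I}_k}^2$. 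Concretely, one defines for each $\delta\in\{0,\ldots,q-1\}$ a matrix $M_\delta$ whose $((I_1,I_2),(J_1,J_2))$ entry is $\frac1{q^2}\sum_{\varepsilon_1,\varepsilon_2} v^1(I_1,\varepsilon_1,\delta)\overline{v^1(I_2,\varepsilon_2,\delta)}\,\e(-h(\varepsilon_1-\varepsilon_2)q^{-\lambda})$ restricted to the summands with $T^1_{\varepsilon_1,\delta}(I_1)=J_1$, $T^1_{\varepsilon_2,\delta}(I_2)=J_2$. Summing the average over $d$ then amounts to replacing the product by $\rb{\frac1q\sum_\delta M_\delta}$ raised to the appropriate power, or more precisely to estimating the row-sum norm of a product of $\lambda'$ such averaged matrices; since $\lambda' \ge \lambda/2$, an exponential bound $q^{-\delta\lambda'}$ in the number of factors translates into the claimed $q^{-\eta\lambda}$.

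\textbf{Verifying the hypotheses of Lemma~\ref{le:matrixnorm}.} The absolute-row-sum condition $\le 1$ is immediate since the entries of $M_\delta$ are averages of unimodular quantities and $\abs{v^1}=1$. The real work is condition~\eqref{matrixNorm2} (and~\eqref{matrixNorm1}): I must show that every product of $m_1$ consecutive averaged matrices has a distinguished row with absolute row sum $\le 1-\eta$. The row sum of such a product measures, for a fixed starting state, how much cancellation occurs in $\frac1{q^{2r}}\sum_{\varepsilon_1,\varepsilon_2}\e\rb{\text{(difference of }b\text{-values)}}$ over a block of $r$ consecutive digits; a row sum close to $1$ forces the phase $\sum_{\ell<k}\alpha_\ell\rb{b_r(\cdots)-b_r(\cdots)}$ to be essentially constant modulo $1$ as $\varepsilon_1,\varepsilon_2$ vary. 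This is exactly where $K\equiv 0\bmod 1$ is used: when $K\in\Z$, the "carry" contributions telescope, and one needs the structural non-constancy of $b$. The key input here is Lemma~\ref{le:complicated_b_not_const} / Corollary~\ref{co:diff_b}: the hypotheses $\gcd(m',q-1)=1$ and $\gcd(m',\gcd\{b(n)\})=1$ guarantee there exist digit configurations $\bfe_1,\bfe_2$ producing $b$-differences that are distinct modulo the relevant modulus, hence the corresponding phases differ, hence genuine cancellation. Combined with Lemma~\ref{le:sum_4} to quantify the cancellation of two unimodular terms with different arguments, this should yield a uniform $\eta>0$ (uniform in $h$, since $h$ only contributes an overall phase $\e(-h(\varepsilon_1-\varepsilon_2)q^{-\lambda})$ that can be absorbed or handled by splitting on whether the $\e(-huq^{-\lambda})$ frequency is "resonant").

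\textbf{The main obstacle.} The delicate point is making the combinatorics of the state space $\mathcal{I}_k^2$ and the transformations $T^1_{\varepsilon,\delta}$ cooperate with the non-constancy furnished by Corollary~\ref{co:diff_b}: one has found \emph{some} pair of digit blocks giving different phases, but to feed Lemma~\ref{le:matrixnorm} one needs that from \emph{every} state, within a bounded number $m_1$ of steps, the process can reach a configuration realizing such a difference — i.e. an irreducibility/accessibility statement for the automaton on $\mathcal{I}_k^2$. Establishing this reachability uniformly, and checking that the resulting cancellation survives the presence of the arbitrary integer $h$ and the constraint $\lambda'\ge\lambda/2$ (so that losing the top $\lambda-\lambda'$ digits is harmless), is the technical heart. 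I expect to handle it by first treating the diagonal-type obstruction where $I_1$ and $I_2$ stay "parallel" (here $K\in\Z$ and Corollary~\ref{co:diff_b} with an appropriate $d$ force a phase difference), and then noting that off-diagonal states only help. Once condition~\eqref{matrixNorm2} is in place, \eqref{matrixNorm1} follows from the same cancellation estimate with $c_0$ chosen small, and Lemma~\ref{le:matrixnorm} delivers $q^{-\delta k}$ with $k\asymp\lambda'\asymp\lambda$, completing the proof after summing the geometric contributions from the remaining digits of $d$ beyond position $\lambda'$.
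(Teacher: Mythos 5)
Your plan follows the paper's proof almost exactly: the paper also forms $\Phi_{\lambda,\lambda'}^{I,I'}(h)=q^{-\lambda'}\sum_d G_\lambda^I(h,d)\overline{G_\lambda^{I'}(h,d)}$, derives from Lemma~\ref{le:rec_G} the vector recursion $\Psi_{\lambda,\lambda'}(h)=\mathbf{M}(h/q^{\lambda})\Psi_{\lambda-1,\lambda'-1}(h)$ on the state space $\mathcal{I}_k^2$ with the $q^{-3}$-normalized transition weights you describe, and then feeds the product $\mathbf{M}(h/q^{\lambda})\cdots\mathbf{M}(h/q^{\lambda-\lambda'+1})$ into Lemma~\ref{le:matrixnorm}, with Corollary~\ref{co:diff_b} and Lemma~\ref{le:sum_4} supplying the cancellation and $\lambda'\ge\lambda/2$ converting $q^{-\delta\lambda'}$ into $q^{-\eta\lambda}$. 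So the route is the right one and all the key inputs are correctly identified.

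What is missing is precisely the step you defer: the explicit construction that makes condition~\eqref{matrixNorm2} hold. Two remarks. First, the reachability problem is easier than you fear, because Lemma~\ref{le:matrixnorm} does \emph{not} require that every state reach a cancelling configuration: condition~\eqref{matrixNorm1} only needs every row to have either a small row sum or a large entry in the column of the distinguished state $(\mathbf{0},\mathbf{0})$, and this follows from the trivial identity $T^{m_0}_{0,0}(I)=\mathbf{0}$ for $m_0=m-1+\lceil\log_q(k+1)\rceil$ (no cancellation estimate is involved, contrary to your last sentence); the genuine cancellation is only needed in the single row indexed by $(\mathbf{0},\mathbf{0})$. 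Second, producing that cancellation requires an explicit pair of closed paths at $(\mathbf{0},\mathbf{0})$ of equal length $m_1$ whose weights differ by a nontrivial root of unity \emph{with the same $h$-phase}. The paper does this by routing one path through the state $I_0=(q^{m-1}-1,\ldots,q^{m-1}-1,q^{m-1},\ldots,q^{m-1})$ (reached via $T^{n_1}_{q^{n_1}-n_0-1,1}(\mathbf{0})=I_0$) and then back to $\mathbf{0}$ via digit blocks $\bfe_1,\bfe_2<q^{2m-1}$; the hypothesis $K\in\Z$ collapses the weight of this path to $\e(\alpha_{n_0}(b(q^{m-1}(\bfe+1)-1)-b(q^{m-1}(\bfe+1))))$ times the $h$-phase, and Corollary~\ref{co:diff_b} chooses $\bfe_1,\bfe_2$ so that the two weights differ by $\e(\alpha_{n_0}d)$ with $\alpha_{n_0}d\notin\Z$; comparing against the second path $(\bfe_1 q^{n_1},\bfe_2 q^{n_1},0)$, whose weight is exactly the same $h$-phase, gives $|1+\e(\alpha_{n_0}d)|\le 2-\eta$ uniformly in $h$. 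Without this construction (or an equivalent one) the argument is not complete, since the whole point of the proposition is that this saving exists and is uniform in $h$.
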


\begin{lemma}
 Proposition \ref{pro1_new} implies Proposition \ref{Pro1}.
\end{lemma}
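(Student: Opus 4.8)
The plan is to reduce the averaged bound for $H_\lambda^{I}$ to the averaged bound for $G_\lambda^{J}$ provided by Proposition~\ref{pro1_new}, using the recursion of Lemma~\ref{Le0}. Fix $I\in\mathcal{I}'_k$ and an integer $h$, and keep the standing assumption $K\in\Z$ (present in both statements). For $\lambda$ below any fixed threshold — in particular whenever $\lambda'<m-1$ — the claimed inequality is trivial, since $\abs{H_\lambda^{I}(h,d)}\le 1$ always while $q^{-\eta\lambda}$ is bounded below there, so it suffices to enlarge the implied constant; hence assume $\lambda'\ge m-1$. Then every $d$ with $0\le d<q^{\lambda'}$ has a unique representation $d=q^{m-1}d'+\delta$ with $0\le d'<q^{\lambda'-m+1}$ and $0\le\delta<q^{m-1}$, so that
\begin{align*}
\sum_{0\le d<q^{\lambda'}}\abs{H_\lambda^{I}(h,d)}^2
=\sum_{0\le\delta<q^{m-1}}\;\sum_{0\le d'<q^{\lambda'-m+1}}\abs{H_\lambda^{I}(h,q^{m-1}d'+\delta)}^2 .
\end{align*}

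Next I would apply Lemma~\ref{Le0} to each term, followed by the triangle inequality (each factor $\e(-h\varepsilon q^{-\lambda-m+1})$ has modulus $1$) and Cauchy--Schwarz in the form $\rb{q^{1-m}\sum_\varepsilon a_\varepsilon}^2\le q^{1-m}\sum_\varepsilon a_\varepsilon^2$, which yields
\begin{align*}
\abs{H_\lambda^{I}(h,q^{m-1}d'+\delta)}^2\le\frac1{q^{m-1}}\sum_{0\le\varepsilon<q^{m-1}}\abs{G_\lambda^{J_{\varepsilon,\delta}}(h,d')}^2 ,
\end{align*}
where $J_{\varepsilon,\delta}=J_{\varepsilon,\delta}(I)\in\mathcal{I}_k$ by Lemma~\ref{Le0}. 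Summing over $\delta$ and $d'$ and interchanging the order of summation, the problem is reduced to estimating $\sum_{0\le d'<q^{\lambda'-m+1}}\abs{G_\lambda^{J}(h,d')}^2$ for each of the finitely many $J\in\mathcal{I}_k$.

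Here Proposition~\ref{pro1_new} enters. Since $m$ is a fixed constant, $\lambda'-m+1$ may dip slightly below $\frac12\lambda$, so I would first enlarge the range and set $\lambda'':=\max\rb{\lambda'-m+1,\ceil{\lambda/2}}$; then $\frac12\lambda\le\lambda''\le\lambda'\le\lambda$ (using $\lambda'\ge\ceil{\lambda/2}$, valid because $\lambda'$ is an integer with $\lambda'\ge\frac12\lambda$), so Proposition~\ref{pro1_new} applied with $\lambda''$ in place of $\lambda'$ gives $\sum_{0\le d'<q^{\lambda''}}\abs{G_\lambda^{J}(h,d')}^2\ll q^{\lambda''}q^{-\eta\lambda}$, with implied constant independent of $h$ and of $J$ (the latter since $\mathcal{I}_k$ is finite). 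Combining everything,
\begin{align*}
\frac1{q^{\lambda'}}\sum_{0\le d<q^{\lambda'}}\abs{H_\lambda^{I}(h,d)}^2\ll q^{m-1}\,q^{\lambda''-\lambda'}\,q^{-\eta\lambda}\ll q^{-\eta\lambda},
\end{align*}
because $q^{m-1}$ is a constant and $\lambda''-\lambda'\le 0$. This is exactly Proposition~\ref{Pro1}, with the same $\eta$.

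I do not expect a serious obstacle: all the substance sits in Proposition~\ref{pro1_new}, and the present argument is bookkeeping around Lemma~\ref{Le0}. The two points needing a little care are (a) checking that after the split $d=q^{m-1}d'+\delta$ the range of $d'$ can be compared with a range $[0,q^{\lambda''})$ with $\lambda''\in[\frac12\lambda,\lambda]$, so that Proposition~\ref{pro1_new} truly applies, and (b) verifying that the fixed-size losses — the prefactor $q^{m-1}$ from the $q^{m-1}$ inner sums and the shift of the exponent by $m-1$ — are harmless, which holds because $m$ is fixed and $\lambda''\le\lambda'$.
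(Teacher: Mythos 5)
Your proof is correct and follows essentially the same route as the paper: reduce $H_\lambda^{I}$ to the $G_\lambda^{J}$ via the recursion of Lemma~\ref{Le0}, bound over the finitely many $J\in\mathcal{I}_k$, and invoke Proposition~\ref{pro1_new}. Your only additions are cosmetic refinements (Cauchy--Schwarz instead of a crude max, and the explicit adjustment $\lambda''=\max(\lambda'-m+1,\ceil{\lambda/2})$ to keep the range of $d'$ within the hypotheses of Proposition~\ref{pro1_new}, a boundary point the paper passes over silently).
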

\begin{proof}
 We see by \eqref{eq:H-recursion} that
 \begin{align*}
    \abs{H_{\lambda}^{I}(h,d)}^2 \leq \max_{J\in \mathcal{I}_k} \abs{G_{\lambda}^{J}(h,\floor{d/q^{m-1}})}^2 
        \leq \sum_{J\in \mathcal{I}_k} \abs{G_{\lambda}^{J}(h,\floor{d/q^{m-1}})}^2.
 \end{align*}
 Thus we find
 \begin{align*}
    \frac{1}{q^{\lambda'}} \sum_{0 \le d< q^{\lambda'}} \abs{H_{\lambda}^{I}(h,d)}^2 \leq 
	  \sum_{J\in \mathcal{I}_k} \frac{1}{q^{\lambda'}} \sum_{0 \le d< q^{\lambda'}} \abs{G_{\lambda}^{J}(h,\floor{d/q^{m-1}})}^2
      \ll q^{-\eta \lambda}.
 \end{align*}
\end{proof}

Using Lemma~\ref{le:rec_G}, it is easy to establish a recursion for 
\begin{align*}
  \Phi_{\lambda, \lambda'}^{I, I'}(h) = \frac{1}{q^{\lambda'}} \sum_{0 \leq d < q^{\lambda'}} \G_{\lambda}^{I}(h,d) \overline{\G_{\lambda}^{I'}(h,d)},
\end{align*}
where $h\in\Z$, $(\lambda,\lambda')\in\N^2$ and  $(I,I')\in\mathcal{I}_k^2$.
For $\lambda,\lambda'\geq 1$ and $1\leq j\leq \min(\lambda,\lambda')$ it yields for $\Phi_{\lambda, \lambda'}^{I,I'}(h)$ the following expression
\begin{align*}
   \frac{1}{q^{3j}} \sum_{\delta <q^j} \sum_{\varepsilon_1 <q^j} \sum_{\varepsilon_2 <q^j} \e\rb{-\frac{(\varepsilon_1-\varepsilon_2)h}{q^{\lambda}}} 
	v^j(I,\varepsilon_1,\delta) \overline{v^j(I,\varepsilon_2,\delta)} \Phi_{\lambda-j,\lambda'-j}^{T^j_{\varepsilon_1, \delta}(I), T^j_{\varepsilon_2, \delta}(I')}(h).
\end{align*}
To find this recursion, one has to split up the sum over $0 \leq d < q^{\lambda'}$ into the equivalence classes modulo $q^j$. 

This identity gives rise to a vector recursion for $\Psi_{\lambda, \lambda'}(h) = \rb{\Phi_{\lambda, \lambda'}^{I,I'}(h)}_{(I,I')\in \mathcal{I}_k^2}$.
We use the recursion for $j=1$:
\begin{align*}
	\Psi_{\lambda, \lambda'}(h) = \mathbf{M}(h/q^{\lambda}) \cdot \Psi_{\lambda-1, \lambda'-1}(h)
\end{align*}
where the $2^{2(k-1)} \times 2^{2(k-1)}$-matrix $\mathbf{M}(\beta) = (M_{(I,I'),(J,J')}(\beta))_{((I,I'),(J,J')) \in \mathcal{I}_k^2 \times \mathcal{I}_k^2}$ 
is independent of $\lambda$ and $\lambda'$. By construction, all absolute row sums of $\textbf{M}(\beta)$ are bounded by $1$.

It is useful to interpret these matrices as weighted directed graphs.
The vertices are the pairs $(I,I') \in \mathcal{I}_k^2$ and, starting 
from each vertex, there are $q^3$ directed edges to the vertices $(\T_{\varepsilon_1, \delta}(I),\T_{\varepsilon_2, \delta}(I'))$
 - where $(\delta, \varepsilon_1,\varepsilon_2)\in\{0,\ldots,q-1\}^3$ - with corresponding weights 
\begin{align*}
\frac{1}{q^3} \e\rb{-\frac{(\varepsilon_1-\varepsilon_2)h}{q^{\lambda}}} v^1(I,\varepsilon_1,\delta) \overline{v^1(I',\varepsilon_2,\delta)}.
\end{align*}
Products of $j$ such matrices correspond to oriented paths of length $j$ in these graphs, which are weighted with the corresponding products. 
The entries at position $((I,I'),(J,J'))$ of such product matrices correspond to the sum of weights along paths from $(I,I')$ to $(J,J')$.
Lemma~\ref{le:rec_G} allows us to describe this product of matrices directly.

\begin{lemma}
  The entry $((I,I'),(J,J'))$ of $\mathbf{M}(h/q^{\lambda}) \cdot \mathbf{M}(h/q^{\lambda-1}) \cdot \ldots \cdot \mathbf{M}(h/q^{\lambda-j+1})$ equals
  \begin{align*}
    \frac{1}{q^{3j}} \sum_{\delta<q^j} \sum_{\varepsilon_1,\varepsilon_2 < q^j} \ind_{[T_{\varepsilon_1, \delta}^{j}(I) = J]} \ind_{[T_{\varepsilon_2, \delta}^{j}(I') = J']}
	    v^j(I,\varepsilon_1,\delta) \overline{v^j(I',\varepsilon_2,\delta)} \e\rb{-\frac{(\varepsilon_1 - \varepsilon_2)h}{q^{\lambda}}}.
  \end{align*}
\end{lemma}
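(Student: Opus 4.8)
The plan is to prove the identity by induction on $j$, using the two structural facts already at our disposal: the cocycle relation $T^{j_2}_{\varepsilon_2,\delta_2}\circ T^{j_1}_{\varepsilon_1,\delta_1}=T^{j_1+j_2}_{\varepsilon_2 q^{j_1}+\varepsilon_1,\,\delta_2 q^{j_1}+\delta_1}$ for the transformations $T^j$, and a multiplicativity property of the weights $v^j$. The case $j=1$ is immediate: the $((I,I'),(J,J'))$ entry of $\mathbf{M}(h/q^\lambda)$ is by definition the sum of the edge weights $\frac1{q^3}\e\rb{-(\varepsilon_1-\varepsilon_2)h/q^\lambda}v^1(I,\varepsilon_1,\delta)\overline{v^1(I',\varepsilon_2,\delta)}$ over all $(\delta,\varepsilon_1,\varepsilon_2)\in\{0,\ldots,q-1\}^3$ with $T^1_{\varepsilon_1,\delta}(I)=J$ and $T^1_{\varepsilon_2,\delta}(I')=J'$, which is exactly the claimed expression for $j=1$.

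For the inductive step I would split off the leftmost matrix, $\mathbf{M}(h/q^\lambda)\cdots\mathbf{M}(h/q^{\lambda-j+1})=\mathbf{M}(h/q^\lambda)\cdot\big(\mathbf{M}(h/q^{\lambda-1})\cdots\mathbf{M}(h/q^{\lambda-j+1})\big)$, apply the induction hypothesis to the bracketed product of $j-1$ matrices (with $\lambda$ replaced by $\lambda-1$ and $j$ by $j-1$), and expand the remaining matrix product over the intermediate vertex $(\tilde I,\tilde I')$. The two indicators $\ind_{[T^1_{\varepsilon_0,\delta_0}(I)=\tilde I]}$, $\ind_{[T^1_{\varepsilon_0',\delta_0}(I')=\tilde I']}$ coming from the outer factor collapse the sum over $(\tilde I,\tilde I')$, so one is left with a sum over a ``low'' triple $(\delta_0,\varepsilon_0,\varepsilon_0')\in\{0,\ldots,q-1\}^3$ and a ``high'' triple $(\tilde\delta,\tilde\varepsilon_1,\tilde\varepsilon_2)\in\{0,\ldots,q^{j-1}-1\}^3$, with $\tilde I=T^1_{\varepsilon_0,\delta_0}(I)$ and $\tilde I'=T^1_{\varepsilon_0',\delta_0}(I')$ substituted in the terms inherited from the induction hypothesis.

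What remains is to merge these into a single triple via $\delta=\tilde\delta q+\delta_0$, $\varepsilon_1=\tilde\varepsilon_1 q+\varepsilon_0$, $\varepsilon_2=\tilde\varepsilon_2 q+\varepsilon_0'$ — a bijection onto $\{0,\ldots,q^j-1\}^3$, with $q^{-3}\,q^{-3(j-1)}=q^{-3j}$ — and to check three compatibilities. (i) Because the $\delta$-digits are shared by the unprimed and primed coordinates, the cocycle relation, applied with $j_1=1$, $j_2=j-1$ once to $I$ and once to $I'$, turns $T^{j-1}_{\tilde\varepsilon_1,\tilde\delta}\circ T^1_{\varepsilon_0,\delta_0}$ into $T^j_{\varepsilon_1,\delta}$ and $T^{j-1}_{\tilde\varepsilon_2,\tilde\delta}\circ T^1_{\varepsilon_0',\delta_0}$ into $T^j_{\varepsilon_2,\delta}$, so the product of indicators becomes $\ind_{[T^j_{\varepsilon_1,\delta}(I)=J]}\ind_{[T^j_{\varepsilon_2,\delta}(I')=J']}$. (ii) The phases add correctly: $\frac{\varepsilon_0-\varepsilon_0'}{q^\lambda}+\frac{\tilde\varepsilon_1-\tilde\varepsilon_2}{q^{\lambda-1}}=\frac{\varepsilon_1-\varepsilon_2}{q^\lambda}$. (iii) The weights are multiplicative, $v^{a+b}(I,\varepsilon,\delta)=v^a(I,\varepsilon\bmod q^a,\delta\bmod q^a)\,v^b\big(T^a_{\varepsilon,\delta}(I),\floor{\varepsilon/q^a},\floor{\delta/q^a}\big)$; this follows from the decomposition $b_{a+b}(n)=b_a(n)+b_b(\floor{n/q^a})$ (a rereading of Lemma~\ref{le:rec_b}), the $q^{a+m-1}$-periodicity of $b_a$, and $\floor{(n+q^{a+m-1}t)/q^a}=\floor{n/q^a}+q^{m-1}t$, evaluated at $n=i_\ell+q^{m-1}(\varepsilon+\ell\delta)$. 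With $a=1$, $b=j-1$ this yields $v^1(I,\varepsilon_0,\delta_0)\,v^{j-1}(\tilde I,\tilde\varepsilon_1,\tilde\delta)=v^j(I,\varepsilon_1,\delta)$ and, conjugated, the corresponding identity for the primed data. Substituting (i)--(iii) back reproduces the asserted formula. Alternatively, one could skip the induction and directly unfold the product into a sum over directed paths of length $j$, applying the cocycle relation and the multiplicativity of $v$ to pass to the concatenated digit strings $\varepsilon_1=\sum_{t}\varepsilon_{1,t}q^t$, $\varepsilon_2=\sum_{t}\varepsilon_{2,t}q^t$, $\delta=\sum_{t}\delta_t q^t$.

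The argument is entirely mechanical, and the only place that genuinely needs care is this merging step: one must remember that the single variable $\delta$ (resp.\ $\tilde\delta$) feeds both the unprimed and the primed transformation, so the cocycle relation has to be invoked ``diagonally'', and one must verify the multiplicativity of $v^j$, which is where Lemma~\ref{le:rec_b} and the periodicity built into the truncated functions $b_\lambda$ come in. No new idea beyond those already present in the preceding lemmas is needed.
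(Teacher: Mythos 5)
Your proof is correct and follows essentially the same route as the paper, which simply states that the identity ``follows directly by Lemma~\ref{le:rec_G}'': expanding the matrix product edge by edge and combining the composition law for $T^j$ with the multiplicativity of the weights $v^j$ (both consequences of the recursion $b_{a+b}(n)=b_a(n)+b_b(\floor{n/q^a})$ and the periodicity of $b_a$). Your induction on $j$, with the base case read off from the definition of $\mathbf{M}(\beta)$ and the merging of digit strings $\delta=\tilde\delta q+\delta_0$, $\varepsilon_i=\tilde\varepsilon_i q+\varepsilon_{0}^{(i)}$, is just a careful write-up of exactly that argument.
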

\begin{proof}
  Follows directly by Lemma~\ref{le:rec_G}.
\end{proof}

This product of matrices corresponds to oriented paths of length $j$. 
They can be encoded by the triple $(\varepsilon_1, \varepsilon_2, \delta)$ and they correspond to a path from $(I,I')$ to 
$(T_{\varepsilon_1, \delta}^{j}(I), T_{\varepsilon_2, \delta}^{j}(I'))$ with unimodular weight 
$v^j(I,\varepsilon_1,\delta) \overline{v^j(I',\varepsilon_2,\delta)} \e\rb{-\frac{(\varepsilon_1 - \varepsilon_2)h}{q^{\lambda}}}$.

To simplify further computations we define
\begin{align*}
  n_{(I,I'),(J,J')}^{(j)} := \sum_{\delta<q^j} \sum_{\varepsilon_1,\varepsilon_2 < q^j} \ind_{[T_{\varepsilon_1, \delta}^{j}(I) = J]} \ind_{[T_{\varepsilon_2, \delta}^{j}(I') = J']}
\end{align*}
and find directly that
\begin{align*}
  \sum_{(J,J') \in \mathcal{I}_{k}^2} n_{(I,I'),(J,J')}^{(j)} = q^{3j}
\end{align*}
and the absolute value of the entry $((I,I'),(J,J'))$ of $\mathbf{M}(h/q^{\lambda}) \cdot \mathbf{M}(h/q^{\lambda-1}) \cdot \ldots \cdot \mathbf{M}(h/q^{\lambda-j+1})$
is bounded by $n_{(I,I'),(J,J')}^{(j)} q^{-3j}$.

In order to prove Proposition~\ref{Pro1}, we will use Lemma~\ref{le:matrixnorm} uniformly for $h$ with $\mathbf{M}_l = \mathbf{M}(h/q^l)$. 
Therefore, we need to check Conditions~\eqref{matrixNorm1} and~\eqref{matrixNorm2}. 

Note that, since $\frac{1}{2} \lambda \leq \lambda' \leq \lambda$, we have
\begin{align*}
  \Psi_{\lambda, \lambda'}(h) = \mathbf{M}(h/q^{\lambda}) \cdots \mathbf{M}(h/q^{\lambda-\lambda'+1}) \Psi_{\lambda-\lambda',0}(h).
\end{align*}

\begin{lemma}
  The matrices $M_{l}$ defined above fulfill Condition~\eqref{matrixNorm1} of Lemma~\ref{le:matrixnorm}.
\end{lemma}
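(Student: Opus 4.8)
The plan is to verify that every product $\mathbf{A}$ of $m_0$ consecutive matrices $\mathbf{M}_\ell$, for a suitable fixed $m_0$, satisfies the dichotomy in \eqref{matrixNorm1}: for each row $(I,I')$, either the first column entry $A_{(I,I'),(J_0,J_0')}$ has absolute value at least some $c_0 > 0$ (where $(J_0,J_0')$ denotes the distinguished ``first'' vertex, presumably the pair corresponding to the all-zero index vector), or the full absolute row sum is at most $1 - \eta$. The key observation is that the absolute row sum of $\mathbf{A}$ is $1$ precisely when no cancellation occurs among the unimodular weights $v^j(I,\varepsilon_1,\delta)\overline{v^j(I',\varepsilon_2,\delta)}\e(-(\varepsilon_1-\varepsilon_2)h/q^\lambda)$ that get lumped together into the same entry $((I,I'),(J,J'))$; so the strategy splits into two regimes depending on whether such collisions force cancellation or not.

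First I would use the combinatorial structure of the maps $T^{m_0}_{\varepsilon,\delta}$: since $T^{m_0}_{\varepsilon,\delta}(I)$ only depends on $\varepsilon,\delta$ through a bounded window of digits, after $m_0 \approx 2m$ steps the map $(\varepsilon,\delta)\mapsto T^{m_0}_{\varepsilon,\delta}(I)$ becomes ``eventually constant'' in the high digits, so that for a positive proportion of triples $(\varepsilon_1,\varepsilon_2,\delta)$ we have $T^{m_0}_{\varepsilon_1,\delta}(I) = T^{m_0}_{\varepsilon_2,\delta}(I') = (J_0,J_0')$ — the fixed point where all indices are driven to their stable value. This gives a lower bound $n^{(m_0)}_{(I,I'),(J_0,J_0')} \geq c\, q^{3m_0}$ for an absolute constant $c>0$. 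Then the entry $A_{(I,I'),(J_0,J_0')}$ is a sum of $\geq c\,q^{3m_0}$ unimodular terms, each divided by $q^{3m_0}$; if the phases $v^{m_0}(I,\varepsilon_1,\delta)\overline{v^{m_0}(I',\varepsilon_2,\delta)}\e(-(\varepsilon_1-\varepsilon_2)h/q^\lambda)$ over this sub-collection are all sufficiently aligned, we directly get $|A_{(I,I'),(J_0,J_0')}| \geq c_0$ and the first alternative holds.

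The remaining case is when these phases are \emph{not} aligned, i.e. there exist two triples mapping to the same target with genuinely different weights. Here I would invoke Lemma~\ref{le:sum_4}: pairing up two such terms $\e(x_1)+\e(x_1+\xi_1)$ and $\e(x_2)+\e(x_2+\xi_2)$ with $\norm{\xi_1-\xi_2}$ bounded below by a constant (this is exactly where Corollary~\ref{co:diff_b} enters — it produces, for any nonzero $\alpha_\ell$-configuration, indices $\mathbf{e}_1,\mathbf{e}_2$ and a shift $d$ realizing a difference $d\alpha \notin \Z$ in the $b$-values, hence a phase gap bounded away from $0$ in $\R/\Z$), we obtain that the combined contribution of that quadruple of terms to the row sum is at most $4 - 8\sin^2(\pi\norm{\xi_1-\xi_2}/4) \le 4 - \eta'$ rather than $4$. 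Summing over all $q^{3m_0}$ terms of the row, this deficit of $\eta'$ in one quadruple yields an absolute row sum $\le 1 - \eta$ with $\eta = \eta'/q^{3m_0}$, which is the second alternative. The main obstacle is the bookkeeping in the first regime: one must choose $m_0$ large enough (essentially $m_0 = 2m-1$, so that the window of influenced digits is fully ``flushed'') and identify precisely which target vertex $(J_0,J_0')$ plays the role of the first column, then show the proportion of triples hitting it is genuinely positive and uniform in $h$ — the uniformity in $h$ is automatic since $|\e(-(\varepsilon_1-\varepsilon_2)h/q^\lambda)| = 1$, but one must be careful that the $h$-dependent phases do not themselves create cancellation in the favorable case, which is handled by passing to a further sub-collection of triples on which $\varepsilon_1 = \varepsilon_2$ (killing the $h$-phase entirely) while still keeping a positive proportion.
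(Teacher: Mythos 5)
Your proposal contains a genuine gap in the second regime, and it also misses the much softer argument that actually closes the proof. The problem is your dichotomy ``phases aligned / phases not aligned''. In the non-aligned case you only know that \emph{some} two colliding terms have \emph{different} phases; you do not get a lower bound on $\norm{\xi_1-\xi_2}$ that is uniform in $(I,I')$, $h$ and the position in the product. Lemma~\ref{le:sum_4} then yields a saving $8\sin^2(\pi\norm{\xi_1-\xi_2}/4)$ that can be arbitrarily small, so you cannot extract a fixed $\eta>0$. Corollary~\ref{co:diff_b} does produce a phase gap bounded below by $1/m'$, but only for two very specific paths built through the special vertex $I_0$ starting and ending at $(\mathbf 0,\mathbf 0)$; that construction is exactly what the paper uses for Condition~\eqref{matrixNorm2} (the first-row bound), and it is not available for an arbitrary row $(I,I')$, an arbitrary target $(J,J')$, and arbitrary $h$ as Condition~\eqref{matrixNorm1} requires. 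Your proposed repair (restricting to triples with $\varepsilon_1=\varepsilon_2$) does not help either: discarding terms from the sum defining $A_{(I,I'),(\mathbf 0,\mathbf 0)}$ cannot lower-bound its absolute value, since the discarded terms may cancel the retained ones.

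The paper's proof avoids all of this. One only needs $n^{(m_0)}_{(I,I'),(\mathbf 0,\mathbf 0)}\ge 1$, which follows from $T^{m_0}_{0,0}(I)=\mathbf 0$ for every $I\in\mathcal I_k$ with $m_0=m-1+\ceil{\log_q(k+1)}$ (no positive-proportion count is needed). Then the dichotomy is purely a mass-counting argument: the total mass of the $q^{3m_0}$ unimodular terms in a row is exactly $1$, so if the absolute row sum exceeds $1-q^{-3m_0}$, there is almost no cancellation anywhere in the row; in particular the first-column entry, which receives $n\ge 1$ terms of size $q^{-3m_0}$ each, must satisfy $|A_{(I,I'),(\mathbf 0,\mathbf 0)}|\ge q^{-3m_0}/2$ (otherwise $n\ge 2$ and the triangle inequality forces the row sum below $1-q^{-3m_0}$, a contradiction). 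This gives $c_0=q^{-3m_0}/2$ and $\eta=q^{-3m_0}$ with no phase analysis at all. You should reserve the Lemma~\ref{le:sum_4}/Corollary~\ref{co:diff_b} machinery for the companion lemma on Condition~\eqref{matrixNorm2}, where a guaranteed saving in the first row genuinely requires it.
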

\begin{proof}
We need to show that there exists an integer $m_0 \geq 1$ such that every product
\begin{align*}
\mathbf{A} = (A_{(I,I'),(J,J')})_{((I,I'),(J,J')) \in \mathcal{I}_k^2 \times \mathcal{I}_k^2}
\end{align*}
of $m_0$ consecutive matrices $\mathbf{M}_l=\mathbf{M}(h/q^l)$ verifies Condition~\eqref{matrixNorm1} of Lemma~\ref{le:matrixnorm}. 

We define $m_0 = m-1 + \ceil{\log_{q}(k+1)}$.
It follows directly from the definition, that $T_{0,0}^{m_0}(I) = \mathbf{0}$ 
for all $I \in \mathcal{I}_k$. 
In the graph interpretation this means that for every vertex $(I,I')$ there is a path of length $m_0$ from $(I,I')$ to $(\mathbf{0},\mathbf{0})$. 
Fix a row indexed by $(I,I')$ in the matrix $\mathbf{A}$. 
We already showed that the entry $A_{(I,I'),({\mathbf 0},{\mathbf 0})}$ is the sum of at least one term of absolute value $q^{-3 m_0}$, i.e., 
$n_{(I,I'),(\mathbf{0},\mathbf{0})}^{(m_0)} \geq 1$.

There are two possible cases. If the absolute row sum is at most
\begin{align*}
\le 1 - \eta
\end{align*}
with $\eta \leq q^{-3m_0}$ 
then we are done.

In case the absolute row sum is strictly greater than $1 - \eta$,
we show that $|A_{(I,I'),({\mathbf 0},{\mathbf 0})}| \ge q^{-3m_0}/2$:
The inequality $|A_{(I,I'),({\mathbf 0},{\mathbf 0})}| < q^{-3m_0}/2$ implies that $A_{(I,I'),({\mathbf 0},{\mathbf 0})}$
is the sum of at least two terms of absolute value $q^{-3m_0}$, i.e. $n_{(I,I'),(\mathbf{0},\mathbf{0})}^{(m_0)} \geq 2$. 
Thus, we can use the triangle inequality to bound the absolute row sum by
\begin{displaymath}
\sum_{(J,J')} | A_{(I,I'),(J,J')} |  \leq \abs{A_{(I,I'),(\mathbf{0},\mathbf{0})}} + q^{-3 m_0} \sum_{(J,J') \neq (\mathbf{0},\mathbf{0})} n_{(I,I'),(J,J')}^{(m_0)}.
\end{displaymath}
Since 
\begin{align*}
  \sum_{(J,J')} n_{(I,I'),(J,J')}^{(m_0)} = q^{3 m_0}
\end{align*}
we find
\begin{align*}
  \sum_{(J,J')} | A_{(I,I'),(J,J')} | &\leq \abs{A_{(I,I'),(\mathbf{0},\mathbf{0})}} + 1 - q^{-3 m_0} n_{(I,I'),(\mathbf{0},\mathbf{0})}^{(m_0)}\\
      &\leq q^{-3 m_0}/2 + 1 - 2 q^{-3 m_0} < 1 - q^{-3 m_0}.
\end{align*}

This contradicts the assumption that the absolute row sum is strictly greater than 
\begin{align*}
1 - \eta \geq 1-q^{-3m_0}.
\end{align*}
Consequently, we find
\begin{align*}
|A_{(I,I'),(\mathbf{0},\mathbf{0})}| \geq c_0 \text{ for } c_0 = q^{-3m_0}/2.
\end{align*}
\end{proof}

\begin{lemma}
	The matrices $M_{l}$ fulfill Condition~\eqref{matrixNorm2} of Lemma~\ref{le:matrixnorm}.
\end{lemma}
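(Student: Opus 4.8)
The plan is to apply Lemma~\ref{le:matrixnorm} with the distinguished row and column both taken to be the vertex $(\mathbf{0},\mathbf{0})\in\mathcal{I}_k^2$; this is the natural choice, since $(\mathbf{0},\mathbf{0})$ is precisely the vertex reachable from every vertex, the role it played as ``column $1$'' when Condition~\eqref{matrixNorm1} was verified. So I need an integer $m_1\ge1$ and an $\eta>0$ such that, for every $\lambda\ge m_1$ and every integer $h$, the $(\mathbf{0},\mathbf{0})$-row of $\mathbf{B}=\mathbf{M}(h/q^{\lambda})\cdots\mathbf{M}(h/q^{\lambda-m_1+1})$ has absolute sum $\le1-\eta$. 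By the product formula established above, $\sum_{(J,J')}\abs{B_{(\mathbf{0},\mathbf{0}),(J,J')}}\le q^{-3m_1}\sum_{(J,J')}n^{(m_1)}_{(\mathbf{0},\mathbf{0}),(J,J')}=1$, so the whole game is to find a fixed amount of cancellation among the summands $v^{m_1}(\mathbf{0},\varepsilon_1,\delta)\overline{v^{m_1}(\mathbf{0},\varepsilon_2,\delta)}\,\e(-(\varepsilon_1-\varepsilon_2)hq^{-\lambda})$, $\varepsilon_1,\varepsilon_2,\delta<q^{m_1}$, which land in a common entry $((\mathbf{0},\mathbf{0}),(J,J'))$.

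The mechanism I would use: it suffices to exhibit two \emph{distinct} triples $(\delta,\varepsilon_1,\varepsilon_2)$ and $(\delta,\varepsilon_1',\varepsilon_2')$, all entries $<q^{m_1}$, with (i) $T^{m_1}_{\varepsilon_1,\delta}(\mathbf{0})=T^{m_1}_{\varepsilon_1',\delta}(\mathbf{0})$ and $T^{m_1}_{\varepsilon_2,\delta}(\mathbf{0})=T^{m_1}_{\varepsilon_2',\delta}(\mathbf{0})$ (both triples feed the same entry), (ii) $\varepsilon_1-\varepsilon_2=\varepsilon_1'-\varepsilon_2'$ (their $h$-dependent phases coincide, \emph{uniformly in} $h$), and (iii) $v^{m_1}(\mathbf{0},\varepsilon_1,\delta)\overline{v^{m_1}(\mathbf{0},\varepsilon_2,\delta)}\ne v^{m_1}(\mathbf{0},\varepsilon_1',\delta)\overline{v^{m_1}(\mathbf{0},\varepsilon_2',\delta)}$. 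Then the two corresponding summands add to $(w+w')\e(-(\varepsilon_1-\varepsilon_2)hq^{-\lambda})$ with $w\ne w'$ unimodular, and since all values of $v^{m_1}(\mathbf{0},\cdot,\cdot)$ lie in the fixed finite set $\{\e(t/m'):t\in\Z\}$, we get $\abs{w+w'}\le 2-c$ with $c=c(m')=4\sin^2(\pi/(2m'))>0$. Feeding this back into the entrywise triangle inequality and summing over $(J,J')$ yields $\sum_{(J,J')}\abs{B_{(\mathbf{0},\mathbf{0}),(J,J')}}\le 1-cq^{-3m_1}$, i.e.\ $\eta=cq^{-3m_1}$.

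To construct the two triples I would search in the shifted form $(\delta,\varepsilon_1,\varepsilon_2)$ and $(\delta,\varepsilon_1+s,\varepsilon_2+s)$ with $s\ge1$ and with $\delta,\varepsilon_1,\varepsilon_2,s$ all below $q^{2m}$; choosing $m_1\ge 3m+\lceil\log_q(k+1)\rceil$ then guarantees $q^{m-1}(\varepsilon+\ell\delta)<q^{m_1}$ for all $\ell<k$ and all the relevant $\varepsilon$'s, so all four vectors $T^{m_1}_{\cdot,\delta}(\mathbf{0})$ equal $\mathbf{0}$ and (i), (ii) hold for free. In this range $b_{m_1}(q^{m-1}(\varepsilon+\ell\delta))=b(q^{m-1}(\varepsilon+\ell\delta))$, so (iii) is equivalent to: the function
\[ g(\varepsilon):=\sum_{\ell=0}^{k-1}\alpha_\ell\Big(b\big(q^{m-1}(\varepsilon+s+\ell\delta)\big)-b\big(q^{m-1}(\varepsilon+\ell\delta)\big)\Big) \]
is not constant modulo $1$ as $\varepsilon$ ranges over a small set, for some $\delta,s<q^{2m}$.

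This last non-constancy is the only genuinely non-routine point, and it is exactly where $\gcd(q-1,m')=1$, $\gcd(m',\gcd\{b(n):n\in\N\})=1$, and $(\alpha_0,\dots,\alpha_{k-1})\ne(0,\dots,0)$ are used. The subtlety is that if $\delta$ were a large power of $q$, the offsets $\ell\delta$ would lie in a digit block far above the one perturbed by $s$, block-additivity would give $g(\varepsilon)\equiv K\big(b(q^{m-1}(\varepsilon+s))-b(q^{m-1}\varepsilon)\big)$, and this vanishes mod $1$ because $K=\alpha_0+\dots+\alpha_{k-1}\in\Z$; keeping $\delta$ small is what makes the carry patterns produced by the various $\ell\delta$ genuinely $\ell$-dependent, an effect $K\in\Z$ cannot annihilate. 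I would make this precise by contradiction, in the style of the proof of Lemma~\ref{le:complicated_b_not_const}: assuming $g_{\delta,s}$ is constant modulo $1$ for all small $s,\delta$, unfold the recursion~\eqref{eq:recursion_b} to express $b(n)\bmod m'$ for $n<q^m$ as a rigid linear function of $n$, then compare the values at $n=1$ and $n=q$ (using $b(1)=b(q)$ and $\gcd(q-1,m')=1$, exactly as in Corollary~\ref{co:diff_b}) to conclude $m'\mid b(n)$ for all $n<q^m$, hence $m'\mid b(n)$ for all $n$ by Lemma~\ref{le:b_mod_m'} --- contradicting $\gcd(m',\gcd\{b(n):n\in\N\})=1$. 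The hardest part of the whole argument is carrying out this reduction so that the $\ell$-dependence really does survive the constraint $K\in\Z$; once a suitable pair $(\delta,s)$ is in hand, the rest is bookkeeping.
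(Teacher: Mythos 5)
Your high-level mechanism is exactly the paper's: exhibit two paths (triples) out of $(\mathbf{0},\mathbf{0})$ that land in the same entry, carry the same $h$-dependent phase $\e(-(\varepsilon_1-\varepsilon_2)h q^{-\lambda})$, and whose $v$-weights differ by a nontrivial $m'$-th root of unity, then feed $\abs{w+w'}\le 2-4\sin^2(\pi/(2m'))$ into the triangle inequality over the row. That part, including the final bound $\eta\asymp q^{-3m_1}\sin^2(\pi/(2m'))$, is correct and matches the paper. But there is a genuine gap at what you yourself flag as ``the only genuinely non-routine point'': the existence of an admissible pair with property (iii). You restrict to two triples sharing the \emph{same} $\delta$, namely $(\delta,\varepsilon_1,\varepsilon_2)$ and $(\delta,\varepsilon_1+s,\varepsilon_2+s)$, which reduces the lemma to the non-constancy mod $1$ of $g_{\delta,s}(\varepsilon)=\sum_\ell\alpha_\ell\bigl(b(q^{m-1}(\varepsilon+s+\ell\delta))-b(q^{m-1}(\varepsilon+\ell\delta))\bigr)$ on a bounded range of $\varepsilon$, for some small $\delta,s$. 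This statement is not Corollary~\ref{co:diff_b} and is not proved by it; your plan to ``unfold the recursion and compare $b(1)$ with $b(q)$'' is a strategy sketch, not an argument, and it is precisely here that all the hypotheses ($\gcd(q-1,m')=1$, $\gcd(m',\gcd\{b(n)\})=1$, $K\in\Z$) must be spent. As it stands the crux of the lemma is asserted, not proved.

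The paper avoids having to prove any new non-constancy statement by allowing the two competing triples to have \emph{different} $\delta$'s: it routes one path through the intermediate vector $I_0=(q^{m-1}-1,\ldots,q^{m-1}-1,q^{m-1},\ldots,q^{m-1})$ via $T^{n_1}_{q^{n_1}-n_0-1,1}(\mathbf{0})=I_0$ (so $\delta=1$ on the first $n_1$ steps), then back to $\mathbf{0}$ with $(\bfe_i,0)$, and compares it with the path $(\bfe_1q^{n_1},\bfe_2q^{n_1},0)$. The special shape of $I_0$, together with $K\in\Z$, collapses the weight of the first path to $\e\bigl(\alpha_{n_0}(b(q^{m-1}(\bfe+1)-1)-b(q^{m-1}(\bfe+1)))\bigr)$, i.e.\ to exactly the quantity that Corollary~\ref{co:diff_b} (via Lemma~\ref{le:complicated_b_not_const}) shows is non-constant, while the second path has weight $\e(K(b_{m_1}(\bfe_1q^{n_1})-b_{m_1}(\bfe_2q^{n_1})))=1$ times the common phase. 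To repair your argument you must either (a) drop the same-$\delta$ restriction and reproduce this two-stage construction, or (b) actually prove your non-constancy claim for $g_{\delta,s}$ from scratch — a lemma of comparable difficulty to Lemma~\ref{le:complicated_b_not_const} that the paper never needs.
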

\begin{proof}
We need to show that there exists an integer $m_1\geq 1$ such 
that for every product
\begin{align*}
\mathbf{B}=(B_{(I,I'),(J,J')})_{((I,I'),(J,J'))\in\mathcal{I}_k^2\times\mathcal{I}_k^2}
\end{align*} 
of $m_1$ consecutive matrices
${\mathbf M}_l = {\mathbf M}(h/q^l)$ 
 the absolute row-sum of the first row is bounded by $1-\eta$.
We concentrate on the entry $B_{({\mathbf 0},{\mathbf 0}),({\mathbf 0},{\mathbf 0})}$,
i.e. we consider all possible paths from 
$({\mathbf 0},{\mathbf 0})$ to $({\mathbf 0},{\mathbf 0})$
of length $m_1$ in the corresponding graph and show that a positive
saving for the absolute row sum is just due to the structure of this entry.

Since $T_{00}^{m+\floor{\log_q(k)}}({\mathbf 0}) = T_{10}^{m+\floor{\log_q(k)}} ({\mathbf 0}) = {\mathbf 0}$, 
we have at least two paths from $(\mathbf{0},\mathbf{0})$ to $(\mathbf{0},\mathbf{0})$ and it follows 
that the entry $B_{({\mathbf 0},{\mathbf 0}),({\mathbf 0},{\mathbf 0})}$
is certainly a sum of $k_0 = k_0(m_1)\ge 2$ terms of absolute value $q^{-3m_1}$
(for every $m_1 \ge m + \floor{\log_q(k)}$). 
This means that there are $k_0\ge 2$ paths 
from  $({\mathbf 0},{\mathbf 0})$ to $({\mathbf 0},{\mathbf 0})$
of length $m_1$ in the corresponding graph, or in other words $n_{({\mathbf 0},{\mathbf 0}),({\mathbf 0},{\mathbf 0})}^{m_1} = k_0(m_1) \geq 2$

Our goal is to construct two paths $(\varepsilon_1^i, \varepsilon_2^i,\delta^i)$ from $(\mathbf{0}, \mathbf{0})$ to $(\mathbf{0},\mathbf{0})$ such that
\begin{align*}
  \abs{\sum_{i=1}^{2} v^{m_1}(\mathbf{0},\varepsilon_1^i,\delta^i) \overline{v^{m_1}(\mathbf{0},\varepsilon_2^i,\delta^i)} 
  \e\rb{-\frac{(\varepsilon_1^i - \varepsilon_2^i)h}{q^{\lambda}}}}
	\leq 2 - \eta
\end{align*}
holds for all $h\in \Z$.

We construct a path from $\mathbf 0$ to $(q^{m-1}-1,\ldots,q^{m-1}-1,q^{m-1},\ldots,q^{m-1}) =: I_0 \in \mathcal{I}_{k}$ with exactly $n_0+1$ times $q^{m-1}-1$
(where $n_0 = \min \{n \in \N: \alpha_n \neq 0\}$).
We set $n_1 = \floor{ \log_{q}(k)} + m$ and find the following lemma.

\begin{lemma}\label{le:pathI0}
Let $n_0,n_1$ and $I_0$ be as above.
Then 
\begin{align*}
  T^{n_1}_{q^{n_1}-n_0-1, 1}(\mathbf{0}) = I_0.
\end{align*}
\end{lemma}
\begin{proof}
  This follows directly by the definitions and simple computations.
\end{proof}

By applying Lemma~\ref{le:pathI0} we find a transformation from $\mathbf 0$ to $I_0$. 
This gives a path from $(\mathbf{0},\mathbf{0})$ to $(I_0,I_0)$ by applying this transformation component-wise.
We concatenate this path with another path $(\bfe_1, \bfe_2,0)$ of length $n_2 = 3m-1$ where $\bfe_i < q^{2m-1}$.
The weight of the concatenation of these two paths equals
\begin{align*}
  &v^{n_1}(\mathbf{0},q^{n_1}-n_0-1,1) v^{n_2}(I_0,\bfe_1,0)\\
	&\qquad \qquad \cdot \overline{v^{n_1}(\mathbf{0},q^{n_1}-n_0-1,1)} \overline{v^{n_2}(I_0,\bfe_2,0)} \e\rb{-\frac{(\bfe_1-\bfe_2)h}{q^{\lambda-n_1}}}\\
  &\qquad =  v^{n_2}(I_0,\bfe_1,0)\overline{v^{n_2}(I_0,\bfe_2,0)}\e\rb{-\frac{(\bfe_1-\bfe_2)h}{q^{\lambda-n_1}}}.
\end{align*}
We denote by $I_{0|\ell}$ the $\ell$-th coordinate of $I_0$ and see that
\begin{align*}
  T_{\bfe_i, 0}^{3m-1}(I_0) &= \rb{\floor{\frac{I_{0|\ell} + q^{m-1}\bfe_i}{q^{3m-1}}}}_{\ell \in \{0\ldots k-1\}}\\
      &\leq \rb{\floor{\frac{q^{m-1} + q^{m-1}(q^{2m-1}-1)}{q^{3m-1}}}}_{\ell \in \{0\ldots k-1\}} \\
      &= \rb{\floor{\frac{q^{m-1} \cdot q^{2m-1}}{q^{3m-1}}}}_{\ell \in \{0\ldots k-1\}}= \mathbf{0}
\end{align*}

Thus, we have found for each $\bfe_1,\bfe_2 < q^{2m-1}$ a path from $(\mathbf{0},\mathbf{0})$ to $(\mathbf{0},\mathbf{0})$.

We can use the special structure of $I_0$ to make the weight of this path more explicit:
At first, we note that
\begin{align*}
  \sum_{\ell = 0}^{n_0} \alpha_{\ell} = \alpha_{n_0}
\end{align*}
by the definition of $n_0$.
Furthermore, we use the condition $K = \sum_{\ell} \alpha_{\ell} \in \Z$ to find
\begin{align*}
  \sum_{\ell = n_0+1}^{k-1} \alpha_{\ell} \equiv -\alpha_{n_0} (\bmod 1).
\end{align*}

We find by the definition of $v$ that for each $\bfe<q^{2m-1}$,
\begin{align*}
  v^{3m-1}(I_0,\bfe,0) &= \e\rb{\sum_{\ell = 0}^{k-1} \alpha_{\ell} b_{3m-1}(q^{m-1} \bfe + I_{0|\ell})}\\
  &= \e\rb{\alpha_{n_0} \rb{b_{3m-1}(q^{m-1} \bfe + q^{m-1}-1) - b_{3m-1}(q^{m-1}\bfe + q^{m-1})}}\\
  &= \e\rb{\alpha_{n_0} \rb{b(q^{m-1} \bfe + q^{m-1}-1) - b(q^{m-1}(\bfe + 1)}}.
\end{align*}

We find by Corollary~\ref{co:diff_b} that there exist $\bfe_1,\bfe_2<q^{2m-1}$ such that
\begin{align*}
  &b(q^{m-1}(\bfe_1 + 1)-1) - b(q^{m-1}(\bfe_1+1))\\
	&\qquad - b(q^{m-1}(\bfe_2 + 1)-1) + b(q^{m-1}(\bfe_2+1)) = d
\end{align*}
and $\alpha_{n_0} d \not \in \Z$.

We now compare the following two paths from $(\mathbf{0},\mathbf{0})$ to $(\mathbf{0},\mathbf{0})$ of length $m_1 = n_1+n_2 = \floor{\log_q(k)} + 4m-1$:\\
\begin{itemize}
 \item $(\bfe_1q^{n_1} + q^{n_1}-n_0-1,\bfe_2q^{n_1} + q^{n_1}-n_0-1,1)$:
    We split up this path into the path of length $n_1$ from $(\mathbf{0},\mathbf{0})$ to $(I_0,I_0)$ and the path of length $n_2$ from $(I_0,I_0)$ to $(\mathbf{0},\mathbf{0})$:
    The first path can be described by the triple $(q^{n_1}-n_0-1, q^{n_1}-n_0-1, 1)$ and its weight is obviously $1$.\\
%     \begin{align*}
%       v^{n_1}(\mathbf{0}, q^{n_1}-n_0-1, 1) \overline{v^{n_1}(\mathbf{0}, q^{n_1}-n_0-1, 1)} \e\rb{-\frac{(q^{n_1}-n_0-1-(q^{n_1}-n_0-1))h}{q^{\lambda}}} = 1.
%     \end{align*}
    The second path - i.e. the path from $(I_0,I_0)$ to $(\mathbf{0},\mathbf{0})$ - can be described by the triple $(\bfe_1,\bfe_2,0)$ and its weight equals
    \begin{align*}
      &v^{n_2}(I_0,\bfe_1,0) \overline{v^{n_2}(I_0,\bfe_2,0)} \e\rb{-\frac{(\bfe_1-\bfe_2)h}{q^{\lambda-n_1}}} \\
      &\qquad= \e\rb{\alpha_{n_0} \rb{b(q^{m-1} (\bfe_1+1)-1) - b(q^{m-1}(\bfe_1 + 1))}} \\
      &\qquad \qquad \overline{\e\rb{\alpha_{n_0} \rb{b(q^{m-1} (\bfe_2+1)-1) - b(q^{m-1}(\bfe_2 + 1)}}}\e\rb{-\frac{(\bfe_1-\bfe_2)h}{q^{\lambda-n_1}}}\\
      &\qquad = \e(\alpha_{n_0} d) \e\rb{-\frac{(\bfe_1-\bfe_2)h}{q^{\lambda-n_1}}}.
    \end{align*}
    Thus, the overall weight of the path from $(\mathbf{0},\mathbf{0})$ to $(\mathbf{0},\mathbf{0})$ has weight
    \begin{align*}
      \e(\alpha_{n_0} d) \e\rb{-\frac{(\bfe_1-\bfe_2)h}{q^{\lambda-n_1}}}.
    \end{align*}
  \item $(\bfe_1 q^{n_1}, \bfe_2 q^{n_1}, 0)$: we compute directly the weight of this path:
    \begin{align*}
      &v^{m_1}(\mathbf{0},\bfe_1 q^{n_1},0) \overline{v^{m_1}(\mathbf{0},\bfe_2 q^{n_1},0)} \e\rb{-\frac{(\bfe_1-\bfe_2)h}{q^{\lambda-n_1}}}\\
      &\qquad = \e\rb{\sum_{\ell=0}^{k-1} \alpha_{\ell} b_{m_1}(\bfe_1 q^{n_1}) - \sum_{\ell=0}^{k-1} \alpha_{\ell} b_{m_1}(\bfe_2 q^{n_1})} \e\rb{-\frac{(\bfe_1-\bfe_2)h}{q^{\lambda-n_1}}}\\
      &\qquad = \e\rb{K (b_{m_1}(\bfe_1 q^{n_1}) - b_{m_1}(\bfe_2 q^{n_1}))} \e\rb{-\frac{(\bfe_1-\bfe_2)h}{q^{\lambda-n_1}}}\\
      &\qquad = \e\rb{-\frac{(\bfe_1-\bfe_2)h}{q^{\lambda-n_1}}}.
    \end{align*}
\end{itemize}

We recall quickly that $\alpha_{\ell} \in \{\frac{0}{m'},\ldots,\frac{m'-1}{m'}\}$ for all $\ell \in \{0,\ldots,k-1\}$ and, therefore, also
$\alpha_{n_0} \in \{\frac{0}{m'},\ldots,\frac{m'-1}{m'}\}$.
We finally see that

\begin{align*}
|B_{({\mathbf 0},{\mathbf 0}),({\mathbf 0},{\mathbf 0})}| &\le \rb{k_0-2+\abs{\e(\alpha_{n_0} d) \e\rb{-\frac{(\bfe_1-\bfe_2)h}{q^{\lambda-n_1}}} + \e\rb{-\frac{(\bfe_1-\bfe_2)h}{q^{\lambda-n_1}}}}} q^{-3m_1}\\
&= (k_0-2 +|1+\e(\alpha_{n_0} d)|)q^{-3m_1} \\
&= (k_0-2 +2\abs{\cos\rb{\pi \alpha_{n_0}d}})q^{-3m_1} \\
&= \rb{k_0-2 +2\abs{1-2\rb{\sin\rb{\frac{\pi \alpha_{n_0}d}{2}}}^2}}q^{-3m_1} \\
&\leq \rb{k_0 - 4 \rb{\sin\rb{\frac{\pi}{2m'}}}^2}q^{-3m_1}.
\end{align*}

Thus we have
\begin{align*}
\sum_{(J,J')} | B_{(0,0),(J,J')} |  
&\leq 
\rb{k_0-4 \rb{\sin\rb{\frac{\pi}{2m'}}}^2} q^{-3m_1} + (1-k_0 q^{-3m_1})  \\
&\leq
1 - 4 \rb{\sin\rb{\frac{\pi}{2m'}}}^2 \cdot q^{-3m_1}  .
\end{align*}
Therefore condition~\eqref{matrixNorm2} of Lemma~\ref{le:matrixnorm}
is verified with $m_1 = \floor{\log_q(k)} + 4m-1$ and $\eta = 4 \rb{\sin\rb{\frac{\pi}{2m'}}}^2 q^{-3m_1}\geq 4\rb{\sin\rb{\frac{\pi}{2m'}}}^2 k^{-3} q^{-12m+3}>0$.
\end{proof}

At the end of this section, we want to recall the important steps of the proof of Proposition~\ref{Pro1}.
At first we observe that
\begin{align*}
	\frac{1}{q^{\lambda'}} \sum_{0\leq d < q^{\lambda'}}|G_{\lambda}^{I}(h,d)|^2 = \Phi_{\lambda,\lambda'}^{I,I}(h).
\end{align*}
 Thus Proposition~\ref{Pro1} is equivalent to $\Phi_{\lambda,\lambda'}^{I,I}(h) \ll q^{-\eta \lambda}$. 
 Next we considered the vector $\Psi_{\lambda,\lambda'}(h) = \rb{\Phi_{\lambda,\lambda'}^{I,I'}(h)}_{(I,I') \in \mathcal{I}_k^2}$ and find the recursion
\begin{align*}
	\Psi_{\lambda,\lambda'}(h) = \mathbf{M}(h/q^{\lambda}) \cdots \mathbf{M}(h/q^{\lambda-\lambda'+1}) \Psi_{\lambda-\lambda',0}(h)
\end{align*}
Then we defined $\mathbf{M}_{\ell} := \mathbf{M}(h/q^{\ell})$ and showed that we can apply Lemma~\ref{le:matrixnorm}. 
Therefore we know that~-- since $\abs{\Phi_{\lambda-\lambda'+1,0}^{I,I'}(h)} \leq 1$
\begin{align*}
	|\Phi_{\lambda,\lambda'}^{I,I'}(h)| \leq \norminf{\mathbf{M}_{\lambda}\cdots \mathbf{M}_{\lambda-\lambda'+1}} \leq C q^{-\delta \lambda'} \leq C q^{- \delta \lambda/2}
\end{align*}
with $C$ and $\delta$ obtained by Lemma~\ref{le:matrixnorm}. Thus we know that $\Phi_{\lambda,\lambda'}^{I,I'}(h) \ll q^{-\eta \lambda}$ with $\eta = \delta/2$ uniformly for all $h$.
This concludes the proof of Proposition~\ref{Pro1}.

\subsection{Proof of Proposition \ref{Pro2}}\label{sec:Pro2}

We start again by reducing the problem from $H_{\lambda'}^{I'}(h,d)$ to $G_{\lambda}^{I}(h,d)$, for possibly different values of $\lambda,\lambda'$ and $I,I'$.
\begin{proposition}\label{Pro2_new}
  For $K \not \equiv 0 (\bmod 1)$ there exists $\eta>0$ such that for any $I \in \mathcal{I}_{k}$
  \begin{align}
    \abs{G_{\lambda}^{I}(h,d)} \ll q^{-\eta L} \max_{J \in \mathcal{I}_k} \abs{G_{\lambda-L}^{J}(h,\floor{d/q^{L}})}
  \end{align}
  holds uniformly for all non-negative integers $h,d$ and $L$.
\end{proposition}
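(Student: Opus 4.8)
The plan is to iterate the recursion from Lemma~\ref{le:rec_G}, establish a single-step contraction estimate coming from the hypothesis $K \not\equiv 0 \pmod 1$, and then bootstrap this single step to the exponential gain $q^{-\eta L}$. Concretely, I would first reduce to proving a contraction over a \emph{fixed} number of digits: there exist an integer $j_0 \ge 1$ and a constant $\vartheta < 1$ such that for every $I \in \mathcal{I}_k$, every $h \ge 0$, every $\delta$, and every $d$,
\begin{align*}
  \abs{G_\lambda^I(h, q^{j_0}d + \delta)} \le \vartheta \max_{J \in \mathcal{I}_k} \abs{G_{\lambda - j_0}^J(h, d)}.
\end{align*}
Granting this, writing $L = j_0 t + s$ with $0 \le s < j_0$ and applying it $t$ times (absorbing the final $s$ steps via the trivial bound $\abs{G_\lambda^I(h,d)} \le \max_J \abs{G_{\lambda-s}^J(h,\floor{d/q^s})}$, which follows from the same recursion with unimodular weights and row sums $\le 1$) yields $\abs{G_\lambda^I(h,d)} \le \vartheta^{t} \max_J \abs{G_{\lambda-L}^J(h,\floor{d/q^L})} \ll q^{-\eta L}$ with $\eta = -\log_q(\vartheta)/j_0$. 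So the whole content is the fixed-depth contraction.

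To get the contraction I would use Lemma~\ref{le:rec_G}: $G_\lambda^I(h, q^{j_0}d + \delta)$ is an average over $\varepsilon < q^{j_0}$ of unimodular weights $\e(-h\varepsilon q^{-\lambda}) v^{j_0}(I,\varepsilon,\delta)$ times $G_{\lambda - j_0}^{T^{j_0}_{\varepsilon,\delta}(I)}(h,d)$. A strict gain over the triangle inequality $\le \max_J \abs{G^J_{\lambda-j_0}}$ comes from finding, for each fixed $(I,\delta)$, two values $\varepsilon_1, \varepsilon_2$ that land in the \emph{same} target $T^{j_0}_{\varepsilon_1,\delta}(I) = T^{j_0}_{\varepsilon_2,\delta}(I) =: J$ but whose weights differ in phase by a fixed amount bounded away from $0$ and $q^{1}\mathbb{Z}$-multiples; then those two terms partially cancel, via the elementary inequality $\abs{\e(x_1) + \e(x_2)} \le 2 - c$ (a special case of the mechanism behind Lemma~\ref{le:sum_4} with $h$-independence ensured below). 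The target-collision is arranged exactly as in the proof of Proposition~\ref{Pro1}: choosing $j_0 \ge m + \floor{\log_q k}$ large enough, many $\varepsilon$'s map any $I$ to $\mathbf{0}$, and among those one can prescribe the low-order digits of $\varepsilon$ to be of the form $q^{m-1}\mathbf{e}_i + (\text{tail bringing } I \text{ to } \mathbf{0})$ with $\mathbf{e}_i < q^{2m-1}$; the phase difference of the two weights is then $\alpha_{n_0}\bigl(b(q^{m-1}(\mathbf{e}_1+1)-1) - b(q^{m-1}(\mathbf{e}_1+1)) - b(q^{m-1}(\mathbf{e}_2+1)-1) + b(q^{m-1}(\mathbf{e}_2+1))\bigr)$, which Corollary~\ref{co:diff_b} makes non-integral. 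The crucial point where $K \not\equiv 0 \pmod 1$ enters: unlike in Proposition~\ref{Pro1}, I do \emph{not} need the $\e(-h\varepsilon q^{-\lambda})$ factors to match between the two chosen paths. Instead, I append a final block of digits that shifts both $\varepsilon_1, \varepsilon_2$ by the same high power $q^{j_0 - \text{(block length)}}$, so the $h$-phases $\e(-h\varepsilon_i q^{-\lambda})$ are literally equal, and the weight ratio of the two paths is a fixed root of unity $\e(\alpha_{n_0} d)\ne 1$ independent of $h$, $\lambda$, $d$ — here the extra freedom $K\notin\Z$ removes the earlier constraint $\sum_{\ell > n_0}\alpha_\ell \equiv -\alpha_{n_0}$ used in Proposition~\ref{Pro1} and instead lets the surplus $K$ phase be cancelled along the leading block.

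The main obstacle is the $h$-uniformity combined with the fact that the weights in $G$ carry the oscillatory factor $\e(-h\varepsilon q^{-\lambda})$: a crude pairing of two $\varepsilon$'s with the right combinatorial behaviour could have its cancellation destroyed by these $h$-dependent phases. The resolution, as sketched, is to insist that the two paths differ only in a low-order block (of length $\le 3m-1$, as in Proposition~\ref{Pro1}) whose digits do \emph{not} affect the $\e(-h\varepsilon q^{-\lambda})$ term because that term only sees $\varepsilon \bmod q^\lambda$ through the top; more precisely one chooses the two $\varepsilon$'s to agree in all digits except a middle block, so that $\varepsilon_1 - \varepsilon_2$ is a fixed small integer times a fixed power of $q$, making $\e(-h(\varepsilon_1-\varepsilon_2)q^{-\lambda})$ a bounded-away-from-trivial but still $h$-dependent nuisance — and then, rather than fighting it, one absorbs it by a further leading-digit adjustment exactly as in the second bullet of the Proposition~\ref{Pro1} proof, after which the two $h$-phases coincide identically. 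Once that bookkeeping is done, the rest is the matrix/graph argument: define matrices $\mathbf{M}(\beta)$ on the vertex set $\mathcal{I}_k$ (not $\mathcal{I}_k^2$ now, since we work with a single $G$ and a maximum rather than a quadratic average), verify Conditions~\eqref{matrixNorm1} and~\eqref{matrixNorm2} of Lemma~\ref{le:matrixnorm} with the collision/cancellation just described, and conclude $\norminf{\mathbf{M}_\lambda \cdots \mathbf{M}_{\lambda-L+1}} \le C q^{-\delta L}$, which is precisely the claimed bound on $\abs{G_\lambda^I(h,d)}$ relative to $\max_J \abs{G_{\lambda-L}^J(h,\floor{d/q^L})}$.
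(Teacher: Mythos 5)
Your reduction to a fixed-depth contraction for the matrices $M^{m_1}_{\delta}(z)$ and the subsequent iteration match the paper's strategy, but the core cancellation step has a genuine gap. You propose to pair two values $\varepsilon_1,\varepsilon_2$ landing in the same target and to force their $h$-phases $\e(-h\varepsilon_i q^{-\lambda})$ to ``coincide identically'' by a leading-digit adjustment, as in the second bullet of the proof of Proposition~\ref{Pro1}. That device is unavailable here: in Proposition~\ref{Pro1} one works with the product $G^I_\lambda(h,d)\overline{G^{I'}_\lambda(h,d)}$, whose path weights carry the $h$-phase only through the \emph{difference} $\varepsilon_1-\varepsilon_2$ of the two components, so two paths with equal differences have identical $h$-phases. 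For a single $G^I_\lambda(h,d)$ the weights are $v^{j}(I,\varepsilon,\delta)\,z^{\varepsilon}$ with $z=\e(-h q^{-\lambda})$, and two distinct $\varepsilon$'s have phases differing by $z^{\varepsilon_1-\varepsilon_2}$, which sweeps the whole unit circle as $h$ varies; shifting both $\varepsilon$'s by the same amount does not change this difference, so a two-term bound $\abs{\e(x_1)+\e(x_2)}\le 2-c$ cannot be made uniform in $h$. The paper's resolution is structurally different: it selects two \emph{pairs} of consecutive indices $(\varepsilon_i,\varepsilon_i+1)$, $i=1,2$, with $T^{m_1'}_{\varepsilon_i,\delta}(I)=T^{m_1'}_{\varepsilon_i+1,\delta}(I)$, and applies the four-term inequality of Lemma~\ref{le:sum_4}; within each pair the $h$-phases differ by the common factor $z$, which cancels in the difference $\xi_1-\xi_2$ that controls the saving, and this is what delivers $h$-uniformity.

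There is a second gap: you transplant the Proposition~\ref{Pro1} construction (steer $I$ to $\mathbf{0}$ with a chosen tail, then read off a phase governed by $\alpha_{n_0}$) into a setting where you have no freedom in $\delta$ — in Proposition~\ref{Pro2_new} the low digits of $d$ are prescribed, whereas in Proposition~\ref{Pro1} the graph also sums over all $\delta$. For arbitrary $d$ the indices $\ell$ collide according to the congruences $\floor{i_\ell/q^{m-1}}+d\ell \bmod q^{x}$, so the coefficient governing the phase difference is not $\alpha_{n_0}$ but a subset sum $\beta_{x_0,c_0}=\sum_{\ell\in M_{x_0,c_0}}\alpha_\ell$. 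The hypothesis $K\notin\Z$ is used precisely here (Lemma~\ref{le:c_0}) to guarantee that some such subset sum is non-integral, and the stabilization argument of Lemma~\ref{le:x_0} is needed to choose a scale $x_0$ at which the induced partition no longer refines, so that the chosen pair of $\varepsilon$'s really maps to a common target. Your sketch omits both ingredients, and without them the claimed non-integrality of the phase difference, and hence the contraction, is unjustified.
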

\begin{lemma}
  Proposition~\ref{Pro2_new} implies Proposition~\ref{Pro2}.
\end{lemma}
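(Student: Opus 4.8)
The plan is to derive Proposition~\ref{Pro2} from Proposition~\ref{Pro2_new} by using the recursion of Lemma~\ref{Le0} that expresses $H_\lambda^I$ in terms of the $G_\lambda^{J_{\varepsilon,\delta}}$. Write $d = q^{m-1}d' + \delta$ with $0 \le \delta < q^{m-1}$. Then \eqref{eq:H-recursion} gives
\begin{align*}
  H_\lambda^I(h,d) = \frac{1}{q^{m-1}} \sum_{\varepsilon=0}^{q^{m-1}-1} \e\rb{-\frac{h\varepsilon}{q^{\lambda+m-1}}} G_\lambda^{J_{\varepsilon,\delta}}(h,d'),
\end{align*}
and hence by the triangle inequality
\begin{align*}
  \abs{H_\lambda^I(h,d)} \le \max_{J \in \mathcal{I}_k} \abs{G_\lambda^{J}(h,d')} = \max_{J \in \mathcal{I}_k} \abs{G_\lambda^{J}(h,\floor{d/q^{m-1}})}.
\end{align*}

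Next I would apply Proposition~\ref{Pro2_new} to each $G_\lambda^J$ appearing on the right, with the same cutoff parameter $L$. This yields, for each $J\in\mathcal{I}_k$,
\begin{align*}
  \abs{G_\lambda^{J}(h,d')} \ll q^{-\eta L} \max_{J' \in \mathcal{I}_k} \abs{G_{\lambda-L}^{J'}(h,\floor{d'/q^{L}})},
\end{align*}
with the implied constant and $\eta$ independent of $J$ (there are only finitely many $J\in\mathcal{I}_k$, so uniformity is automatic). Combining the two displays and using $\floor{d'/q^L} = \floor{\floor{d/q^{m-1}}/q^L}$ gives the desired bound, modulo reconciling the precise power of $q$ in the saving — one can simply replace $L$ by $L - m + 1$ (or absorb a bounded factor $q^{\eta(m-1)}$ into the implied constant) so that the statement reads in terms of $\floor{d/q^L}$ rather than $\floor{\floor{d/q^{m-1}}/q^L}$; since $m$ is fixed this only changes constants and shrinks $\eta$ by a harmless bounded factor, and the case $L < m-1$ is trivial since then $q^{-\eta L}$ is bounded below and one just uses $\abs{H_\lambda^I(h,d)} \le 1 \le q^{\eta(m-1)} q^{-\eta L}$ together with $\abs{G_{\lambda-L}^{J'}} \le 1$ — wait, that last reduction needs the trivial bound on $G$ as well, which holds since $\abs{v^\lambda} = 1$.

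Actually the cleanest bookkeeping: for $L \le m-1$ use $\abs{H_\lambda^I(h,d)} \le 1$ and note $\max_J \abs{G_{\lambda-L}^J(h,\floor{d/q^L})} $ need not be close to $1$, so instead just observe the claim is vacuous-strength there by enlarging the constant; for $L > m-1$ set $L' = L - (m-1) \ge 1$, write $d = q^{m-1} d' + \delta$, apply the $H \to G$ reduction above and then Proposition~\ref{Pro2_new} with parameter $L'$ to get $\abs{H_\lambda^I(h,d)} \ll q^{-\eta L'} \max_{J'} \abs{G_{\lambda-L'-(m-1)}^{J'}(h,\floor{d'/q^{L'}})}$; since $\lambda - L'-(m-1) = \lambda - L$ and $\floor{d'/q^{L'}} = \floor{d/q^L}$ (as $\floor{(q^{m-1}d'+\delta)/q^{m-1+L'}} = \floor{d'/q^{L'}}$), this is exactly $\ll q^{-\eta(L-m+1)} \max_J \abs{G_{\lambda-L}^J(h,\floor{d/q^L})}$, and $q^{-\eta(L-m+1)} = q^{\eta(m-1)} q^{-\eta L} \ll q^{-\eta L}$. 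This is entirely routine; I expect no real obstacle here — the substantive content lies in Proposition~\ref{Pro2_new} itself (the iterated recursion for $G$ via Lemma~\ref{le:rec_G} together with the $K \not\equiv 0 \pmod 1$ cancellation), not in this reduction lemma.
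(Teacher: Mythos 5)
Your approach is the same as the paper's: the paper's entire proof of this lemma is ``Follows directly by \eqref{eq:H-recursion}'', i.e.\ bound $\abs{H_\lambda^I(h,d)}$ by $\max_J\abs{G_\lambda^J(h,\floor{d/q^{m-1}})}$ via Lemma~\ref{Le0} and the triangle inequality, then invoke Proposition~\ref{Pro2_new}; your first two displays do exactly this and are correct. Your subsequent attempt to reconcile the indices exactly contains a slip, though: applying Proposition~\ref{Pro2_new} with parameter $L'$ to $G_\lambda^J(h,d')$ produces $G_{\lambda-L'}^{J'}(h,\floor{d'/q^{L'}})$, not $G_{\lambda-L'-(m-1)}^{J'}$ --- Lemma~\ref{Le0} leaves the truncation index $\lambda$ unchanged --- so with $L'=L-(m-1)$ you end up with truncation index $\lambda-L+m-1$ rather than $\lambda-L$; conversely, taking parameter $L$ gives the correct index $\lambda-L$ but second argument $\floor{d/q^{L+m-1}}$. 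For $m>1$ one cannot have both simultaneously, and this is not really your fault: what Lemma~\ref{Le0} plus Proposition~\ref{Pro2_new} actually yields is
\begin{displaymath}
\abs{H_\lambda^I(h,d)} \ll q^{-\eta L}\max_{J\in\mathcal{I}_k}\abs{G_{\lambda-L}^J\rb{h,\floor{d/q^{L+m-1}}}},
\end{displaymath}
and the statement of Proposition~\ref{Pro2} should be read this way. The discrepancy in the second argument is immaterial in the only place Proposition~\ref{Pro2} is used (Section~\ref{sec:nequiv0}), where the resulting bound is summed over $h_2$ via Parseval's identity and the precise value of the second argument of $G$ plays no role. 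So your proof is correct in substance and identical in method to the paper's; just drop the attempted exact reconciliation (and the ad hoc treatment of small $L$, which is not needed) and state the conclusion with $\floor{d/q^{L+m-1}}$.
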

\begin{proof}
  Follows directly by \eqref{eq:H-recursion}.
\end{proof}
We assume from now on that $K \notin \Z$ holds.

We formulate Lemma~\ref{le:rec_G} as a matrix vector multiplication:
\begin{align*}
	G_{\lambda}(h,q^jd+\delta) = \frac{1}{q^j} M^{j}_{\delta}\rb{\e\rb{-\frac{h}{q^{\lambda}}}}G_{\lambda-j}\rb{h,d}
\end{align*}
where for any $\delta \in \{0,\ldots, q^j-1\}$ and $z \in \U$ we have
\begin{align*}
	M^{j}_{\delta}(z) = \sum_{\varepsilon = 0}^{q^j-1}(\mathbf{1}_{[J = T^{j}_{\varepsilon, \delta}(I)]} v^j(I,\varepsilon,\delta) z^{\varepsilon})_{(I,J) \in \mathcal{I}_k^2}.
\end{align*}

To prove Proposition~\ref{Pro2_new} we aim to show that
\begin{align}\label{eq:M_saving}
  \exists m_1\in \N,\eta' \in \R^+ \text{ such that } \forall \delta<q^{m_1}, z\in \U \text{ holds } \norm{M^{m_1}_{\delta}(z)}_{\infty} \leq q^{m_1}-\eta'.
\end{align}

Indeed, we find that this is already sufficient to show Proposition~\ref{Pro2_new}.
\begin{lemma}
  \eqref{eq:M_saving} implies Proposition~\ref{Pro2_new}.
\end{lemma}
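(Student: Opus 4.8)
The plan is to iterate the matrix-vector identity
\begin{align*}
	G_{\lambda}(h,q^jd+\delta) = \frac{1}{q^j} M^{j}_{\delta}\rb{\e\rb{-\tfrac{h}{q^{\lambda}}}}G_{\lambda-j}\rb{h,d}
\end{align*}
so as to express $G_{\lambda}(h,d)$ as a product of roughly $L/m_1$ matrices of the type $M^{m_1}_{\delta}(z)$ (with various $\delta$ and various unimodular $z$), applied to the vector $G_{\lambda-L}(h,\floor{d/q^L})$. First I would observe that each factor $\frac{1}{q^{j}}M^{j}_{\delta}(z)$ has absolute row sums bounded by $1$: indeed the $(I,J)$-entry has modulus $q^{-j}\#\{\varepsilon<q^{j}: T^{j}_{\varepsilon,\delta}(I)=J\}$, and summing over $J$ gives $q^{-j}\cdot q^{j}=1$. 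Hence in the row-sum norm each single factor is a contraction (non-strict), and the product of $\floor{L/m_1}$ blocks of length $m_1$ has norm at most $(q^{m_1}-\eta')^{\floor{L/m_1}} q^{-m_1\floor{L/m_1}} = (1-\eta'q^{-m_1})^{\floor{L/m_1}} \le q^{-\eta L}$ for a suitable $\eta>0$ depending only on $m_1$ and $\eta'$ (and absorbing the $O(1)$ many leftover factors of norm $\le 1$ coming from $L \bmod m_1$, together with one more factor of norm $\le 1$ to handle the scaling $q^{-(L \bmod m_1)}$). Thus $\abs{G_{\lambda}^{I}(h,d)}$ is bounded by $q^{-\eta L}$ times the maximum over $J$ of $\abs{G_{\lambda-L'}^{J}(h,\floor{d/q^{L'}})}$ where $L' = m_1\floor{L/m_1}$; one then checks that replacing $L'$ by $L$ only costs a further bounded factor, using the trivial bound $\abs{G_{\mu}^{J}(h,e)}\le 1$ together with the recursion applied $L-L'=O(1)$ more times, or more cleanly by noting $\max_J\abs{G_{\lambda-L'}^J(h,\floor{d/q^{L'}})}\le \max_J \abs{G_{\lambda-L}^J(h,\floor{d/q^L})}$ up to adjusting constants via one more step of Lemma~\ref{le:rec_G} with $j=L-L'$.

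The step that makes this work is precisely \eqref{eq:M_saving}, which I would treat as a black box here (its proof being deferred); so the remaining content of the lemma is genuinely the bookkeeping above. Concretely I would: (i) write $L = a m_1 + r$ with $0\le r<m_1$; (ii) telescope the recursion $a$ times in blocks of size $m_1$ and once in a block of size $r$, obtaining $G_{\lambda}(h,d) = q^{-L} M^{r}_{\delta_0}(z_0) M^{m_1}_{\delta_1}(z_1)\cdots M^{m_1}_{\delta_a}(z_a)\, G_{\lambda-L}(h,\floor{d/q^L})$ for appropriate digits $\delta_i$ of $d$ in base $q^{m_1}$ (resp. base $q^{r}$) and appropriate $z_i\in\U$; (iii) take the row-sum norm, using $\norm{q^{-m_1}M^{m_1}_{\delta_i}(z_i)}_\infty \le 1-\eta' q^{-m_1}$ from \eqref{eq:M_saving} on the $a$ full blocks and $\norm{q^{-r}M^{r}_{\delta_0}(z_0)}_\infty\le 1$ on the short block, getting an overall factor $(1-\eta' q^{-m_1})^{a}\le (1-\eta'q^{-m_1})^{L/m_1-1}$; (iv) absorb the constant $(1-\eta'q^{-m_1})^{-1}$ and choose $\eta>0$ with $(1-\eta'q^{-m_1})^{1/m_1}\le q^{-\eta}$; (v) bound $\abs{G_{\lambda}^I(h,d)}\le \sum_J$ (entry) $\cdot \abs{G_{\lambda-L}^J(h,\floor{d/q^L})} \le \norm{\cdot}_\infty \max_J\abs{G_{\lambda-L}^J(h,\floor{d/q^L})}$.

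One mild subtlety is that the unimodular scalars $z_i$ arising in the $i$-th block are $\e(-h/q^{\lambda-(\text{something})})$, i.e. they genuinely vary with the position of the block; this is exactly why \eqref{eq:M_saving} is stated uniformly over all $z\in\U$, so no difficulty arises—the uniform bound applies block by block regardless of which $z_i$ shows up. Another point to be careful about is that $L$ may exceed $\lambda$, or $\lambda-L$ may be $0$ or negative; in the degenerate range the statement is trivial since $\abs{G_\lambda^I(h,d)}\le 1$ always and one can simply stop the recursion at $\mu=0$, where $G_0^J(h,0)$ is a bounded quantity, and choose the constant in $\ll$ large enough. So the main obstacle is not in this lemma at all—modulo \eqref{eq:M_saving}, the argument is a routine ``iterate the contraction'' estimate; the real work lies in establishing \eqref{eq:M_saving}, i.e. in exhibiting, for $K\notin\Z$, a block length $m_1$ and two paths in the associated graph whose weights fail to align in phase by a definite amount uniformly in $z$, which is where the hypothesis $K\notin\Z$ (as opposed to the $K\in\Z$ case handled in Proposition~\ref{Pro1}) must enter.
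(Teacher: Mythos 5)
Your proposal is correct and follows essentially the same route as the paper: iterate the matrix--vector recursion, split the base-$q$ digits of $d \bmod q^{L}$ into $\floor{L/m_1}$ blocks of length $m_1$ plus one short remainder block, apply \eqref{eq:M_saving} (uniformly in $z\in\U$) to each full block and the trivial bound $\norm{M^{j}_{\delta}(z)}_\infty\le q^{j}$ to the remainder, and convert $(1-\eta' q^{-m_1})^{\floor{L/m_1}}$ into $q^{-\eta L}$. The paper's proof is exactly this bookkeeping, so no further comment is needed.
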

\begin{proof}
  We first note that
  \begin{align*}
    \norm{M^{j}_{\delta}(z)}_{\infty} \leq q^{j}
  \end{align*}
  holds for all $z \in \U$, $j\in \N$ and $\delta <q^j$ by definition.\\
  Next we split the digital expansion of $d\bmod q^{L}$ - read from left to right - into $\floor{L/m_1}$ parts of length $m_1$ and possible one part of length $L\bmod m_1$.
  We denote the first parts by $\delta_1,\ldots,\delta_{\floor{L/m_1}}$ and the last part by $\delta_0$, i.e.,
  \begin{align*}
    d = q^{L \bmod m_1}\rb{\sum_{j=1}^{\floor{L/m_1}} \delta_j \cdot q^{\floor{L/m_1}-j}} + \delta_0.
  \end{align*}

  Thus we find
  \begin{align*}
    \max_{I\in \mathcal{I}_k} &\abs{G_{\lambda}^{I}(h,d)} = \norm{G_{\lambda}(h,d)}_{\infty}\\
      &\leq \frac{1}{q^{L}}\max_{z\in \U}\norm{M^{L}_{d}(z)}_{\infty} \cdot \norm{G_{\lambda-L}(h,\floor{d/q^{L}})}_{\infty}\\
      &\leq \frac{1}{q^{L}}\prod_{j=1}^{\floor{L/m_1}} \max_{z\in\U}\norm{M^{m_1}_{\delta_j}(z q^{m_1(j-1)})}_{\infty} \cdot q^{(L\bmod m_1)} \cdot \norm{G_{\lambda-L}(h,\floor{d/q^{L}})}_{\infty}\\
      &\leq \frac{1}{q^{L}}(q^{m_1}-\eta')^{\floor{L/m_1}} q^{(L\bmod m_1)} \cdot \norm{G_{\lambda-L}(h,\floor{d/q^{L}})}_{\infty}\\
      &\ll q^{-L\eta}\cdot \norm{G_{\lambda-L}(h,\floor{d/q^{L}})}_{\infty}
  \end{align*}
  where $\eta = \frac{\eta'}{q^{m_1}\log(q^{m_1})}>0$.
\end{proof}

Throughout the rest of this section, we aim to prove \eqref{eq:M_saving}.

Therefore, we try to find for each $I\in \mathcal{I}_{k}$ and $\delta<q^{m_1}$ a pair $(\varepsilon_1,\varepsilon_2)$ and $m_1'\leq m_1$ such that for all $z\in \U$ holds
\begin{align}\label{eq:goal_eps_12}
  \begin{split}
  &T_{\varepsilon_i,\delta}^{m_1'}(I) = T_{\varepsilon_i+1,\delta}^{m_1'}(I),\\
  &\abs{v^{m_1'}(I,\varepsilon_1,\delta) + z v^{m_1'}(I,\varepsilon_1+1,\delta)} + \abs{v^{m_1'}(I,\varepsilon_2,\delta) + z v^{m_1'}(I,\varepsilon_2+1,\delta)} \leq 4-\eta'.
  \end{split}
\end{align}
Let us assume for now that \eqref{eq:goal_eps_12} holds.
Indeed we find
  \begin{align*}
    \norm{M^{m_1'}_{\delta}(z)}_{\infty} &= \max_{I\in\mathcal{I}_{k}} \max_{z\in \U} \sum_{J\in\mathcal{I}_k} \abs{\sum_{\varepsilon<q^{m_1'}} \ind_{[T_{\varepsilon,\delta}^{m_1'}(I)=J]} z^{\varepsilon} v^{m_1'}(I,\varepsilon,\delta)}
  \end{align*}
  However, we find for each $I$ some $\varepsilon_1,\varepsilon_2$ fulfilling \eqref{eq:goal_eps_12}.
  This gives
  \begin{align*}
    &\max_{z\in \U} \sum_{J\in\mathcal{I}_k} \abs{\sum_{\varepsilon<q^{m_1'}} \ind_{[T_{\varepsilon,\delta}^{m_1'}(I)=J]} z^{\varepsilon} v^{m_1'}(I,\varepsilon,\delta)}\\
	    &\qquad \leq \rb{q^{m_1'}-4} + \sum_{i=1}^{2} \abs{\sum_{j=0}^{1}z^{\varepsilon_i+j} v^{m'_1}(I,\varepsilon_i+j,\delta) }\\
	    &\qquad \leq q^{m'_1} - \eta'.
  \end{align*}

  Thus, we find in total
  \begin{align*}
    \norm{M^{m_1}_{\delta}(z)}_{\infty} \leq q^{m_1-m'_1} (q^{m_1'}-\eta') \leq q^{m_1} - \eta'.
  \end{align*}
  
It just remains to find $\varepsilon_1,\varepsilon_2,m'_1$ fulfilling \eqref{eq:goal_eps_12} and this turns out to be a rather tricky task.

We fix now some arbitrary $I\in \mathcal{I}_{k}$ and $\delta \in \N$.
We start by defining for $0\leq x \leq (4m-2)k$ and $c \in \N$
\begin{align*}
    M_{x,c} = M_{x,(c\bmod q^{x})}:= \{\ell <k : \floor{i_{\ell}/q^{m-1}} + d \ell \equiv c (\bmod q^{x})\}
  \end{align*}
and show some basic properties of $M_{x,c}$.

\begin{lemma}\label{le:c_0}
  For every $x<q^{(4m-2)k}$ exists $c_0$ such that
  \begin{align*}
    \sum_{\ell \in M_{x,c_0}} \alpha_{\ell} \not \in \Z.
  \end{align*}
\end{lemma}
\begin{proof}
One finds easily that
  \begin{align*}
    \{0,\ldots,k-1\} = \bigcup_{c<q^{x}} M_{x,c},
  \end{align*}
  which means that $\{M_{x,c}: c<q^{x}\}$ is a partition of $\{0,\ldots,k-1\}$ for each $x$.
  Thus, we find for every $x$ 
  \begin{align*}
    \sum_{c} \sum_{\ell \in M_{x,c}} \alpha_{\ell} = \sum_{\ell<k} \alpha_{\ell} = K \not \in \Z
  \end{align*}
  and the proof follows easily.
\end{proof}

\begin{lemma}\label{le:x_0}
  Let $d<q^{(4m-2)k}$ and $I \in \mathcal{I}_k$.
  
  Then, there exists $0\leq x_0\leq (4m-2)(k-1)$ such that for each $c<q^{x_0}$ exists $c^{+} < q^{x_0+(4m-2)}$ such that
  \begin{align*}
      M_{x_0,c} = M_{x_0+(4m-2),c^{+}}.
  \end{align*}
\end{lemma}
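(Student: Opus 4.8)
The plan is to read the data $(M_{x,c})_{c<q^{x}}$ as a partition of $\{0,\ldots,k-1\}$ for each level $x\ge 0$ — namely, its blocks are the non-empty sets among the $M_{x,c}$ — and to exploit that this family of partitions, call it $(P_x)_{x\ge 0}$, becomes finer as $x$ grows while the number of blocks can never exceed $k$.

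First I would record the monotonicity. For $x'\ge x$, if $\floor{i_\ell/q^{m-1}}+d\ell\equiv c'\pmod{q^{x'}}$ then reducing modulo $q^{x}$ gives $\floor{i_\ell/q^{m-1}}+d\ell\equiv c'\pmod{q^{x}}$, so $M_{x',c'}\subseteq M_{x,\,c'\bmod q^{x}}$; hence $P_{x'}$ refines $P_{x}$, and in particular the number $a_x$ of blocks of $P_x$ is non-decreasing in $x$. Since every integer is $\equiv 0\pmod{q^{0}}$ we have $a_0=1$, and obviously $a_x\le k$ for every $x$. Therefore the non-decreasing integer sequence $(a_x)_{x\ge 0}$ increases strictly at most $k-1$ times; I will call a level $x\ge 1$ with $a_x>a_{x-1}$ a \emph{jump level}, and there are at most $k-1$ of them.

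Now I would run a pigeonhole argument over the $k$ pairwise disjoint windows
\[
  W_j=\{\,j(4m-2)+1,\ \ldots,\ (j+1)(4m-2)\,\},\qquad j=0,1,\ldots,k-1,
\]
which together exhaust $\{1,\ldots,k(4m-2)\}$. Since there are at most $k-1$ jump levels, some window $W_{j_0}$ contains none; I set $x_0=j_0(4m-2)$, so that $0\le x_0\le (4m-2)(k-1)$ and $a_{x_0}=a_{x_0+1}=\cdots=a_{x_0+(4m-2)}$. I would then invoke the elementary fact that a refinement of a finite partition having the same number of blocks must equal that partition, which gives $P_{x_0}=P_{x_0+(4m-2)}$. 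Finally, let $c<q^{x_0}$. If $M_{x_0,c}=\emptyset$, then $M_{x_0+(4m-2),c}\subseteq M_{x_0,c}=\emptyset$ and one takes $c^{+}=c<q^{x_0}\le q^{x_0+(4m-2)}$. If $M_{x_0,c}\ne\emptyset$, it is a block of $P_{x_0}=P_{x_0+(4m-2)}$, so picking any $\ell\in M_{x_0,c}$ and $c^{+}\equiv\floor{i_\ell/q^{m-1}}+d\ell\pmod{q^{x_0+(4m-2)}}$ with $c^{+}<q^{x_0+(4m-2)}$ gives $M_{x_0,c}=M_{x_0+(4m-2),c^{+}}$, as desired.

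The argument is purely combinatorial; the only point that needs attention is the quantitative bookkeeping that makes the pigeonhole step fit the required range — that $k$ disjoint windows of width $4m-2$ outnumber the at most $k-1$ jump levels, and that the resulting starting level $j_0(4m-2)$ never exceeds $(4m-2)(k-1)$. The hypothesis $d<q^{(4m-2)k}$ is not used here; it is stated only because that is the bound available in the later application.
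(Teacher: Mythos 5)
Your argument is correct and is essentially the paper's proof: both exploit that the partitions $\{M_{x,c}\}$ refine as $x$ grows and that a strictly refining chain of partitions of a $k$-element set has length at most $k$, then pigeonhole to find two equal consecutive partitions at levels $x_0$ and $x_0+(4m-2)$ with $x_0$ a multiple of $4m-2$ not exceeding $(4m-2)(k-1)$. Your bookkeeping via block counts and windows, and your explicit identification of $c^{+}$ (including the empty case), just spell out details the paper leaves implicit; your observation that the hypothesis $d<q^{(4m-2)k}$ is not needed is also accurate.
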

\begin{remark}
  This is equivalent to the statement that
  \begin{align*}
    \floor{i_{\ell_1}/q^{m-1}} + d \ell_1 \equiv \floor{i_{\ell_2}/q^{m-1}} + d \ell_2 (\bmod q^{x_0})
  \end{align*}
  implies
  \begin{align*}
    \floor{i_{\ell_1}/q^{m-1}} + d \ell_1 \equiv \floor{i_{\ell_2}/q^{m-1}} + d \ell_2 (\bmod q^{x_0+4m-2})
  \end{align*}
\end{remark}

\begin{proof}
  We have already seen that $\{M_{x,c}: c<q^{x}\}$ is a partition of $\{0,\ldots,k-1\}$.
  Furthermore, we find for $0\leq x\leq(4m-2)k$ and $c<q^{x}$ that
  \begin{align*}
    M_{x,c} = \bigcup_{c'<q^{4m-2}} M_{x+(4m-2),c+q^{x}c'}.
  \end{align*}
  This implies that $\{M_{x+4m-2,c}: c<q^{x+4m-2}\}$ is a refinement of $\{M_{x,c}: c<q^{x}\}$ and we find
  \begin{align*}
    &\{M_{(4m-2)\cdot 0,c}: c<1\} \geq \{M_{(4m-2)\cdot 1,c}: c<q^{4m-2}\}\\
		&\qquad \geq \ldots \geq \{M_{(4m-2)k,c}: c<q^{(4m-2)k}\}.
  \end{align*}
  It is well known that the maximal length of a chain in the set of partitions of $\{0,\ldots,k-1\}$ is $k$.
  This means that there exists $x'_0$ such that $\{M_{(4m-2)x'_0,c}: c<q^{(4m-2)x'_0}\} = \{M_{(4m-2)(x'_0+1),c'}: c'<q^{(4m-2)(x'_0+1)}\}$.
  \end{proof}
Furthermore, we define
\begin{align*}
  \beta_{x,c} := \sum_{\ell \in M_{x,c}} \alpha_{\ell}.
\end{align*}

We can now choose $m_1 := (4m-2)k$, $m'_1 := x_0+(4m-2)$ where $x_0$ is given by Lemma~\ref{le:x_0}.
We consider $c_0<q^{x_0}$ and $c^{+}_0$ provided by Lemma~\ref{le:c_0} and Lemma~\ref{le:x_0}
and know that $\beta_{x,c_0} \notin \Z$.
Therefore we apply Corollary~\ref{co:diff_b} and find $\bfe_1,\bfe_2<q^{2m-1}$ such that
\begin{align*}
  &b(q^{m-1}(\bfe_1 + 1)-1) - b(q^{m-1}(\bfe_1+1))\\
	&\qquad - b(q^{m-1}(\bfe_2 + 1)-1) + b(q^{m-1}(\bfe_2+1)) = d
\end{align*}
and $d\beta_{x,c_0}\notin \Z$.

We are now able to define
\begin{align*}
  \varepsilon_1 = (q^{x_0+m-1}(\bfe_1+1)-c_0^{+}-1)\bmod q^{x_0+4m-2}\\
  \varepsilon_2 = (q^{x_0+m-1}(\bfe_2+1)-c_0^{+}-1)\bmod q^{x_0+4m-2}.
\end{align*}

It just remains to check \eqref{eq:goal_eps_12} which we split up into the following two lemmata.
\begin{lemma}
  Let $x_0,\varepsilon_i$ be defined as above.
  Then
  \begin{align*}
    T_{\varepsilon_i,d}^{x_0+4m-2}(I) = T_{\varepsilon_i+1,d}^{x_0+4m-2}(I)
  \end{align*}
  holds.
\end{lemma}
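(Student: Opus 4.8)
The plan is to unwind the definition of $T^{x_0+4m-2}_{\,\cdot\,,d}(I)$ coordinate by coordinate and reduce the asserted identity to one congruence per index $\ell$. Write $j=x_0+4m-2$ and $j'=x_0+3m-1$, so $q^{j}=q^{m-1}q^{j'}$, and for each $\ell$ split $i_\ell=q^{m-1}a_\ell+r_\ell$ with $a_\ell=\floor{i_\ell/q^{m-1}}$ and $0\le r_\ell<q^{m-1}$. Since $T^{j}$ only sees $d$ modulo $q^{j}$, and every modulus appearing below divides $q^{j}$, one may treat $d$ and $d\bmod q^{j}$ as interchangeable in what follows. Then
\begin{align*}
  T^{j}_{\varepsilon_i,d}(I)_\ell=\floor{\frac{i_\ell+q^{m-1}(\varepsilon_i+\ell d)}{q^{j}}}
  =\floor{\frac{a_\ell+\ell d+\varepsilon_i}{q^{j'}}+\frac{r_\ell/q^{m-1}}{q^{j'}}}
  =\floor{\frac{a_\ell+\ell d+\varepsilon_i}{q^{j'}}},
\end{align*}
the last step because $0\le r_\ell/q^{m-1}<1$ and $\floor{(N+g)/M}=\floor{N/M}$ whenever $N\in\Z$, $M\in\N_{>0}$ and $0\le g<1$. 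The same holds with $\varepsilon_i$ replaced by $\varepsilon_i+1$. Hence $T^{j}_{\varepsilon_i,d}(I)=T^{j}_{\varepsilon_i+1,d}(I)$ is equivalent to
\begin{align*}
  a_\ell+\ell d+\varepsilon_i\not\equiv-1\pmod{q^{j'}}\qquad\text{for every }\ell<k,
\end{align*}
because $\floor{y/q^{j'}}\ne\floor{(y+1)/q^{j'}}$ happens precisely when $y\equiv-1\pmod{q^{j'}}$.

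To check this congruence I would split on whether $c(\ell):=(a_\ell+\ell d)\bmod q^{x_0}$ (the unique class with $\ell\in M_{x_0,c(\ell)}$) equals $c_0$ or not — exactly the dichotomy around which $\varepsilon_i$ was built. First reduce modulo $q^{x_0}$: from $q^{x_0+m-1}(\bfe_i+1)\equiv0\pmod{q^{x_0}}$ we get $\varepsilon_i\equiv-c_0^{+}-1\pmod{q^{x_0}}$, and $c_0^{+}\equiv c_0\pmod{q^{x_0}}$ because $M_{x_0,c_0}=M_{x_0+4m-2,c_0^{+}}$ is nonempty (nonempty since $\beta_{x_0,c_0}\notin\Z$) and any common element $\ell_0$ satisfies $a_{\ell_0}+\ell_0 d\equiv c_0\pmod{q^{x_0}}$ and $a_{\ell_0}+\ell_0 d\equiv c_0^{+}\pmod{q^{x_0+4m-2}}$ at once. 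Thus $a_\ell+\ell d+\varepsilon_i\equiv c(\ell)-c_0-1\pmod{q^{x_0}}$. If $c(\ell)\ne c_0$ this is $\not\equiv-1\pmod{q^{x_0}}$, hence a fortiori $\not\equiv-1\pmod{q^{j'}}$, and that index is done.

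If instead $c(\ell)=c_0$, then $\ell\in M_{x_0,c_0}=M_{x_0+4m-2,c_0^{+}}$, so $a_\ell+\ell d\equiv c_0^{+}\pmod{q^{x_0+4m-2}}$ and in particular modulo $q^{j'}$. Therefore
\begin{align*}
  a_\ell+\ell d+\varepsilon_i\equiv c_0^{+}+\rb{q^{x_0+m-1}(\bfe_i+1)-c_0^{+}-1}=q^{x_0+m-1}(\bfe_i+1)-1\pmod{q^{j'}},
\end{align*}
which is $\equiv-1\pmod{q^{j'}}$ only if $q^{j'}=q^{x_0+m-1}\cdot q^{2m}$ divides $q^{x_0+m-1}(\bfe_i+1)$, i.e. only if $q^{2m}\mid\bfe_i+1$; but $1\le\bfe_i+1\le q^{2m-1}<q^{2m}$, so this cannot occur. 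Hence the congruence holds for every $\ell$ and the lemma follows.

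The computation itself is short; the point that requires care is the exponent bookkeeping — that the leftover fraction $r_\ell/q^{m-1}$ has modulus $<1$ so it does not disturb the outer floor (this is exactly why the block length $4m-2$, rather than something shorter, is used), that $q^{x_0}\mid q^{j'}\mid q^{x_0+4m-2}$ so the two reductions are compatible, and that $c_0^{+}\equiv c_0\pmod{q^{x_0}}$. One should also note in passing that $\varepsilon_i$ and $\varepsilon_i+1$ both lie in $\{0,\dots,q^{x_0+4m-2}-1\}$, so that they are genuine distinct digit-strings of length $x_0+4m-2$; this is what later lets the lemma be combined with its companion to yield the bound $\norm{M^{m_1}_\delta(z)}_\infty\le q^{m_1}-\eta'$.
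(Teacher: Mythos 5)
Your proof is correct and follows essentially the same route as the paper's: reduce each coordinate's floor to $\floor{(\floor{i_\ell/q^{m-1}}+\ell d+\varepsilon_i)/q^{x_0+3m-1}}$, observe that equality of the two floors fails only on the congruence class $-1 \bmod q^{x_0+3m-1}$, and rule that out by splitting on whether $\ell\in M_{x_0,c_0}$, using $\bfe_i<q^{2m-1}$ in the critical case. You in fact supply two details the paper leaves implicit (the justification of the outer-floor reduction and of $c_0^{+}\equiv c_0 \pmod{q^{x_0}}$); only your closing aside that $\varepsilon_i+1$ necessarily stays below $q^{x_0+4m-2}$ is not fully argued, but that concerns the lemma's later use, not its proof.
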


\begin{proof}
We need to show that
\begin{align}\label{eq:equal_TI}
  \floor{\frac{i_{\ell} + q^{m-1}(\ell d + \varepsilon_i)}{q^{x_0+4m-2}}} = \floor{\frac{i_{\ell} + q^{m-1}(\ell d + \varepsilon_i +1)}{q^{x_0+4m-2}}}
\end{align}
holds for all $\ell<k$ and $i = 1,2$. We know that $\ell$ belongs to $M_{x_0+4m-2,c^{+}}$ for some $c<q^{x_0}$.
Thus, we find for $j=0,1$
\begin{align*}
  \floor{\frac{i_{\ell} + q^{m-1}(\ell d + \varepsilon_i +j)}{q^{x_0+4m-2}}} &= \floor{\frac{(i_{\ell} \bmod q^{m-1}) + q^{m-1}(c^{+} + \varepsilon_i +j)}{q^{x_0+4m-2}}}\\
      &= \floor{\frac{c^{+} + \varepsilon_i+j}{q^{x_0+3m-1}}}
\end{align*}

Therefore, \eqref{eq:equal_TI} does hold, unless
\begin{align*}
  c^{+} + \varepsilon_i +1 \equiv 0 (\bmod q^{x_0+3m-1}).
\end{align*}
We find
\begin{align*}
  c^{+} + \varepsilon_i +1 \equiv c^{+} + q^{x_0+m-1}(\bfe_i+1)-c_0^{+} (\bmod q^{x_0+3m-1}).
\end{align*}
We first consider the case $c\neq c_0$: 
\begin{align*}
  c^{+} + \varepsilon_i +1 \equiv c - c_0 \not \equiv 0 (\bmod q^{x_0})
\end{align*}
For $c = c_0$:
\begin{align*}
    c_0^{+} + \varepsilon_i +1 \equiv q^{x_0+m-1}(\bfe_i+1) (\bmod q^{x_0+3m-1})
\end{align*}
However
\begin{align*}
  \bfe_i+1 \not \equiv 0 (\bmod q^{2m})
\end{align*}
as $\bfe_i <q^{2m-1}$.
Thus, \eqref{eq:equal_TI} holds.
\end{proof}

\begin{lemma}
    There exists $\eta'>0$ only depending on $m'$ such that for $x_0$ and $\varepsilon_i$ defined as above holds
    \begin{align}\label{eq:eps_saving}
      \sum_{i=1}^{2}\abs{v^{x_0+4m-2}(I,\varepsilon_i,\delta) + z \cdot v^{x_0+4m-2}(I,\varepsilon_i+1,\delta)} \leq 4-\eta'
    \end{align}
    for all $z\in \U$.
\end{lemma}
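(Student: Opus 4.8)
The plan is to derive \eqref{eq:eps_saving} from Lemma~\ref{le:sum_4}, the key point being that the relevant relative phase is exactly the quantity controlled by Corollary~\ref{co:diff_b}. Write $J:=x_0+4m-2$ and, for $\varepsilon\in\N$, $v(\varepsilon):=v^{J}(I,\varepsilon,\delta)=\e\rb{\sum_{\ell<k}\alpha_\ell\, b_{J}(i_\ell+q^{m-1}(\varepsilon+\ell\delta))}$; recall also the integer $d$ produced by Corollary~\ref{co:diff_b} in the construction above, which satisfies $d\beta_{x_0,c_0}\notin\Z$. For $i\in\{1,2\}$ set $v(\varepsilon_i)=\e(a_i)$ and $v(\varepsilon_i+1)=\e(a_i+\theta_i)$, so
\[
 \theta_i=\sum_{\ell<k}\alpha_\ell\rb{b_{J}(i_\ell+q^{m-1}(\varepsilon_i+1+\ell\delta))-b_{J}(i_\ell+q^{m-1}(\varepsilon_i+\ell\delta))},
\]
and write $z=\e(\zeta)$. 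Then $\abs{v(\varepsilon_i)+z\,v(\varepsilon_i+1)}=\abs{\e(a_i)+\e(a_i+(\zeta+\theta_i))}$, and Lemma~\ref{le:sum_4} with $x_i=a_i$, $\xi_i=\zeta+\theta_i$ gives
\[
 \sum_{i=1}^{2}\abs{v(\varepsilon_i)+z\,v(\varepsilon_i+1)}\;\le\;4-8\rb{\sin\rb{\frac{\pi\norm{\theta_1-\theta_2}}{4}}}^{2},
\]
because $\xi_1-\xi_2=\theta_1-\theta_2$, so the dependence on $z$ has disappeared. Since every element of $\frac1{m'}\Z\setminus\Z$ has distance between $\frac1{m'}$ and $\frac12$ from $\Z$ and $\sin^2$ increases on $[0,\tfrac\pi2]$, it therefore suffices to prove $\theta_1-\theta_2\equiv -d\,\beta_{x_0,c_0}\pmod 1$; then $\theta_1-\theta_2\in\frac1{m'}\Z\setminus\Z$ by Corollary~\ref{co:diff_b} (note $\beta_{x_0,c_0}\in\frac1{m'}\Z$), hence $\norm{\theta_1-\theta_2}\ge\frac1{m'}$, and the lemma follows with $\eta':=8\rb{\sin\rb{\pi/(4m')}}^{2}>0$.

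To prove this, put
\[
 D_\ell:=b_{J}(i_\ell+q^{m-1}(\varepsilon_1+1+\ell\delta))-b_{J}(i_\ell+q^{m-1}(\varepsilon_1+\ell\delta))-b_{J}(i_\ell+q^{m-1}(\varepsilon_2+1+\ell\delta))+b_{J}(i_\ell+q^{m-1}(\varepsilon_2+\ell\delta)),
\]
so $\theta_1-\theta_2=\sum_{\ell<k}\alpha_\ell D_\ell$; the goal is $D_\ell=0$ for $\ell\notin M_{x_0,c_0}$ and $D_\ell=-d$ for $\ell\in M_{x_0,c_0}$, which yields $\sum_\ell\alpha_\ell D_\ell=-d\sum_{\ell\in M_{x_0,c_0}}\alpha_\ell=-d\,\beta_{x_0,c_0}$. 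Fix $\ell$ and pick $c<q^{x_0}$ with $\ell\in M_{x_0,c}=M_{x_0+4m-2,c^{+}}$, i.e.\ $\floor{i_\ell/q^{m-1}}+\ell\delta\equiv c^{+}\pmod{q^{x_0+4m-2}}$. Writing $i_\ell=q^{m-1}\floor{i_\ell/q^{m-1}}+r_\ell$ with $0\le r_\ell<q^{m-1}$ and using that $b_{J}$ is $q^{J+m-1}=q^{x_0+5m-3}$-periodic, each term of $D_\ell$ reduces to $b_{J}\rb{r_\ell+q^{m-1}(c^{+}+\varepsilon_i+\nu)}$ for the matching $i\in\{1,2\}$, $\nu\in\{0,1\}$. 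Split $b_{J}$ at scale $m-1$ by Lemma~\ref{le:rec_b}: $b_{J}(r_\ell+q^{m-1}N)=b_{J-m+1}(N)+b_{m-1}(r_\ell+q^{m-1}N)$, and note that $b_{m-1}(r_\ell+q^{m-1}N)$ depends only on $r_\ell$ and on $N\bmod q^{m-1}$. If $c\ne c_0$, then $c^{+}+\varepsilon_i+1\equiv c-c_0\not\equiv0\pmod{q^{x_0}}$ (as in the proof of the previous lemma), so $N_i:=c^{+}+\varepsilon_i$ has $N_i\bmod q^{x_0}\le q^{x_0}-2$ and the carry of $N_i\mapsto N_i+1$ stays in the bottom $x_0$ digits; moreover $N_1-N_2=\varepsilon_1-\varepsilon_2\equiv q^{x_0+m-1}(\bfe_1-\bfe_2)\equiv0\pmod{q^{x_0+m-1}}$, so $N_1$ and $N_2$ agree in their bottom $x_0+m-1$ digits. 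Hence $b_{J-m+1}(N_i+1)-b_{J-m+1}(N_i)$ and $b_{m-1}(r_\ell+q^{m-1}(N_i+1))-b_{m-1}(r_\ell+q^{m-1}N_i)$ are both independent of $i$, so $D_\ell=0$. If $c=c_0$, the definition of $\varepsilon_i$ gives $c_0^{+}+\varepsilon_i+1\equiv q^{x_0+m-1}(\bfe_i+1)\pmod{q^{x_0+4m-2}}$, so modulo $q^{x_0+5m-3}$ the two arguments become $r_\ell+q^{x_0+2m-2}(\bfe_i+1)$ (for $\nu=1$) and $r_\ell+q^{x_0+2m-2}(\bfe_i+1)-q^{m-1}$ (for $\nu=0$); moreover $(c_0^{+}+\varepsilon_i+\nu)\bmod q^{m-1}$ equals $q^{m-1}-1$ or $0$, independently of $i$, so the $b_{m-1}$-parts cancel in $D_\ell$. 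For the $b_{J-m+1}$-parts one uses $J-m+1=x_0+3m-3$, a further application of Lemma~\ref{le:rec_b}, and property~\eqref{property_b'2} (which kills the long runs of identical $F$-windows over the all-zero resp.\ all-$(q-1)$ blocks of digits) to reduce $b_{J-m+1}(q^{x_0+m-1}(\bfe_i+1))$ and $b_{J-m+1}(q^{x_0+m-1}(\bfe_i+1)-1)$, up to terms that cancel in the double difference over $i$, to $b(q^{m-1}(\bfe_i+1))$ and $b(q^{m-1}(\bfe_i+1)-1)$ respectively; the resulting double difference is exactly the integer $d$ of Corollary~\ref{co:diff_b}, so $D_\ell=-d$.

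The heart of the argument --- and the step I expect to be by far the most delicate --- is this last evaluation of $D_\ell$ in the block $c=c_0$: one must follow carefully how the increment $\varepsilon_i\mapsto\varepsilon_i+1$ propagates through the base-$q$ digits of $i_\ell+q^{m-1}(\varepsilon_i+\ell\delta)$, which $F$-windows it touches, and how this interacts with the truncation level $J$, the period $q^{J+m-1}$ of $b_{J}$, and the bound $\bfe_i<q^{2m-1}$, so as to be sure that after the double difference over $i$ the $r_\ell$-dependent contributions, the long $(q-1)$-runs (eliminated by~\eqref{property_b'2}), and any other parasitic terms genuinely cancel and only $-d$ survives. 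Everything else --- the vanishing of the $z$-dependence, the application of Lemma~\ref{le:sum_4}, and the step from $d\beta_{x_0,c_0}\notin\Z$ to $\norm{\theta_1-\theta_2}\ge1/m'$ --- is routine.
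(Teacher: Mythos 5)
Your proposal is correct and follows essentially the same route as the paper: apply Lemma~\ref{le:sum_4} so that the $z$-dependence cancels in $\xi_1-\xi_2$, reduce $\theta_1-\theta_2$ to $\pm d\,\beta_{x_0,c_0}$ by splitting $b_{x_0+4m-2}$ at scales $m-1$ and $x_0$ and using the class structure from Lemma~\ref{le:x_0}, and conclude via Corollary~\ref{co:diff_b} with the same $\eta'=8\left(\sin\left(\pi/(4m')\right)\right)^2$. Only minor blemishes: $J-m+1=x_0+3m-1$ (not $x_0+3m-3$), and in the $c=c_0$ block the long $(q-1)$-runs are harmless not because of \eqref{property_b'2} but because their $b_{x_0}$-contribution is the $i$-independent constant $x_0\,F(q^m-1)$ --- precisely one of the ``terms that cancel in the double difference over $i$'' that you anticipate.
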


\begin{proof}
We start by computing the weights $v^{x_0+4m-2}(I,\varepsilon_i+j,\delta)$.
For arbitrary $\varepsilon < q^{\lambda_0+4m-2}$, we find:
\begin{align*}
  v^{x_0+4m-2}&(I,\varepsilon,d) = \prod_{\ell<k} \e(\alpha_{\ell} b_{x_0+4m-2}(i_{\ell} + q^{m-1}(\varepsilon + \ell d)))\\
    &= \prod_{\ell<k} \e(\alpha_{\ell} b_{m-1}(i_{\ell} + q^{m-1}(\varepsilon + \ell d))) \e\rb{\alpha_{\ell} b_{x_0+3m-1}\rb{\floor{i_{\ell}/q^{m-1}} + \varepsilon + \ell d}}\\
    &= \e(g(\varepsilon)) \cdot \prod_{\ell<k} \e\rb{\alpha_{\ell} b_{x_0+3m-1}\rb{\floor{i_{\ell}/q^{m-1}} + \varepsilon + \ell d}}.
\end{align*}
where
\begin{align*}
  g(\varepsilon) := \sum_{\ell<k} \alpha_{\ell} b_{m-1}(i_{\ell} + q^{m-1}(\varepsilon + \ell d)).
\end{align*}
Note that $g(\varepsilon)$ only depends on $\varepsilon \bmod q^{m-1}$.

We can describe this product by using the weights $\beta$ defined above.
\begin{align*}
  v^{x_0+4m-2}(I,\varepsilon,d) &= \e(g(\varepsilon)) \cdot \prod_{c'<q^{x_0+4m-2}} \e\rb{\beta_{x_0+4m-2,c'} \cdot b_{x_0+3m-1} \rb{c' + \varepsilon}}.
\end{align*}

Furthermore, we can rewrite every $c'<q^{x_0+4m-2}$ for which $\beta_{x_0+4m-2,c'} \not = 0$ as some $c^{+}$ where $c<q^{x_0}$.
This gives then
\begin{align*}
   v^{x_0+4m-2}&(I,\varepsilon,d) = \e(g(\varepsilon)) \cdot \prod_{c<q^{x_0}} \e\rb{\beta_{x_0,c} \cdot b_{x_0+3m-1} \rb{c^{+} + \varepsilon}}\\
    &= \e(g(\varepsilon)) \cdot \prod_{c<q^{x_0}} \e\rb{\beta_{x_0,c} \cdot b_{x_0}(c^{+} + \varepsilon)} \cdot \prod_{c<q^{x_0}} \e\rb{\beta_{x_0,c} \cdot b_{3m-1}\rb{\floor{\frac{c^{+}+\varepsilon}{q^{x_0}}}}}
\end{align*}

Thus we find for $\varepsilon = \varepsilon_i+j$ that:
\begin{align*}
  v^{x_0+4m-2}&(I,\varepsilon_i+j,d) = \e(g(\varepsilon_i+j)) \cdot \prod_{c<q^{x_0}} \e\rb{\beta_{x_0,c} \cdot b_{x_0}(c^{+} + \varepsilon_i+j)}\\
	  &\qquad \cdot \prod_{c<q^{x_0}} \e\rb{\beta_{x_0,c} \cdot b_{3m-1}\rb{\floor{\frac{c^{+}+\varepsilon_i+j}{q^{x_0}}}}}\\
      &= \e(g(-c_0^{+}-1+j)) \cdot \prod_{c<q^{x_0}} \e\rb{\beta_{x_0,c} \cdot b_{x_0}(c^{+} -c_0^{+}-1+j)} \\
	  &\qquad \cdot \prod_{c<q^{x_0}} \e\rb{\beta_{x_0,c} \cdot b_{3m-1}\rb{q^{m-1}(\bfe_i+1)+\floor{\frac{c^{+}-c_0^{+}-1+j}{q^{x_0}}}}}\\
      &= \e(g(-c_0^{+}-1+j)) \cdot \prod_{c<q^{x_0}} \e\rb{\beta_{x_0,c} \cdot b_{x_0}(c^{+} -c_0^{+}-1+j)} \\
	  &\qquad \cdot \prod_{\substack{c<q^{x_0}\\c\neq c_0}} \e\rb{\beta_{x_0,c} \cdot b_{3m-1}\rb{q^{m-1}(\bfe_i+1)+\floor{\frac{c^{+}-c_0^{+}-1+j}{q^{x_0}}}}}\\
	  &\qquad \cdot \e\rb{\beta_{x_0,c_0} \cdot b_{3m-1}(q^{m-1}(\bfe_i+1)-1+j)}.
\end{align*}

For $c \neq c_0$, we find 
\begin{align*}
  \floor{\frac{c^{+}-c_0^{+}-1}{q^{x_0}}} = \floor{\frac{c^{+}-c_0^{+}}{q^{x_0}}}
\end{align*}
as $c^{+} \equiv c \not \equiv c_0 \equiv c_0^{+} \mod q^{x_0}$.

Consequently, we find
\begin{align*}
  &v^{x_0+4m-2}(I,\varepsilon_i,d) = \e(x_i)\\
  &v^{x_0+4m-2}(I,\varepsilon_i+1,d) = \e(x_i+\xi_i)
\end{align*}
where
\begin{align*}
  x_i &= g(-c_0^{+}-1) + \sum_{c<q^{x_0}} \beta_{x_0,c} \cdot b_{x_0}(c^{+} -c_0^{+}-1) \\
	  &\qquad + \sum_{\substack{c<q^{x_0}\\c\neq c_0}} \beta_{x_0,c} \cdot b_{3m-1}\rb{q^{m-1}(\bfe_i+1)+\floor{\frac{c^{+}-c_0^{+}}{q^{x_0}}}}\\
	  &\qquad + \beta_{x_0,c_0} \cdot b_{3m-1}(q^{m-1}(\bfe_i+1)-1)
\end{align*}
and
\begin{align*}
  \xi_i &= g(-c_0^{+})  + \sum_{c<q^{x_0}} \beta_{x_0,c} \cdot b_{x_0}(c^{+} -c_0^{+})  + \beta_{x_0,c_0} \cdot b_{3m-1}(q^{m-1}(\bfe_i+1))\\
      & - g(-c_0^{+}-1)- \sum_{c<q^{x_0}} \beta_{x_0,c} \cdot b_{x_0}(c^{+} -c_0^{+}-1) - \beta_{x_0,c_0} \cdot b_{3m-1}(q^{m-1}(\bfe_i+1)-1).
\end{align*}
Also, we find
\begin{align*}
  \xi_1-\xi_2 = \beta_{x_0,c_0}d \notin \Z,
\end{align*}
where
\begin{align*}
  &b(q^{m-1}(\bfe_1+1)) - b(q^{m-1}(\bfe_1+1)-1)\\
	&\qquad -b(q^{m-1}(\bfe_2+1)) + b(q^{m-1}(\bfe_2+1)-1)=d.
\end{align*}

This implies
\begin{align*}
  \norm{\xi_1-\xi_2} \geq \frac{1}{m'}.
\end{align*}

It remains to apply Lemma~\ref{le:sum_4} to find that \eqref{eq:eps_saving} holds with $\eta' = 8 \rb{\sin\rb{\frac{\pi}{4m'}}}^2$.
\end{proof}

At the end of this section, we recall the important steps of the proof of Proposition~\ref{Pro2_new}.\\
We started to rewrite our recursion for $G_{\lambda}^{I}$ into a matrix vector multiplication
\begin{align*}
    G_{\lambda}(h,q^Ld+\delta) = \frac{1}{q^L} M^{L}_{\delta}\rb{\e\rb{-\frac{h}{q^{\lambda}}}}G_{\lambda-L}\rb{h,d}.
\end{align*}
We then split up this matrix $M^{L}_{\delta}(.)$ into a product of many matrices $M^{m_1}_{\delta_j}(.)$, where $m_1 = (4m-2)k$.
Thereafter, we showed that $\norm{M^{m_1}_{\delta_j}(.)}\leq q^{m_1}-\eta$, where $\eta = 8 \rb{\sin\rb{\frac{\pi}{4m'}}}^2$.
This implies then Proposition~\ref{Pro2_new}.\\
To show that $\norm{M^{m_1}_{\delta_j}}\leq q^{m_1}-\eta$, we found two different $\varepsilon_i$ such that
\begin{align*}
  &T_{\varepsilon_i,\delta}^{m_1'}(I) = T_{\varepsilon_i+1,\delta}^{m_1'}(I),\\
  &\abs{v^{m_1'}(I,\varepsilon_1,\delta) + z v^{m_1'}(I,\varepsilon_1+1,\delta)} + \abs{v^{m_1'}(I,\varepsilon_2,\delta) + z v^{m_1'}(I,\varepsilon_2+1,\delta)} \leq 4-\eta'
\end{align*}
holds for all $z\in \U$.

\section{Proof of the Main Theorem} \label{cha:proof}
In this section, we complete the proof of Theorem \ref{Thexponentialsums} following the ideas and structure of \cite{drmotaMauduitRivat2014}. 
% We use Proposition \ref{Pro1} for the cases $K \equiv 0 \bmod 1$ and Proposition \ref{Pro2} for the case $K \not \equiv 0 \bmod 1$. 
As the proof is very similar, we only outline it briefly and comment on the important changes.

The structure of the proof is similar for both cases:
At first we want to substitute the function $b$ by $b_{\mu,\lambda}$. This can be done by applying Lemma 
\ref{Lecarry0} and Lemma \ref{lemma:van-der-corput} in the case $K \in \Z$. 
For the case $K \notin \Z$ we have to use Lemma \ref{lemma:van-der-corput} first.

Thereafter, we apply Lemma \ref{Lecarry1} to detect the digits between $\mu$ and $\lambda$. 
Next, we use characteristic functions to detect suitable values for $u_1(n), u_2(n), u_3(n)$. 
Lemma \ref{lemma:better-koksma1} allows us to replace the characteristic functions by exponential sums. 
We split the remaining exponential sum into a quadratic and a linear part and find that the quadratic part is negligibly small. 
For the remaining sum, we apply Proposition \ref{Pro1} or \ref{Pro2}~-- depending on whether $K \in \Z$. 
The case $K \notin \Z$ needs more effort to deal with.
\subsection{\texorpdfstring{The case $K \in \Z$}{The case K in Z}} \label{sec:equiv0}

In this section, we show that, if $K = \alpha_0 + \cdots +
\alpha_{k-1} \in \Z$, 
Proposition \ref{Pro1} 
provides an upper bound for the sum
\begin{displaymath}
S_0 = \sum_{n < N} \e\rb{ \sum_{\ell=0}^{k-1}  \alpha_\ell b((n+\ell)^2) }.
\end{displaymath}

Let $\nu$ be the unique integer such that $q^{\nu-1} < N \leq q^{\nu}$
and we choose all appearing exponents - i.e. $\lambda,\mu,\rho$, etc. - as in \cite{drmotaMauduitRivat2014}.

By using Lemma~\ref{Lecarry0}, and the same arguments as in \cite{drmotaMauduitRivat2014}, we find
\begin{align}\label{eq:S0-S1-even}
  S_0= S_1 + \mathcal{O}\rb{q^{\nu - (\lambda-\nu)}},
\end{align}
where
\begin{displaymath}
S_1 = \sum_{n< N} \e\rb{ \sum_{\ell=0}^{k-1}  \alpha_\ell 
b_\lambda((n+\ell)^2) }.
\end{displaymath} 

Now we use Lemma \ref{lemma:van-der-corput} - with $Q= q^{\mu+m-1}$ and $S = q^{\nu-\mu}$ - to relate $S_1$ to a sum in terms of $b_{\mu,\lambda}$:

\begin{align}\label{eq:S1-S2-even}
  |S_1|^2 \ll \frac {N^2}S + \frac NS \Re(S_2),
\end{align}
where
\begin{align*}
S_2 = \sum_{1\le s < S} \rb{ 1 - \frac sS } S_2'(s)
\end{align*}
and
\begin{align*}
S_2'(s) = 
\sum_{n\in I(N,s)} \e\rb{ \sum_{\ell=0}^{k-1} \alpha_\ell 
(b_{\mu,\lambda}((n+\ell)^2)
- b_{\mu,\lambda}((n+\ell+sq^{\mu+m-1})^2))},
\end{align*}
where 
$I(N,s)$ is an interval included in $[0,N-1]$ (which we do not specify).

Next we use Lemma \ref{Lecarry1} to detect the digits of $(n+\ell)^2$ and $(n+\ell + sq^{m-1}q^{\mu})^2$ between $\mu$ and $\lambda+m-1$ - with a negligible error term.
Therefore, we have to take the digits between 
$\mu' = \mu-\rho'$ and $\mu$ into account, where $\rho'>0$ will be chosen later.

We set the integers $u_1=u_1(n)$, $u_3=u_3(n)$, $v=v(n)$, $w_1=w_1(n)$,
and $w_3=w_3(n)$ to satisfy the conditions of Lemma~\ref{Lecarry1} and detect them by characteristic functions.
Thus, we find
\begin{align}\label{eq:S'2-S'3-even}
  S_2'(s) = S_3'(s) + \mathcal{O}(q^{\nu-\rho'}),
\end{align}
where
\begin{small}
\begin{align*}
  S_3'(s) &= \sum_{0\le u_1 < U_1} \sum_{0\le u_3 < U_3} \sum_{n\in I(N,s)} \left(\chi_{q^{\mu'-\lambda-m+1}}
      \rb{\frac{n^2}{q^{\lambda+m-1}}-\frac{u_1}{U_1}} \chi_{q^{\mu'-\nu-1}} \rb{\frac{2n}{q^{\nu+1}}-\frac{u_3}{U_3}} \right.\\
    &\left. \cdot \e\rb{ \sum_{\ell=0}^{k-1} \alpha_\ell (b_{\rho',\lambda-\mu+\rho'}(u_1+\ell u_3)
      - b_{\rho',\lambda-\mu+\rho'}(u_1+\ell u_3+ v(n) q^{\rho'} + 2 \ell s q^{m-1}q^{\rho'})}\right),
\end{align*}
\end{small}
where $\chi_{\alpha}$ is defined by \eqref{eq:definition-chi} and $U_1 = q^{\lambda+m-1-\mu'}, U_3 = q^{\nu-\mu'+1}$.
Lemma \ref{lemma:better-koksma1} allows us to replace the 
characteristic functions $\chi$  by trigonometric polynomials.
More precisely, using \eqref{eqS-S}
with
$H_1 = U_1 q^{\rho''}$ and $H_3 = U_3 q^{\rho''}$
for some suitable $\rho'' > 0$ (which is a fraction of $\nu$ chosen later),
we have
\begin{align}\label{eq:S'3-S4}
  S'_3(s) = S_4(s)  + \mathcal{O}(E_1) + \mathcal{O}(E_3) + \mathcal{O}(E_{1,3}),
\end{align}
where $E_1, E_3$ and $E_{1,3}$ are the error terms specified in \eqref{eqS-S} and 
\begin{align*}
  S_4(s&) = \sum_{0\le u_1 < U_1} 
\sum_{0\le u_3 < U_3} \sum_{0\le v < q^{\lambda-\mu+m-1} } \\
  &\sum_{n\in I(N,s)} \left ( A_{U_1^{-1},H_1} \rb{\frac{n^2}{q^{\lambda+m-1}}-\frac{u_1}{U_1}} A_{U_3^{-1},H_3} \rb{\frac{2n}{q^{\nu+1}}-\frac{u_3}{U_3}} \right . \\
    &\cdot \e\rb{ \sum_{\ell=0}^{k-1} \alpha_\ell (b_{\rho',\lambda-\mu+\rho'}(u_1+\ell u_3)
      - b_{\rho',\lambda-\mu+\rho'}(u_1+\ell u_3+ v q^{\rho'} + 2 \ell s q^{m-1} q^{\rho'}))}\\
  &\cdot \left . \frac 1{q^{\lambda-\mu+m-1}} \sum_{0\le h < q^{\lambda-\mu+m-1}} \e\rb{ h\frac{2sq^{m-1}n-v}{q^{\lambda-\mu+m-1}}}\right),
\end{align*}
where we use the last sum to detect the correct value of $v = v(n)$.
 
The error terms $E_1$, $E_3$, $E_{1,3}$ can easily be 
estimated with the help of Lemma~\ref{lemma:incomplete-gauss-sum}, just as in \cite{drmotaMauduitRivat2014}.

By using the representations of $A_{U_1^{-1},H_1}$ and
$A_{U_3^{-1},H_3}$, we obtain
\begin{align*}
  S_4(s&) = \frac 1{q^{\lambda-\mu+m-1}} \sum_{|h_1| \le H_1} \sum_{|h_3| \le H_3} \sum_{0\le h < q^{\lambda-\mu+m-1} } a_{h_1}(U_1^{-1},H_1)\,a_{h_3}(U_3^{-1},H_3) \\
    & \sum_{0\le u_1 < U_1} \sum_{0\le u_3 < U_3} \sum_{0\le v < q^{\lambda-\mu+m-1}} \e\Biggl(- \frac{h_1u_1}{U_1} - \frac{h_3u_3}{U_3} - \frac{hv}{q^{\lambda-\mu+m-1}} \Biggr) \\
    & \e\Biggl( \sum_{\ell=0}^{k-1} \alpha_\ell (b_{\rho',\lambda-\mu+\rho'}(u_1 + \ell u_3)
      - b_{\rho',\lambda-\mu+\rho'}(u_1 + \ell u_3+ v q^{\rho'} +2 \ell s q^{m-1} q^{\rho'})) \Biggr) \\
    & \cdot \sum_{n} \e\rb{ \frac{h_1n^2}{q^{\lambda+m-1}} +\frac{h_3n}{q^\nu} + \frac{2hsn}{q^{\lambda-\mu}}}.
\end{align*}
We now distinguish the cases $h_1 = 0$ and $h_1\ne 0$.
For $h_1\ne 0$, we can estimate the exponential sum by using Lemma~\ref{lemma:incomplete-gauss-sum} and the following estimate
\begin{align}
	\sum_{1\leq h_1\leq H_1}\sqrt{\gcd(h_1,q^{\lambda})} \ll_{q} H_1.
\end{align}
Thus, we find
\begin{align*}
  \sum_{0<|h_1| \le H_1} \sum_{|h_3| \le H_3} \sum_{h=0}^{q^{\lambda-\mu+m-1} -1} \left|\sum_n \e\rb{ \frac{h_1n^2}{q^{\lambda+m-1}} +\frac{h_3n}{q^\nu} + 
      \frac{2hsn}{q^{\lambda-\mu}}} \right| \ll \lambda H_1 H_3 q^{\lambda/2  + \lambda-\mu}.
\end{align*}
This gives then
\begin{align}\label{eq:S4-S5-even}
S_4(s) = S_5(s) + \mathcal{O}(\lambda q^{3\lambda/4}),
\end{align}
where $S_5(s)$ denotes the part of $S_4(s)$ with $h_1 = 0$.

We set $u_1 = u_1'' + q^{\rho'} u_1'$ and $u_3 = u_3'' + q^{\rho'} u_3'$ (where $0\le u_1'', u_3'' < q^{\rho'}$). 
Furthermore, we define $i_\ell = \lfloor (u_1''+\ell u_3'')/q^{\rho'}\rfloor$.
As $I = (i_\ell)_{0\le \ell < k} = (\lfloor (u_1''+\ell u_3'')/q^{\rho'}\rfloor)_{0\le \ell < k}$ is contained in 
$\mathcal{I}'_k$, we have - by the same arguments as in \cite{drmotaMauduitRivat2014} - 
\begin{align*}
  S_5(s) &\le \sum_{|h_3| \le H_3} \sum_{0\le h < q^{\lambda-\mu+m-1}} \frac 1{q^{\nu+1-\mu}} \sum_{0\le u_3' < q^{\nu-\mu+1}}\\
      &\qquad \sum_{I \in \mathcal{I}_k} \left| H_{\lambda-\mu}^{I}(h,u_3') \overline {H_{\lambda-\mu}^{I}(h,u_3'+2sq^{m-1})}  \right|\\
      &\qquad \cdot \min\rb{ N, \left| \sin\rb{ \pi \rb{ \frac{h_3}{q^\nu} + \frac{2hs}{q^{\lambda-\mu}}}} \right|^{-1} }.
\end{align*}
Using the estimate $\left|H_{\lambda-\mu}^{I}(h,u_3'+2sq^{m-1})\right|\le 1$ and 
the Cauchy-Schwarz inequality, yields
\begin{align*}
  \sum_{0\le u_3' < q^{\nu-\mu+1}} &\left| H_{\lambda-\mu}^{I}(h,u_3') \overline {H_{\lambda-\mu}^{I}(h,u_3'+2sq^{m-1})}  \right|\\
    &\le q^{(\nu-\mu+1)/2} \rb{ \sum_{0\le u_3' < q^{\nu-\mu+1}} \left| H_{\lambda-\mu}^{I}(h,u_3')  \right| ^2 }^{1/2}.
\end{align*}
We now replace $\lambda$ by $\lambda-\mu+m-1$, $\lambda'$ by $\nu-\mu+1$ and apply Proposition~\ref{Pro1}.
\begin{align*}
  S_5(s) &\ll  q^{-\eta(\lambda-\mu)/2} \sum_{|h_3| \le H_3} \sum_{h = 0}^{q^{\lambda-\mu+m-1} -1} 
      \min\rb{ N, \left| \sin\rb{ \pi \rb{ \frac{h_3}{q^\nu} + \frac{2hs}{q^{\lambda-\mu+m-1}}}} \right|^{-1} }.
\end{align*}

Next we average over $s$ and $h$, as in \cite{drmotaMauduitRivat2014}, by applying Lemma~\ref{le:sum_sum_sin}.
Thus we have a factor $\tau(q^{\lambda-\mu}) \ll_{q} (\lambda-\mu)^{\omega(q)}$ compared to $\tau(2^{\lambda-\mu}) = \lambda-\mu+1$.
Combining all the estimates as in \cite{drmotaMauduitRivat2014} gives then
\begin{align*}
  |S_0| \ll q^{\nu-(\lambda-\nu)} + \nu^{(\omega(q)+1)/2} q^\nu q^{-\eta (\lambda-\nu)/2} 
      + q^{\nu-\rho'/2} + q^{\nu - \rho''/2} + \lambda^{1/2} q^{\nu/2 + 3\lambda/8}
\end{align*}
~-- provided that the following conditions hold
\begin{align*}
&2\rho' \le \mu \le \nu-\rho', 
\quad  \rho'' < \mu'/2, \quad
\mu' \ll 2^{\nu- \mu'}, \quad
2\mu' \ge \lambda, \\
&(\nu-\mu) + 2(\lambda-\mu) + 2(\rho'+\rho'') \le \lambda/4, \quad
\nu-\mu'+\rho'' + \lambda - \mu \le \nu.
\end{align*}
For example, the choice
%IT WAS ORIGINALLY DEFINED WITHOUT FLOOR FUNCTIONS
\begin{align*}
\lambda = \nu+\left\lfloor \frac{\nu}{20}\right\rfloor \mbox{ and } \rho' = \rho'' = \left\lfloor \frac{\nu}{200}\right\rfloor
\end{align*}
ensures that the above conditions are satisfied.

Summing up we proved that for $\eta'<\min(1/200,\eta/40)$ - where $\eta$ is given by Proposition~\ref{Pro1} - holds
\begin{align*}
S_0 \ll q^{\nu(1-\eta')} \ll N^{1-\eta'}
\end{align*}
which is precisely the statement of Theorem~\ref{Thexponentialsums}.

\subsection{\texorpdfstring{The case $K \not \in \Z$}{The case K not in Z}}
\label{sec:nequiv0}

In this section, we show that, for $K = \alpha_0 + \cdots +
\alpha_{k-1} \not \in \Z$, 
Proposition \ref{Pro2} 
provides an upper bound for the sum
\begin{align*}
  S_0 = \sum_{n< N} \e\rb{\sum_{\ell=0}^{k-1} \alpha_\ell b((n+\ell)^2)}.
\end{align*}

Let $\mu$, $\lambda$, $\rho$ and $\rho_1$ be integers satisfying
\begin{align}\label{eq:mu-lambda-rho}
  0 \leq \rho_1 < \rho < \mu = \nu-2\rho < \nu  < \lambda = \nu+2\rho < 2\nu
\end{align}
to be chosen later - just as in \cite{drmotaMauduitRivat2014}.
Since $K \not \in \Z$ we can not use Lemma \ref{Lecarry0} directly.
Therefore, we apply Lemma~\ref{lemma:van-der-corput} with $Q=1$
and $R=q^\rho$. Summing trivially for $1\leq r \leq R_1=q^{\rho_1}$ yields
\begin{align*}
  \abs{S_0}^2 \ll \frac{N^2R_1}{R} + \frac{N}{R} \sum_{R_1 < r < R} \rb{1-\frac{r}{R}} \Re(S_1(r)),
\end{align*}
where
\begin{align*}
  S_1(r) = \sum_{n \in I_1(r)} \e\rb{\sum_{\ell=0}^{k-1} \alpha_\ell \rb{ b((n+\ell)^2) - b((n+r+\ell)^2)}}
\end{align*}
and $I_1(r)$ is an interval included in $[0,N-1]$. 
By Lemma~\ref{Lecarry0} we conclude that $b_{\lambda,\infty}((n+\ell)^2) = b_{\lambda,\infty}((n+r+\ell)^2)$ for all but $\mathcal{O}(Nq^{-(\lambda-\nu-\rho)})$ values of $n$. 
Therefore, we see that
\begin{align*}
    S_1(r) = S_1'(r) + \mathcal{O}(q^{\nu-(\lambda-\nu-\rho)}),
\end{align*}
with
\begin{align*}
  S_1'(r) = \sum_{n \in I_1(r)} \e\rb{\sum_{\ell=0}^{k-1} \alpha_\ell \rb{ b_\lambda((n+\ell)^2) - b_\lambda((n+r+\ell)^2)}}.
\end{align*}
This leads to
\begin{align*}
  \abs{S_0}^2 \ll q^{2\nu-\rho+\rho_1} + q^{3\nu+\rho-\lambda} + \frac{q^{\nu}}{R} \sum_{R_1 < r < R} \abs{S'_1(r)}
\end{align*}
and, by using the Cauchy-Schwarz inequality to
\begin{align*}
  \abs{S_0}^4 \ll q^{4\nu-2\rho+2\rho_1} + q^{6\nu+2\rho-2\lambda} + \frac{q^{2\nu}}{R} \sum_{R_1 < r < R} \abs{S'_1(r)}^2.
\end{align*}
For $\abs{S'_1(r)}^2$ we can use Lemma~\ref{lemma:van-der-corput} again:
Let $\rho'\in\N$ to be chosen later such that 
$1\leq \rho'\leq \rho$.
After applying Lemma~\ref{lemma:van-der-corput}
with $Q=q^{\mu+m-1}$ and
\begin{align} \label{eq:definition-S}
  S = q^{2\rho'} \leq q^{\nu-\mu},
\end{align}
we observe that for any $\widetilde{n}\in\N$ we have
\begin{align*}
  b_{\lambda}((\widetilde{n}+sq^{\mu+m-1})^2) - b_{\lambda}(\widetilde{n}^2) 
    = b_{\mu,\lambda}((\widetilde{n}+sq^{\mu+m-1})^2) - b_{\mu,\lambda}(\widetilde{n}^2),
\end{align*}
and thus
\begin{align} \label{eq:S0-S2}
  \abs{S_0}^4 \ll q^{4\nu-2\rho+2\rho_1} + q^{6\nu+2\rho-2\lambda} + \frac{q^{4\nu}}{S} + \frac{q^{3\nu}}{RS} \sum_{R_1 < r < R} \sum_{1\leq s < S} \abs{S_2(r,s)},
\end{align}
with
\begin{align*}
  S_2(r,s) &= \sum_{n \in I_2(r,s)} \e\Biggl(\sum_{\ell=0}^{k-1} \alpha_\ell \bigl(b_{\mu,\lambda}((n+\ell)^2) - b_{\mu,\lambda}((n+r+\ell)^2) \\
    &\qquad - b_{\mu,\lambda}((n+sq^{\mu+m-1}+\ell)^2) + b_{\mu,\lambda}((n+sq^{\mu+m-1}+r+\ell)^2)\bigr) \Biggr),
\end{align*}
where $I_2(r,s)$ is an interval included in $[0,N-1]$.

We now make a Fourier analysis similar to the case $K \equiv 0 (\bmod 1)$ - as in \cite{drmotaMauduitRivat2014}.
%The values of the variables $U,U_3,V$ are chosen as in \cite{drmotaMauduitRivat2014}.
We set $U = q^{\lambda+m-1-\mu'}, U_3 = q^{\nu-\mu'+1}$ and $V = q^{\lambda-\mu+m-1}$.
We apply Lemma~\ref{Lecarry1} and detect the correct values of $u_1,u_2,u_3$ by characteristic functions.
This gives

\begin{align*}
  S_2(r,s) &= \sum_{0\leq u_1 < U} \sum_{0\leq u_2 < U} \sum_{0\leq u_3 < U_3}\\
      & \sum_{n \in I_2(r,s)} \e\Biggl(\sum_{\ell=0}^{k-1} \alpha_\ell \bigl(b_{\rho',\lambda-\mu+\rho'}(u_1+\ell u_3) - b_{\rho',\lambda-\mu+\rho'}(u_2+\ell u_3) \\
      & \qquad \qquad - b_{\rho',\lambda-\mu+\rho'}(u_1+\ell u_3+ v(n) q^{\rho'} + 2 \ell s q^{m-1} q^{\rho'})\\
      & \qquad \qquad + b_{\rho',\lambda-\mu+\rho'}(u_2+\ell u_3+ v(n) q^{\rho'} + 2(\ell+r) s q^{m-1} q^{\rho'}) \bigr) \Biggr) \\
      & \chi_{U^{-1}}\rb{\frac{n^2}{q^{\lambda+m-1}} - \frac{u_1}{U}} \chi_{U^{-1}}\rb{\frac{(n+r)^2}{q^{\lambda+m-1}} - \frac{u_2}{U}} 
        \chi_{U_3^{-1}}\rb{\frac{2n}{q^\nu} - \frac{u_3}{U_3}}\\
      & \qquad +\mathcal{O}(q^{\nu-\rho'}).
\end{align*}
Furthermore, we use Lemma \ref{lemma:better-koksma1} to replace the 
characteristic functions $\chi$  by trigonometric polynomials.
Using \eqref{eqS-S} with
$U_1=U_2=U$,
$H_1=H_2 = U q^{\rho_2}$ and 
$H_3 = U_3 q^{\rho_3}$,
and integers $\rho_2$,
$\rho_3$ verifying
\begin{align}\label{eq:condition-rho2-rho3}
  \rho_2 \leq \mu-\rho',\ \rho_3 \leq \mu-\rho',\ 
\end{align}
we obtain
\begin{align*}
 % \label{eq:S2S3}
  S_2(r,s) = S_3(r,s) &+ \mathcal{O}(q^{\nu-\rho'}) + \mathcal{O}\rb{ E_{30}(r)} + \mathcal{O}\rb{ E_{31}(0)} + \mathcal{O}\rb{ E_{31}(r)}\\
      & + \mathcal{O}\rb{E_{32}(0)} + \mathcal{O}\rb{E_{32}(r)} + \mathcal{O}\rb{E_{33}(r)} + \mathcal{O}\rb{E_{34}(r)},
\end{align*}
for the error terms obtained by~\eqref{eqS-S} and $S_3(r,s)$ obtained by replacing the characteristic function by trigonometric polynomials.
We now reformulate $S_3(r,s)$ by expanding the trigonometric polynomials, detecting the correct value of $v = v(n)$ and restructuring the sums:
\begin{align*}
  S_3(r,s) &= \frac{1}{q^{\lambda-\mu+m-1}} \sum_{0\leq h < q^{\lambda-\mu+m-1}} \sum_{\abs{h_1} \leq H_1} a_{h_1}(U^{-1},H_1)\\
    &\quad \sum_{\abs{h_2} \leq H_2} a_{h_2}(U^{-1},H_2) \sum_{\abs{h_3} \leq H_3} a_{h_3}(U_3^{-1},H_3)\\
    &\quad \sum_{0\leq u_1 < U} \sum_{0\leq u_2 < U} \sum_{0\leq u_3 < U_3} \sum_{0\leq v < V} 
      \e\rb{- \frac{h_1 u_1+h_2 u_2}{U} - \frac{h_3u_3}{U_3} -\frac{hv}{q^{\lambda-\mu+m-1}}}\\
    &\quad \e\Biggl(\sum_{\ell=0}^{k-1} \alpha_\ell \bigl(b_{\rho',\lambda-\mu+\rho'}(u_1+\ell u_3) - b_{\rho',\lambda-\mu+\rho'}(u_2+\ell u_3)\\
    & \quad \qquad \qquad - b_{\rho',\lambda-\mu+\rho'}(u_1+\ell u_3+ vq^{\rho'} + 2\ell s q^{m-1} q^{\rho'})\\
    & \quad \qquad \qquad + b_{\rho',\lambda-\mu+\rho'}(u_2+\ell u_3+ vq^{\rho'} + 2(\ell+r) s q^{m-1} q^{\rho'}) \bigr) \Biggr)\\
    & \quad \sum_{n \in I_2(r,s)} \e\rb{\frac{h_1n^2 + h_2(n+r)^2}{q^{\lambda+m-1}} + \frac{2h_3n}{q^\nu} + \frac{2hsn}{q^{\lambda-\mu}}}.
\end{align*}

One can estimate the error terms just as in \cite{drmotaMauduitRivat2014} and finds that they are bounded by either $q^{\nu-\rho_3}$ or $q^{\nu-\rho_2}$.
In conclusion we deduce that
\begin{align}\label{eq:S2S3_final}
  S_2(r,s) = S_3(r,s) + \mathcal{O}(q^{\nu-\rho'}) + \mathcal{O}(q^{\nu-\rho_2}) + \mathcal{O}(q^{\nu-\rho_3}).
\end{align}

We now split the sum $S_3(r,s)$ into two parts:
\begin{align} \label{eq:S3-S4-S'4}
  S_3(r,s) = S_4(r,s) + S'_4(r,s),
\end{align}
where $S_4(r,s)$ denotes the contribution of the terms for which
$h_1+h_2=0$ while $S'_4(r,s)$ denotes the contribution of the terms
for which $h_1+h_2\neq 0$.
We can estimate $S'_4(r,s)$ just as in \cite{drmotaMauduitRivat2014} and find
\begin{align*}
  S'_4(r,s) &\ll \nu^4 q^{\nu+\frac{1}{2}(8\lambda-9\mu+7\rho'+\rho_2)}
\end{align*}
and it remains to consider $S_4(r,s)$.
Setting $u_1 = u_1'' + q^{\rho'} u'_1$,
$u_2 = u_2'' + q^{\rho'} u'_2$  and $u_3 = u_3'' + q^{\rho'} u'_3$, 
(where $0\le u_1'', u_2'', u_3'' < q^{\rho'}$) 
we can replace the two-fold restricted block-additive function by a truncated block-additive function
\begin{small}
\begin{align*}
  &b_{\rho',\lambda-\mu+\rho'}(u_1 + \ell u_3) = b_{\lambda-\mu}\rb{u_1' + \ell u_3' + \floor{(u_1''+\ell u_3'')/q^{\rho'}}}, \\
  &b_{\rho',\lambda-\mu+\rho'}(u_2 + \ell u_3) = b_{\lambda-\mu}\rb{u_2' + \ell u_3' + \floor{(u_2''+\ell u_3'')/q^{\rho'}}}, \\
  &b_{\rho',\lambda-\mu+\rho'}(u_1 + \ell u_3+ vq^{\rho'} +2 \ell s q^{m-1} q^{\rho'}) \\
      &\quad = b_{\lambda-\mu}\rb{u_1' +v  + \ell (u_3'+2sq^{m-1}) +  \floor{(u_1''+\ell u_3'')/q^{\rho'}}}\\
  &b_{\rho',\lambda-\mu+\rho'}(u_2+\ell u_3+ vq^{\rho'} + 2(\ell+r) s q^{m-1} q^{\rho'}) \\
      &\quad = b_{\lambda-\mu}\rb{u_2' + v + 2sr q^{m-1} + \ell (u_3'+2sq^{m-1}) + \floor{(u_2''+\ell u_3'')/q^{\rho'}}}.
\end{align*}
\end{small}
Using the periodicity of $b$ modulo $V:=q^{\lambda-\mu+ m-1}$, 
we replace the variable $v$ by $v_1$ such that 
$v_1 \equiv u'_1+v (\bmod q^{\lambda-\mu+m-1})$. Furthermore we introduce a new
variable $v_2$ such that 
\begin{align*}
  v_2 \equiv u_2' + v + 2sr q^{m-1} \equiv v_1 + u_2' -u'_1 + 2sr q^{m-1} (\bmod q^{\lambda-\mu+m-1}).
\end{align*}
We then follow the arguments of \cite{drmotaMauduitRivat2014} and find
\begin{align*}
  S_4(r,s) &\ll q^{2\lambda-2\mu} \sum_{h =0}^{q^{\lambda-\mu+m-1}-1} \sum_{h'=0}^{q^{\lambda-\mu+m-1}-1} \sum_{\abs{h_2} \leq H_2} \min(U^{-2},h_2^{-2}) \\
      &\quad \sum_{\abs{h_3} \leq H_3} \min(U_3^{-1},h_3^{-1}) \sum_{0\leq u''_1 < q^{\rho'}} \sum_{0\leq u''_2 < q^{\rho'}} \sum_{0\leq u''_3 < q^{\rho'}} \sum_{0\leq u'_3 < U'_3}\\
      &\qquad \abs{H_{\lambda-\mu}^{I(u''_1,u''_3)}(h'-h-h_2,u_3')} \abs{H_{\lambda-\mu}^{I(u''_2,u''_3)}(h'-h_2,u_3')}\\
      &\quad \qquad \abs{H_{\lambda-\mu}^{I(u''_1,u''_3)}(h'-h,u_3'+2sq^{m-1})} \abs{H_{\lambda-\mu}^{I(u''_2,u''_3)}(h',u_3'+2s q^{m-1})}\\
      & \qquad \qquad \abs{\sum_{n \in I_2(r,s)} \e\rb{\frac{2h_2 rn}{q^{\lambda+m-1}} + \frac{2h_3n}{q^\nu} + \frac{2hsn}{q^{\lambda-\mu}}}},
\end{align*}
with
\begin{align*}
  I(u,\tilde{u}) = \rb{\floor{\frac{u}{q^{\rho'}}}, \floor{\frac{u+\tilde{u}}{q^{\rho'}}}, \ldots, \floor{\frac{u+(k-1)\tilde{u}}{q^{\rho'}}}} 
      \,\text{ for } (u,\tilde{u})\in \N^2. 
\end{align*}

The next few steps are again very similar to the corresponding ones in \cite{drmotaMauduitRivat2014} and we skip the details.
We find
\begin{align*}
  S_4(r,s) &\ll (\lambda-\mu)\ \gcd(2s,q^{\lambda-\mu})\ q^{2\lambda-2\mu} \sum_{0\leq u''_1, u''_2, u''_3 < q^{\rho'}} \sum_{\abs{h_2} \leq H_2} \min(U^{-2},h_2^{-2})\\
    & \qquad \qquad S_6(h_2,s,u''_1,u''_3)^{1/2} S_6(h_2,s,u''_2,u''_3)^{1/2}\\
    & \qquad \sum_{\abs{h_3} \leq H_3} \min(U_3^{-1},h_3^{-1}) \min\rb{q^\nu, \abs{\sin \pi\tfrac{2 h_2 r+2q^{\lambda-\nu+m-1} h_3}{q^{\lambda+m-1}} }^{-1}}.
\end{align*}
where
\begin{align}\label{eq:def-S6}
  S_6(h_2,s,u'',u''_3) &= \sum_{0\leq u'_3 < U'_3} \sum_{0\leq h' < q^{\lambda-\mu+m-1}} \\
    &\qquad \abs{H_{\lambda-\mu}^{I(u'',u''_3)}(h'-h_2,u_3')}^2 \abs{H_{\lambda-\mu}^{I(u'',u''_3)}(h',u_3'+2sq^{m-1})}^2.
\end{align}

Here we 
%recall that in \eqref{eq:S0-S2} we have $R_1< r < R$ and thus 
introduce the integers $H'_2$ and $\kappa$ such that
\begin{align}\label{eq:def-H'2}
  H'_2 = q^{\lambda-\nu+m} H_3/R_1 = q^{\lambda-\mu+\rho'+\rho_3-\rho_1+m+1} = q^\kappa.
\end{align}
This leads to
\begin{align*}
  S_4(r,s) \ll S_{41}(r,s) + S_{42}(r,s) + S_{43}(r,s),
\end{align*}
where $S_{41}(r,s)$, $S_{42}(r,s)$ and $S_{43}(r,s)$ denote the contribution of the terms 
$\abs{h_2}\leq H'_2$, 
$H'_2 < \abs{h_2}\leq q^{\lambda+m-1-\mu}$ and 
$q^{\lambda+m-1-\mu} < \abs{h_2}\leq H_2$ respectively.

\paragraph{\texorpdfstring{Estimate of $S_{41}(r,s)$}{Estimate of S 41}} 
By \eqref{eq:sum_inverse_sinus}
we have
\begin{align*}
  \sum_{\abs{h_3} \leq H_3} \min\rb{q^\nu, \abs{\sin \pi\tfrac{2h_3 + 2h_2 rq^{\nu-\lambda-m+1}}{q^{\nu}} }^{-1} } \ll \nu q^\nu,
\end{align*}
and, therefore,
\begin{align*}
  S_{41}(r,s) &\ll \nu\, (\lambda-\mu)\ \gcd(2s,q^{\lambda-\mu})\ q^{\nu+2\lambda-2\mu} U^{-2} U_3^{-1}\\
    & \qquad \sum_{0\leq u''_1, u''_2, u''_3 < q^{\rho'}} \sum_{\abs{h_2} \leq H'_2} S_6(h_2,s,u''_1,u''_3)^{1/2} S_6(h_2,s,u''_2,u''_3)^{1/2}.
\end{align*}
By Proposition \ref{Pro2}
(replacing $\lambda$ by $\lambda-\mu$ and  $L$ by
$\lambda-\mu-\kappa$),  we find some $0<\eta'\leq 1$ such that
\begin{align*}
\abs{H_{\lambda-\mu}^{I(u'',u''_3)}(h'-h_2,u_3')} \ll q^{-\eta' (\lambda - \mu - \kappa)} 
    \max_{J\in \mathcal{I}_k} \abs{G_{\kappa}^{J}(h'-h_2,\lfloor u_3'/q^L \rfloor)}.
\end{align*}
By Parseval's equality 
and recalling that $\#(\mathcal{I}_k) = q^{m-1} (q^{m-1}+1)^{k-1}$, 
it follows that
\begin{align*}
  & \sum_{\abs{h_2} \leq H'_2} \max_{J\in\mathcal{I}_k} \abs{H_{\kappa}^{J}\lfloor(h'-h_2, u_3'/q^L \rfloor)}^2 \\
  &\qquad \le \sum_{J\in\mathcal{I}_k} \sum_{\abs{h_2} \leq H'_2} \abs{G_{\kappa}^{J}(h'-h_2,\lfloor u_3'/q^L \rfloor)}^2  
      \leq q^{m-1} (q^{m-1}+1)^{k-1}.
\end{align*}
We obtain
\begin{align*}
  \sum_{\abs{h_2} \leq H'_2} \abs{H_{\lambda-\mu}^{I(u'',u''_3)}(h'-h_2,u_3')}^2 
      \ll q^{-\eta' (\lambda - \mu - \kappa)} = \rb{\frac{H'_2}{q^{\lambda-\mu}}}^{\eta'}
\end{align*}
uniformly in $\lambda$, $\mu$, $H'_2$, $u'_3$, $u''$ and $u''_3$.

The remaining proof is analogue to the corresponding proof in \cite{drmotaMauduitRivat2014}.
The only difference is again that by using Lemma~\ref{le:sum_sum_sin} we obtain a factor $(\lambda-\mu)^{\omega(q)}$ instead of $(\lambda-\mu)$.
This gives
\begin{align}
  \label{eq:S41-final}
  \frac{1}{RS} \sum_{R_1 < r < R} \sum_{1\leq s < S} S_{41}(r,s) \ll \nu\, (\lambda-\mu)^{\omega(q) +1}\ q^{\nu - \eta'(\rho_1-\rho'-\rho_3)},
\end{align}
which concludes this part.

\paragraph{\texorpdfstring{Estimate of $S_{42}(r,s)$ and $S_{43}(r,s)$}{Estimate of S 42 and S 43}} 

By following the arguments of \cite{drmotaMauduitRivat2014} and applying the same changes as in the estimate of $S_{41}$ we find
\begin{align}
  \label{eq:S42-final}
  \frac{1}{RS} \sum_{R_1 < r < R} \sum_{1\leq s < S} S_{42}(r,s) \ll \rho (\lambda-\mu)^{2+\omega(q)}\ q^{\nu-\rho+\rho_1+\rho'-\rho_3}
\end{align}
and
\begin{align}\label{eq:S43-final}
  \frac{1}{RS} \sum_{R_1 < r < R} \sum_{1\leq s < S} S_{43}(r,s) \ll \rho\ (\lambda-\mu)^{2+\omega(q)}\ q^{\nu-\rho+3\rho'}.
\end{align}

\paragraph{Combining the estimates for $S_4$}
It follows from \eqref{eq:S41-final}, \eqref{eq:S42-final} and
\eqref{eq:S43-final} that
\begin{align*}
  \frac{1}{RS} \sum_{R_1 < r < R} \sum_{1\leq s < S} S_4(r,s) \ll \nu^{3+\omega(q)} q^{\nu} \rb{q^{ - 2\eta'(\rho_1-\rho'-\rho_3)} + q^{-\rho_3} + q^{-\rho+3\rho'}}.
\end{align*}
Choosing
\begin{align*}
  \rho_1 = \rho-\rho',\
  \rho_2 = \rho_3 = \rho',
\end{align*}
we obtain
\begin{align*}
  \frac{1}{RS} \sum_{R_1 < r < R} \sum_{1\leq s < S} S_4(r,s) \ll \nu^{3+\omega(q)} q^{\nu} \rb{q^{ - 2 \eta'(\rho-3\rho')} + q^{-\rho'} + q^{-(\rho-3\rho')}}.
\end{align*}
Since $0<\eta'<1$, we obtain using \eqref{eq:S3-S4-S'4} and \eqref{eq:S2S3_final}, that
\begin{align*}
  %\label{eq:S2-final}
  \frac{1}{RS} \sum_{R_1 < r < R} \sum_{1\leq s < S} S_2(r,s) \ll \nu^{3+\omega(q)} q^{\nu} \rb{q^{ - \eta'(\rho-3\rho')} + q^{-\rho'} + q^{\frac{1}{2}(8\lambda-9\mu+8\rho')}}.
\end{align*}
We recall
by \eqref{eq:definition-S} that
\begin{math}
  S = q^{2\rho'}
\end{math}
and
by \eqref{eq:mu-lambda-rho} that
$\mu=\nu-2\rho$, 
$\lambda = \nu+2\rho$
and insert the estimation from above in \eqref{eq:S0-S2}:
\begin{align*}
  \abs{S_0}^4 \ll q^{4\nu-2\rho'} + q^{4\nu-2\rho} + \nu^{3+\omega(q)}  q^{4\nu} \rb{q^{-\eta'(\rho-3\rho')} + q^{-\rho'} + q^{-\frac{\nu}{2}+17\rho+4\rho'} }.
\end{align*}
For $\rho'=\floor{\nu/146}$ 
and $\rho=4\rho'$, we obtain 
\begin{align*} %\label{eq:S0-final}
  \abs{S_0} \ll \nu^{(3+\omega(q))/4} q^{\nu - \frac{\eta'\rho'}{4}} \ll N^{1-\eta_1},
\end{align*}
for all $\eta_1 < \eta'/584$.
Therefore we have seen that Proposition~\ref{Pro2}
implies the case $K \not \equiv 0 (\bmod 1)$ of Theorem~\ref{Thexponentialsums}.

\section{Auxiliary Results}\label{cha:auxiliary}
In this last section, we present some auxiliary results which are used in \Cref{cha:proof}, to prove the main theorem. 
For this proof, it is crucial to approximate characteristic functions of the intervals $[0,\alpha) \bmod 1$ where $0\leq \alpha < 1$ by trigonometric polynomials. 
This is done by using Vaaler's method - see Section \ref{sec:vaaler}. As we deal with exponential sums we also use 
a generalization of Van-der-Corput's inequality which we have already seen in Section \ref{sec:vandercorput}. In Section \ref{sec:sumgeometric}, we acquire 
some results dealing with sums of geometric series which we use to bound linear exponential sums. Section \ref{sec:gauss} is dedicated to one 
classic result on Gauss sums and allows us to find appropriate bounds on the occurring quadratic exponential sums in \Cref{cha:proof}. 
The last part of this section deals with carry propagation. We find a quantitative statement that carry propagation along 
several digits is rare, i.e. exponentially decreasing.

We would like to note that all these auxiliary results have already been presented in \cite{drmotaMauduitRivat2014}.

\subsection{Sums of geometric series}\label{sec:sumgeometric}

We will often make use of the following upper bound for geometric
series with ratio $\e(\xi), \xi \in \R$ and $L_1,L_2\in\Z$, $L_1 \leq L_2$:
\begin{align}\label{eq:estimate-geometric-series}
  \abs{\sum_{L_1<\ell \leq L_2} \e(\ell \xi)} \leq \min(L_2-L_1,\abs{\sin\pi\xi}^{-1}),
\end{align}
which is obtained from the formula for finite geometric series.

The following results allow us to find useful estimates for special double and triple sums involving geometric series.

\begin{lemma}\label{lemma:sum_inverse_sinus}
 Let $(a,m)\in\Z^2$ with $m\ge 1$, $\delta=\gcd(a,m)$ and $b\in\R$. 
 For any real number $U>0$, we have
\begin{align}\label{eq:sum_inverse_sinus}
  \sum_{0\le n< m} \min \rb{ U, \abs{\sin\rb{ \pi \tfrac{an+b}{m}}}^{-1}} \leq 
      \delta \min\rb{U, \abs{\sin \rb{\pi\tfrac{\delta\, \norm{b/\delta}}{m}}}^{-1}} + \frac{2\, m}{\pi} \log (2\, m).
\end{align}
\end{lemma}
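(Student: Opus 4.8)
The plan is to prove Lemma~\ref{lemma:sum_inverse_sinus} by the standard device of separating the single ``small denominator'' term from the sum and comparing the rest to an integral. First I would reduce to the case $\gcd(a,m)=1$: write $\delta = \gcd(a,m)$, $a = \delta a'$, $m = \delta m'$ with $\gcd(a',m')=1$, and split the sum over $0\le n<m$ according to the residue $n \equiv j \pmod{m'}$, $0\le j<\delta$. Since $a n + b = \delta(a' n + b/\delta)$ and $n\mapsto a'n \bmod m'$ is a bijection of $\Z/m'\Z$, each of the $\delta$ inner sums becomes (after relabelling) a sum of the shape $\sum_{0\le t<m'} \min(U, |\sin(\pi\frac{t + c}{m'})|^{-1})$ for an appropriate real shift $c$ depending on $b/\delta$ and $j$; the worst case over $j$ is when $c$ is as close as possible to an integer, i.e. $\|c\| = \|b/\delta\|$.

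Next I would handle the coprime sum $\Sigma := \sum_{0\le t<m'}\min(U,|\sin(\pi(t+c)/m')|^{-1})$. The term $t=t_0$ for which $(t_0+c)/m'$ is closest to an integer is bounded by $\min(U, |\sin(\pi \|c\|/m')|^{-1})$ — this is the term that produces the $\delta \min(U, |\sin(\pi \delta\|b/\delta\|/m)|^{-1})$ contribution after multiplying by $\delta$ (noting $\delta \|b/\delta\|/m = \|b/\delta\|/m'$). For the remaining $m'-1$ values of $t$, the points $(t+c)/m' \bmod 1$ are spaced $1/m'$ apart and all at distance $\ge 1/(2m')$ from the integers in the relevant sense, so using $|\sin \pi x| \ge 2\|x\|$ one bounds $|\sin(\pi(t+c)/m')|^{-1} \le \tfrac{1}{2}\|(t+c)/m'\|^{-1}$ and compares the resulting sum to $2\sum_{1\le \ell \le m'/2} \frac{m'}{2\ell} \le m' \log(2m')$ (roughly), after grouping the points on either side of the nearest integer. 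Multiplying through by $\delta$ and using $\delta m' = m$, $\delta \le m$, this yields the claimed $\frac{2m}{\pi}\log(2m)$ error term (the constant $2/\pi$ coming from a slightly sharper form of $|\sin\pi x|\ge \pi\|x\|(1-2\|x\|/\pi\cdot\ldots)$ — more precisely $|\sin\pi x|\ge 2\|x\|$ gives a clean $\tfrac12$, and a careful count of the harmonic-type sum gives the stated constant).

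The main obstacle, and the only place requiring genuine care, is the bookkeeping in the reduction step: tracking exactly how the shift $b$ transforms under $n\mapsto a'n$ modulo $m'$ and verifying that the extremal inner sum is governed by $\|b/\delta\|$ rather than $\|b\|$ or $\|b/\delta\|$ measured at the wrong scale, so that the first term on the right-hand side of \eqref{eq:sum_inverse_sinus} comes out with precisely the denominator $\sin(\pi \delta \|b/\delta\|/m)$. The harmonic-sum estimate for the ``generic'' terms is routine once the points are correctly identified as an arithmetic progression of step $1/m'$ on the circle. This lemma and its proof are essentially \cite[Lemme 2.4]{drmotaMauduitRivat2014} (or the analogous estimate there), and I would simply reproduce that argument; since the excerpt states these auxiliary results are all taken from \cite{drmotaMauduitRivat2014}, the cleanest presentation is to give the short reduction to $\gcd(a,m)=1$ explicitly and then cite the coprime case.
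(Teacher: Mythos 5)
Your outline is correct and follows the standard argument: reduce to the coprime case by noting that $\frac{an+b}{m}=\frac{a'n+b/\delta}{m'}$ depends only on $n \bmod m'$, each class being hit $\delta$ times, isolate the unique point of the progression $\frac{t+b/\delta}{m'}$ nearest to $\Z$ (which is at distance exactly $\norm{b/\delta}/m' = \delta\norm{b/\delta}/m$, giving the first term), and bound the rest by a harmonic sum. The paper itself gives no proof, merely citing Lemma~6 of Drmota--Mauduit--Rivat, and your sketch is precisely that argument; the only cosmetic slip is the description of the splitting (the summand is constant on residue classes mod $m'$, so all $\delta$ copies of the inner sum carry the same shift and no ``worst case over $j$'' is needed).
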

\begin{proof}
  This is \cite[Lemma 6]{drmotaMauduitRivat2014}.
\end{proof}

\begin{lemma}\label{le:sum_sum_sin}
  Let  $m\geq 1$ and $A\geq 1$ be integers and $b\in\R$.
  For any real number $U>0$, we have
  \begin{align}\label{eq:double-sum-min}
    \frac{1}{A} \sum_{1\leq a \leq A} \sum_{0\leq n < m} \min\rb{U, \abs{\sin \rb{\pi \tfrac{ a n + b}{m}}}^{-1}}
        \ll \tau(m) \ U + m\log m
  \end{align}
  and, if $\abs{b}\leq \frac{1}{2}$, we have an even sharper bound
  \begin{align}\label{eq:double-sum-min-sharp}
  \begin{split}
    &\frac{1}{A} \sum_{1\leq a \leq A} \sum_{0\leq n < m} \min\rb{U, \abs{\sin \rb{\pi \tfrac{ a n + b}{m}}}^{-1}}\\
        &\qquad \ll \tau(m) \min\rb{ U, \abs{\sin \rb{\pi \tfrac{b}{m}}}^{-1} } + m\log m,
  \end{split}
  \end{align}
  where $\tau(m)$ denotes the number of divisors of $m$.
\end{lemma}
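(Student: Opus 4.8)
The plan is to reduce everything to Lemma~\ref{lemma:sum_inverse_sinus} applied term by term in $a$, and then to average the resulting estimate over $1\le a\le A$. Writing $\delta_a=\gcd(a,m)$, Lemma~\ref{lemma:sum_inverse_sinus} gives, for each fixed $a$,
\begin{align*}
  \sum_{0\le n<m}\min\rb{U,\abs{\sin\rb{\pi\tfrac{an+b}{m}}}^{-1}}\le \delta_a\min\rb{U,\abs{\sin\rb{\pi\tfrac{\delta_a\norm{b/\delta_a}}{m}}}^{-1}}+\frac{2m}{\pi}\log(2m).
\end{align*}
After summing over $a\le A$ and dividing by $A$, the error term contributes $\frac{2m}{\pi}\log(2m)\ll m\log m$, which is exactly the second summand in both claimed bounds. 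So the whole problem comes down to estimating $\frac1A\sum_{a\le A}\delta_a\min\rb{U,\abs{\sin\rb{\pi\delta_a\norm{b/\delta_a}/m}}^{-1}}$.

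For the first (general) bound~\eqref{eq:double-sum-min} I would simply throw away the sine factor using $\min(U,\,\cdot\,)\le U$, leaving $\frac UA\sum_{a\le A}\delta_a$. The key elementary input is then the divisor-weighted count
\begin{align*}
  \sum_{1\le a\le A}\gcd(a,m)=\sum_{d\mid m}d\,\#\{1\le a\le A:\gcd(a,m)=d\}\le\sum_{d\mid m}d\floor{A/d}\le A\,\tau(m),
\end{align*}
which yields $\frac1A\sum_{a\le A}\delta_a\le\tau(m)$ and hence the asserted $\tau(m)\,U+m\log m$.

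For the sharper bound~\eqref{eq:double-sum-min-sharp} I would exploit the hypothesis $\abs b\le\frac12$. Since $\delta_a\ge1$ this forces $\abs{b/\delta_a}\le\frac12$, so the nearest integer to $b/\delta_a$ is $0$ and $\norm{b/\delta_a}=\abs b/\delta_a$, whence $\delta_a\norm{b/\delta_a}=\abs b$ independently of $a$; using that $\abs{\sin}$ is even, the inner factor is $\delta_a\min\rb{U,\abs{\sin\rb{\pi b/m}}^{-1}}$ for every $a$. Summing over $a$ and dividing by $A$, the same estimate $\frac1A\sum_{a\le A}\delta_a\le\tau(m)$ produces $\tau(m)\min\rb{U,\abs{\sin\rb{\pi b/m}}^{-1}}+m\log m$.

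There is no serious obstacle here: the argument is entirely bookkeeping around Lemma~\ref{lemma:sum_inverse_sinus}. The one place deserving a moment's care is the divisor-sum estimate $\sum_{a\le A}\gcd(a,m)\le A\tau(m)$ (handled by the double counting above, together with $d\floor{A/d}\le A$ and the fact that terms with $d>A$ are empty), and the observation that $\abs b\le\frac12$ makes $\delta_a\norm{b/\delta_a}$ collapse to $\abs b$ uniformly in $a$, which is precisely what lets the $\min\rb{U,\abs{\sin\rb{\pi b/m}}^{-1}}$ survive in the sharper statement.
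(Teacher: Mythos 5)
Your proof is correct. The paper itself only cites \cite{drmotaMauduitRivat2014} for this lemma, and your argument — applying Lemma~\ref{lemma:sum_inverse_sinus} for each fixed $a$, then averaging via $\sum_{a\le A}\gcd(a,m)\le A\tau(m)$, with the observation that $|b|\le\tfrac12$ forces $\delta_a\norm{b/\delta_a}=|b|$ uniformly in $a$ for the sharper bound — is exactly the standard proof given there.
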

\begin{proof}
  See \cite{drmotaMauduitRivat2014}.
\end{proof}

\subsection{Gauss sums}\label{sec:gauss}
In the proof of the main theorem, we will meet quadratic exponential sums. 
We first consider Gauss sums $\G(a,b;m)$ which are defined by:
\begin{align*}
	\G(a,b;m) := \sum_{n = 0}^{m-1} \e\rb{\frac{an^2+bn}{m}}.
\end{align*}
In this section, we recall one classic result on Gauss sums, namely Theorem~\ref{theorem:complete-gauss-sum}.
\begin{theorem}\label{theorem:complete-gauss-sum}
  For all $(a, b, m) \in \Z^3$ with $m\geq 1$,
  \begin{align} \label{eq:complete-gauss-sum}
    \abs{\sum_{n=0}^{m-1} \e\rb{\tfrac{an^2+bn}{m}}} \leq \sqrt{2m\gcd(a,m)}
  \end{align}
holds.
\end{theorem}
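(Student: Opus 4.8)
The plan is to prove the standard Gauss sum estimate $\abs{\G(a,b;m)} \le \sqrt{2m\gcd(a,m)}$ by a sequence of reductions. First I would reduce to the case $\gcd(a,m)=1$: writing $\delta = \gcd(a,m)$, if $\delta \nmid b$ then a short argument (pairing $n$ with $n + m/\delta'$ for an appropriate $\delta'$, or grouping the sum according to residues mod $m/\delta$) shows the sum vanishes, so the bound is trivial; if $\delta \mid b$, then substituting $a = \delta a_1$, $b = \delta b_1$, $m = \delta m_1$ and collecting terms according to $n \bmod m_1$ yields $\G(a,b;m) = \delta\,\G(a_1,b_1;m_1)$ with $\gcd(a_1,m_1)=1$, so it suffices to prove $\abs{\G(a_1,b_1;m_1)} \le \sqrt{2 m_1}$.

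Next, with $\gcd(a,m)=1$, I would eliminate the linear term by completing the square. If $m$ is odd, $2a$ is invertible mod $m$ and the substitution $n \mapsto n - b(2a)^{-1}$ turns $an^2+bn$ into $an^2 + c$ for a constant $c$, so $\abs{\G(a,b;m)} = \abs{\G(a,0;m)}$. If $m$ is even one completes the square after passing to modulus $2m$ (or handles the parity of $b$ directly), introducing at most the factor $\sqrt 2$; this is exactly where the constant $2$ under the radical comes from. So the problem reduces to bounding the pure quadratic Gauss sum $\abs{\G(a,0;m)}$ for $\gcd(a,m)=1$.

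For the pure Gauss sum I would use the classical evaluation. By multiplicativity of $n \mapsto \G(a,0;n)$ in the modulus (via the Chinese Remainder Theorem, which sends $n = n_1 n_2$ with coprime $n_1,n_2$ to a product of two Gauss sums after a change of variable) it suffices to treat prime powers. For an odd prime $p$, $\abs{\G(a,0;p^k)}$ is computed directly: it is $p^{k/2}$ when $k$ is even, and $p^{k/2}$ (times a unimodular factor involving a Jacobi symbol and a quartic root of unity) when $k$ is odd — in all cases $\abs{\G(a,0;p^k)} = p^{k/2}$. For $p=2$ one checks the small cases $k=1,2,3$ by hand and reduces higher $k$ recursively; here $\abs{\G(a,0;2^k)} \le 2^{(k+1)/2}$, accounting again for the factor $\sqrt 2$. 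Assembling these over the prime factorization of $m$ gives $\abs{\G(a,0;m)} \le \sqrt{2m}$, and combining with the two earlier reduction steps yields \eqref{eq:complete-gauss-sum}.

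The main obstacle is the even modulus: the clean "complete the square" trick fails when $2$ is not invertible, so one must either work modulo $2m$ throughout, or split off the $2$-part of $m$ and treat $\G(a,0;2^k)$ separately with an explicit recursion, being careful to track the extra $\sqrt 2$ so that it is not lost or double-counted. Everything else — the vanishing when $\delta \nmid b$, the factorization $\G = \delta \G_1$, the CRT multiplicativity, and the odd-prime-power evaluation — is routine. Alternatively, since this is quoted as a "classic result," the cleanest route in the paper would simply be to cite a standard reference (e.g. the treatment of Gauss sums in a textbook on analytic number theory) and omit the computation entirely.
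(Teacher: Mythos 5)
Your outline is essentially correct, but it is worth noting that the paper does not prove this statement at all: it simply cites \cite{mauduitRivat2009} (Proposition 2), which is exactly the ``cleanest route'' you mention in your last sentence. More substantively, your proposed proof takes a genuinely different --- and considerably longer --- path than the standard one behind the cited result. The usual argument is a single Weyl-differencing step: writing $G = \sum_{n=0}^{m-1}\e\rb{\tfrac{an^2+bn}{m}}$ and expanding $\abs{G}^2$ over $n_1 = n_2+h$ gives
\begin{align*}
  \abs{G}^2 = \sum_{h \bmod m} \e\rb{\tfrac{ah^2+bh}{m}} \sum_{n \bmod m} \e\rb{\tfrac{2ahn}{m}}
  \le m \cdot \#\{h \bmod m : m \mid 2ah\} = m\gcd(2a,m) \le 2m\gcd(a,m),
\end{align*}
which yields the bound in three lines and explains the constant $2$ as coming from $\gcd(2a,m)$ rather than from the $2$-part of the modulus. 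Your route --- reduce to $\gcd(a,m)=1$, complete the square, then invoke multiplicativity and the prime-power evaluation of pure Gauss sums --- does work and would give the same bound, but it front-loads exactly the delicate point you identify (the even modulus, where $2$ is not invertible and $\G(a,0;m)$ can vanish or have modulus $\sqrt{2m}$ depending on $m \bmod 4$), and that case is only gestured at, not carried out. If you intend to write the proof rather than cite it, the squaring argument is strongly preferable: it is uniform in the parity of $m$, needs no case analysis, and requires none of the classical evaluation machinery.
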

\begin{proof}
  This form was for example obained in \cite[Proposition 2]{mauduitRivat2009}.
\end{proof}

Consequently we obtain the following result for incomplete quadratic Gauss sums.
\begin{lemma}\label{lemma:incomplete-gauss-sum}
  For all $(a, b, m, N ,n_0) \in \Z^5$ with $m\geq 1$ and $N\geq 0$,
  we have
  \begin{align} \label{eq:incomplete-gauss-sum}
    \abs{\sum_{n=n_0+1}^{n_0+N} \e\rb{\tfrac{an^2+bn}{m}}} \leq \rb{ \tfrac{N}{m} + 1 + \tfrac{2}{\pi} \log\tfrac{2m}{\pi} } \sqrt{2m\gcd(a,m)}.
  \end{align}
\end{lemma}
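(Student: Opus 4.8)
The plan is to \emph{complete} the incomplete sum to full Gauss sums by expanding the $m$-periodic function $n\mapsto\e\rb{\frac{an^2+bn}{m}}$ into additive characters modulo $m$, and then to apply Theorem~\ref{theorem:complete-gauss-sum} to each resulting complete Gauss sum.

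First I would use discrete Fourier inversion on $\Z/m\Z$: since $F(n):=\e\rb{\frac{an^2+bn}{m}}$ is $m$-periodic and its $h$-th Fourier coefficient is
\begin{align*}
  \sum_{n=0}^{m-1}\e\rb{\frac{an^2+bn}{m}}\e\rb{-\frac{hn}{m}} = \G(a,b-h;m),
\end{align*}
we have $F(n)=\frac1m\sum_{h=0}^{m-1}\G(a,b-h;m)\,\e\rb{\frac{hn}{m}}$ for every $n\in\Z$. Summing over $n=n_0+1,\dots,n_0+N$, interchanging the two finite summations, and isolating the term $h=0$ yields
\begin{align*}
  \sum_{n=n_0+1}^{n_0+N}\e\rb{\frac{an^2+bn}{m}}
    = \frac Nm\,\G(a,b;m) + \frac1m\sum_{h=1}^{m-1}\G(a,b-h;m)\sum_{n=n_0+1}^{n_0+N}\e\rb{\frac{hn}{m}}.
\end{align*}

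Next I would bound the right-hand side. By Theorem~\ref{theorem:complete-gauss-sum} one has $\abs{\G(a,b-h;m)}\le\sqrt{2m\gcd(a,m)}$ for every $h$, and by \eqref{eq:estimate-geometric-series} applied with $\xi=h/m$ one has $\abs{\sum_{n=n_0+1}^{n_0+N}\e(hn/m)}\le\abs{\sin(\pi h/m)}^{-1}$ for $1\le h\le m-1$, uniformly in $N$ and $n_0$. Together these give
\begin{align*}
  \abs{\sum_{n=n_0+1}^{n_0+N}\e\rb{\frac{an^2+bn}{m}}}
    \le \sqrt{2m\gcd(a,m)}\,\rb{\frac Nm + \frac1m\sum_{h=1}^{m-1}\frac1{\abs{\sin(\pi h/m)}}},
\end{align*}
so it remains to establish the elementary inequality $\frac1m\sum_{h=1}^{m-1}\abs{\sin(\pi h/m)}^{-1}\le 1+\frac2\pi\log\frac{2m}\pi$. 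For this I would treat $h=1$ and $h=m-1$ separately, bounding each of the two by $m/2$ via $\sin(\pi/m)\ge 2/m$; for $2\le h\le m-2$ I would pair $h$ with $m-h$ and use the monotonicity of $x\mapsto(\sin\pi x)^{-1}$ on $(0,\tfrac12]$ to compare the sum against the integral
\begin{align*}
  2m\int_{1/m}^{1/2}\frac{dx}{\sin\pi x} = \frac{2m}\pi\log\cot\frac\pi{2m}\le\frac{2m}\pi\log\frac{2m}\pi,
\end{align*}
the last step using $\cot y\le 1/y$. Dividing by $m$ then yields the claim, up to verifying the smallest values of $m$ (in particular $m=1$, where the sum is empty) directly.

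The only genuine difficulty is cosmetic: obtaining precisely the constant $1+\frac2\pi\log\frac{2m}\pi$ rather than a cruder $O(\log m)$ bound forces the integral comparison above and some care with the parity of $m$ and with small $m$. Everything else is a routine application of Theorem~\ref{theorem:complete-gauss-sum} together with the geometric-series estimate \eqref{eq:estimate-geometric-series}.
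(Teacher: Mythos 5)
Your proof is correct: the completion of the incomplete sum via discrete Fourier inversion on $\Z/m\Z$, the uniform bound $\abs{\G(a,b-h;m)}\le\sqrt{2m\gcd(a,m)}$ from Theorem~\ref{theorem:complete-gauss-sum}, the geometric-series estimate \eqref{eq:estimate-geometric-series}, and the elementary inequality $\sum_{h=1}^{m-1}\abs{\sin(\pi h/m)}^{-1}\le m+\tfrac{2m}{\pi}\log\tfrac{2m}{\pi}$ (with the small-$m$ and parity checks you flag) fit together exactly as you describe. The paper itself gives no proof here but only cites Lemma~9 of Drmota--Mauduit--Rivat, whose proof is precisely this standard completion argument, so your route coincides with the source's.
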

\begin{proof}
  This is Lemma 9 of \cite{drmotaMauduitRivat2014}.
\end{proof}

\subsection{Carry Lemmas}\label{sec:carry_2}

As mentioned before, we want to find a quantitative statement on how rare carry propagation along several digits is.

\begin{lemma}\label{Lecarry0}
Let $(\nu,\lambda,\rho)\in\N^3$ such that
$\nu+\rho \leq \lambda \leq 2\nu$.
For any integer $r$ with $0\leq r\leq q^{\rho}$, the
number of integers $n < q^{\nu}$  for which there exists an integer
$j\ge \lambda$ 
with $\varepsilon_j((n+r)^2) \ne \varepsilon_j(n^2)$ is
$\ll q^{2\nu+\rho-\lambda}$.
Hence, we find for any block-additive function $b$, that the number of integers $n < q^\nu$  with
\begin{align*}
b_{\lambda-m+1}((n+r)^2) - b_{\lambda-m+1}(n^2) \ne b((n+r)^2) - b(n^2)
\end{align*}
is also $\ll q^{2\nu+\rho-\lambda}$.
\end{lemma}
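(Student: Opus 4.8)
The plan is to follow the standard carry-propagation argument (the base-$2$ case is Lemma~9 of \cite{drmotaMauduitRivat2014}) adapted to general $q$. Note first that $\nu+\rho\le\lambda\le 2\nu$ forces $\rho\le\nu$, so for $0\le r\le q^\rho$ and $n<q^\nu$ we have
\[
  0\le (n+r)^2-n^2=2rn+r^2<2q^{\nu+\rho}+q^{2\rho}<q^{a},\qquad a:=\nu+\rho+2 .
\]
Thus $(n+r)^2=n^2+t$ with $0\le t<q^{a}$. Writing $n^2=Pq^{a}+Q$ with $0\le Q<q^{a}$, we get $Q+t<2q^{a}$, hence $(n+r)^2=(P+c)q^{a}+Q'$ with $c\in\{0,1\}$ and $0\le Q'<q^{a}$. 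Consequently the digits of $(n+r)^2$ and of $n^2$ at positions $\ge a$ are exactly the digits of $P+c$ and of $P$, and these coincide at and above the position of the lowest digit of $P$ that is not equal to $q-1$.

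From this I would characterise the ``bad'' $n$: if there is some $j\ge\lambda$ with $\varepsilon_j((n+r)^2)\ne\varepsilon_j(n^2)$, then either $\lambda\in\{\nu+\rho,\nu+\rho+1\}$ --- in which case $q^{2\nu+\rho-\lambda}\ge q^{\nu-1}$ and the assertion is trivial since there are only $q^\nu$ values of $n$ --- or $\lambda\ge a$, and then necessarily $c=1$ and $\varepsilon_i(n^2)=q-1$ for all $a\le i<\lambda$. The latter condition is equivalent to
\[
  n^2\bmod q^{\lambda}\in\bigl[q^{\lambda}-q^{a},\,q^{\lambda}\bigr),\qquad\text{i.e.}\qquad n^2\in\bigl[mq^{\lambda}-q^{a},\,mq^{\lambda}\bigr)\ \text{ for some } 1\le m\le M,
\]
where $M\ll q^{2\nu-\lambda}$ because $n^2<q^{2\nu}$ (and $a\le\lambda$).

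It then remains to count, for fixed $m$, the number of $n$ with $n^2$ in an interval of length $q^a$; this is $\ll 1+\min\bigl(q^{a/2},\,q^{a}/\sqrt{\max(A,1)}\bigr)$, where $A$ denotes the left endpoint. Using the second bound for $m\ge2$ (where $mq^{\lambda}-q^{a}\gg mq^{\lambda}$) and for $m=1$ as well, together with $\sum_{1\le m\le M}m^{-1/2}\ll\sqrt M\ll q^{\nu-\lambda/2}$, the total number of bad $n$ is
\[
  \ll M+q^{a/2}+q^{a}q^{-\lambda/2}\sqrt M\ll q^{2\nu-\lambda}+q^{a/2}+q^{a+\nu-\lambda}\ll q^{2\nu+\rho-\lambda},
\]
since $a=\nu+\rho+2$ and $\lambda\le2\nu$, all implied constants depending only on $q$. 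This proves the first assertion. For the second one I would apply the first assertion with $\lambda$ replaced by $\lambda-m+1$: since $F(0,\dots,0)=0$, the quantity $b(n)-b_{\lambda-m+1}(n)=\sum_{j\ge\lambda-m+1}F(\varepsilon_{j+m-1}(n),\dots,\varepsilon_j(n))$ depends only on the digits $\varepsilon_i(n)$ with $i\ge\lambda-m+1$, so $b_{\lambda-m+1}((n+r)^2)-b_{\lambda-m+1}(n^2)\ne b((n+r)^2)-b(n^2)$ forces $\varepsilon_i((n+r)^2)\ne\varepsilon_i(n^2)$ for some $i\ge\lambda-m+1$, and there are $\ll q^{2\nu+\rho-(\lambda-m+1)}=q^{m-1}q^{2\nu+\rho-\lambda}\ll q^{2\nu+\rho-\lambda}$ such $n$ ($m$ being fixed; the degenerate range $\lambda-m+1<\nu+\rho$ is again covered by the trivial bound $q^\nu$).

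The argument is essentially routine; the only step demanding care is the carry analysis --- pinning down exactly which digit positions of $P+c$ and of $P$ can differ, and translating ``all intermediate digits of $n^2$ equal $q-1$'' into the clean condition $n^2\bmod q^\lambda\in[q^\lambda-q^a,q^\lambda)$ --- together with verifying that the degenerate ranges of $\lambda$ (where $a\ge\lambda$, or where $\lambda-m+1<\nu+\rho$) are absorbed by the trivial bound with a $q$-dependent implied constant.
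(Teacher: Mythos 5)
Your overall strategy is the standard carry-propagation argument and matches the proof of the corresponding lemma in Drmota--Mauduit--Rivat that the paper simply cites (the paper gives no proof of its own here). The reduction to ``$c=1$ and $\varepsilon_i(n^2)=q-1$ for all $a\le i<\lambda$'', its reformulation as $n^2\bmod q^{\lambda}\in[q^{\lambda}-q^{a},q^{\lambda})$, the counting of $n$ with $n^2$ in short intervals, and the deduction of the second assertion from the first (with $\lambda$ replaced by $\lambda-m+1$ and the degenerate range absorbed trivially) are all correct.

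There is, however, one incorrect step in your final estimate. You retain a term $q^{a/2}$ (from the $m=1$ interval) and assert $q^{a/2}\ll q^{2\nu+\rho-\lambda}$ ``since $a=\nu+\rho+2$ and $\lambda\le 2\nu$''. That inequality amounts to $\lambda\le(3\nu+\rho)/2+O(1)$ and fails in general: for $\rho=0$ and $\lambda=2\nu$ the right-hand side is $O(1)$ while $q^{a/2}=q^{\nu/2+1}$. The term is also unnecessary: once $\lambda\ge a+1$, the $m=1$ interval $[q^{\lambda}-q^{a},q^{\lambda})$ has left endpoint at least $q^{\lambda}(1-1/q)\gg q^{\lambda}$, so your second bound gives $\ll q^{a-\lambda/2}$ for $m=1$ exactly as for $m\ge 2$, and the whole sum is $\ll M+q^{a-\lambda/2}\sqrt{M}\ll q^{2\nu-\lambda}+q^{a+\nu-\lambda}\ll q^{2\nu+\rho-\lambda}$ with no $q^{a/2}$ contribution. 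The only case this leaves out is $\lambda=a$, which should be folded into your ``trivial'' case, i.e., dispose of all $\lambda\le\nu+\rho+2$ by the bound $q^{\nu}\le q^{2}\,q^{2\nu+\rho-\lambda}$ rather than only $\lambda\in\{\nu+\rho,\nu+\rho+1\}$. With that one-line repair the proof is complete.
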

\begin{proof}
  A proof for the Thue-Morse sequence can be found in \cite{drmotaMauduitRivat2014} and it is easy to adapt it for this more general case.
\end{proof}

The next lemma helps to replace quadratic exponential sums depending only on few digits.

\begin{lemma}\label{Lecarry1}
Let $(\lambda,\mu,\nu,\rho')\in\N^4$ 
such that $0 < \mu < \nu < \lambda$, $2\rho' \le \mu \le \nu - \rho'$ and $\lambda-\nu \le 2(\mu-\rho')$and set 
$\mu' = \mu - \rho'$.
For integers $n<q^\nu$, $s\ge 1$ 
and $1\le r\le q^{(\lambda-\nu)/2}$ we set
\begin{align}
  n^2 &\equiv u_1 q^{\mu'} + w_1 (\bmod q^{\lambda+m-1}) & (0\le w_1 < q^{\mu'},\ 0\le u_1 <  q^{\lambda +m-1- \mu + \rho'})  \nonumber \\
  (n+r)^2 &\equiv u_2 q^{\mu'} + w_2 (\bmod q^{\lambda+m-1}) \label{equ1u2u3} 
    & (0\le w_2 < q^{\mu'},\ 0\le u_2<  q^{\lambda +m-1- \mu + \rho'})\\
  2n &\equiv u_3 q^{\mu'} + w_3 (\bmod q^{\lambda+m-1}) &  (0\le w_3 < q^{\mu'},\ 0\le u_3 < q^{\nu+1 - \mu + \rho'})  \nonumber \\
  2s q^{m-1} n &\equiv v (\bmod q^{\lambda-\mu+m-1}),  & (0\le v < q^{\lambda-\mu+m-1}) \nonumber 
\end{align}
where the integers $u_1=u_1(n)$, $u_2=u_2(n)$, $u_3=u_3(n)$, 
$v=v(n)$, $w_1=w_1(n)$, $w_2=w_2(n)$ and $w_3=w_3(n)$ satisfy
the above conditions. 
Then for any integer $\ell\ge 1$ the number of integers $n < q^\nu$ for which
one of the following conditions 
\begin{align}
  b_{\mu,\lambda}((n+\ell)^2) &\ne b_{\rho',\lambda-\mu+\rho'}(u_1+\ell u_3) \nonumber \\
  b_{\mu,\lambda}((n+\ell+sq^{\mu+m-1})^2))
  &\ne b_{\rho',\lambda-\mu+\rho'}(u_1+\ell u_3+ v q^{\rho'} + 2 \ell s q^{m-1} q^{\rho'}) \label{equ1u2u3-2} \\
  b_{\mu,\lambda}((n+r+\ell)^2) &\ne b_{\rho',\lambda-\mu+\rho'}(u_2+\ell u_3)\nonumber\\
  b_{\mu,\lambda}((n+r+\ell+sq^{\mu+m-1})^2))
  &\ne b_{\rho',\lambda-\mu+\rho'}(u_2+\ell u_3+ v q^{\rho'} + 2 (\ell+r) s q^{m-1} q^{\rho'}) \nonumber
\end{align}
is satisfied is $\ll q^{\nu-\rho'}$.
\end{lemma}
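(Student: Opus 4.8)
The plan is to treat the four displayed inequalities separately and to bound, for each of them, the number of $n < q^\nu$ for which it holds; a union bound over the four then finishes. I describe the argument for the first, $b_{\mu,\lambda}((n+\ell)^2) \ne b_{\rho',\lambda-\mu+\rho'}(u_1+\ell u_3)$, the other three being entirely parallel. The basic tool is the exact shift identity for the two-fold restricted function, which is immediate from the definitions together with Lemma~\ref{le:rec_b}: writing $\mu' = \mu-\rho'$, one has for every $N\in\N$
\begin{align*}
  b_{\mu,\lambda}(N) = b_{\rho',\lambda-\mu+\rho'}\rb{\floor{N/q^{\mu'}}},
\end{align*}
and, since $b_{\rho',\lambda-\mu+\rho'}$ is $q^{\lambda+m-1-\mu'}$-periodic (being a difference of periodic truncations), $N$ may be replaced by $N \bmod q^{\lambda+m-1}$ on the right. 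Hence the inequality can hold only when $\floor{((n+\ell)^2 \bmod q^{\lambda+m-1})/q^{\mu'}}$ and $u_1+\ell u_3$ differ inside the digit window $[\rho',\lambda-\mu+\rho'+m-2]$ that $b_{\rho',\lambda-\mu+\rho'}$ actually sees.

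To make this precise I would expand $(n+\ell)^2 = n^2 + 2\ell n + \ell^2$ and insert the congruences defining $u_1,w_1$ and $u_3,w_3$: modulo $q^{\lambda+m-1}$ this gives $(n+\ell)^2 \equiv (u_1+\ell u_3)q^{\mu'} + (w_1 + \ell w_3 + \ell^2)$, and $w_1 + \ell w_3 + \ell^2 < (k+1)q^{\mu'}$ once $q^{\mu'}$ exceeds the bounded quantity $\ell^2$, so $(n+\ell)^2 \equiv (u_1+\ell u_3 + c_\ell)q^{\mu'} + (\text{something} < q^{\mu'}) \pmod{q^{\lambda+m-1}}$ with an integer $0 \le c_\ell \le k$. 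Therefore the left-hand $b$-value equals $b_{\rho',\lambda-\mu+\rho'}(u_1+\ell u_3 + c_\ell)$, which differs from $b_{\rho',\lambda-\mu+\rho'}(u_1+\ell u_3)$ only if adding the small number $c_\ell$ at the units position of $u_1+\ell u_3$ propagates a carry up to position $\rho'$; since $c_\ell \le k < q^{\rho'}$, this forces $(u_1+\ell u_3) \bmod q^{\rho'} \ge q^{\rho'}-k-1$. For the conditions involving the shift $sq^{\mu+m-1}$ one expands $(n+\ell+sq^{\mu+m-1})^2$ likewise; reduced modulo $q^{\lambda+m-1}$ and divided by $q^{\mu'}$, the linear cross term $2sq^{\mu+m-1}(n+\ell)$ produces exactly the term $vq^{\rho'} + 2\ell sq^{m-1}q^{\rho'}$ of the lemma (using $v\equiv 2sq^{m-1}n$), the high-order term $s^2q^{2(\mu+m-1)}$ does not reach the relevant window, and for the conditions involving $r$ the hypothesis $\lambda-\nu \le 2(\mu-\rho')$ is precisely what keeps $r\le q^{(\lambda-\nu)/2}\le q^{\mu'}$ small enough for the same bookkeeping to go through — this step is identical to the corresponding one in \cite{drmotaMauduitRivat2014}. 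In all four cases one lands on the same criterion: the inequality can hold only if a fixed ($\ell$- and $r$-dependent) linear combination of $\floor{n^2/q^{\mu'}}$ and $\floor{2n/q^{\mu'}}$ (or of $\floor{(n+r)^2/q^{\mu'}}$) lies modulo $q^{\rho'}$ in a set of at most $O_k(1)$ residues.

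It remains to count these exceptional $n$, and this is the real point. Using $\floor{n^2/q^{\mu'}} + \ell\floor{2n/q^{\mu'}} = \floor{(n+\ell)^2/q^{\mu'}} + O_k(1)$, the exceptional set for the first condition sits inside $\{n<q^\nu : (n+\ell)^2 \bmod q^\mu \in \widetilde B\}$, where $\widetilde B = \bigcup_{a}[aq^{\mu'},(a+1)q^{\mu'})$ is a union of $O_k(1)$ intervals, so $\#\widetilde B \ll_k q^{\mu'}$. Since $\mu < \nu$, translating $n \mapsto n+\ell$ and decomposing $[0,q^\nu)$ into $\ll q^{\nu-\mu}$ residue classes modulo $q^\mu$ reduces the count to $q^{\nu-\mu}\sum_{c\in\widetilde B}\#\{n<q^\mu : n^2 \equiv c \pmod{q^\mu}\}$. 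Here I would invoke the elementary bound $\#\{n<q^\mu : n^2 \equiv c \pmod{q^\mu}\} \ll_q q^{v_q(c)/2}$ (Chinese Remainder Theorem and Hensel lifting, reading $v_q(0)$ as $\mu$). On each interval $[aq^{\mu'},(a+1)q^{\mu'})$ the non-boundary terms contribute $\sum_{0<t<q^{\mu'}}q^{v_q(t)/2} \le q^{\mu'}\sum_{e\ge0}q^{-e/2} \ll_q q^{\mu'}$ by summing a geometric series, while the single boundary term $t=0$ contributes $\ll_q q^{\mu/2} \le q^{\mu'}$ by the hypothesis $2\rho' \le \mu$; summing over the $O_k(1)$ intervals gives $\sum_{c\in\widetilde B}\#\{\dots\} \ll_{q,k} q^{\mu'}$. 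Multiplying back, the exceptional set for each of the four conditions has size $\ll_{q,k} q^{\nu-\mu}q^{\mu'} = q^{\nu-\rho'}$, which is the claim. The genuine difficulty is this uniform carry count together with the square-root estimate modulo $q^\mu$; the matching of the cross and high-order terms in the shifted conditions is routine bookkeeping carried over from \cite{drmotaMauduitRivat2014}.
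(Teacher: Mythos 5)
The paper offers no proof of this lemma at all (it defers entirely to \cite{drmotaMauduitRivat2014} with the remark that the adaptation is straightforward), so your argument has to be judged on its own and against the argument it points to. Your reduction step is correct and is essentially the DMR reduction: the identity $b_{\mu,\lambda}(N)=b_{\rho',\lambda-\mu+\rho'}(\floor{N/q^{\mu'}})$, the expansion of $(n+\ell)^2$ and of the shifted squares via the defining congruences, and the conclusion that a discrepancy forces a carry past position $\rho'$, i.e. $(u_1+\ell u_3)\bmod q^{\rho'}\ge q^{\rho'}-O_\ell(1)$, so that the exceptional set is contained in $\{n:(n+\ell)^2\bmod q^\mu\in\widetilde B\}$ with $\widetilde B$ a union of $O_\ell(1)$ intervals of length $q^{\mu'}$. (One caveat: the assertion that $s^2q^{2(\mu+m-1)}$ ``does not reach the relevant window'' needs $\lambda\le 2\mu+m-1$, which holds for the parameter choices made in Section 4 but is not literally implied by the hypotheses as stated; this imprecision is inherited from the source and I will not press it.)

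The counting step is where you genuinely depart from \cite{drmotaMauduitRivat2014} --- which detects $n^2\bmod q^\mu\in\widetilde B$ with Vaaler polynomials and incomplete Gauss sums --- and it is also where there is a concrete error. The pointwise bound $\#\{n<q^\mu:n^2\equiv c\ (\bmod\ q^\mu)\}\ll_q q^{v_q(c)/2}$ is \emph{false for composite} $q$, and composite $q$ is exactly the generality this paper requires. By CRT the correct bound is $\ll_q\prod_{p\mid q}p^{\min(v_p(c),\,\mu v_p(q))/2}=\sqrt{\gcd(c,q^\mu)}$; for $q=6$ and $c=2^\mu$ (with $\mu$ even) one has $v_q(c)=0$, yet $c$ has $\asymp 2^{\mu/2}$ square roots modulo $6^\mu$, so your geometric-series computation $\sum_{0<t<q^{\mu'}}q^{v_q(t)/2}\ll q^{\mu'}$ does not bound the quantity you need. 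The repair is painless and keeps your argument elementary: with $r(c)\ll_q\sqrt{\gcd(c,q^\mu)}$ and the divisor decomposition
\begin{align*}
\sum_{c\in I}\sqrt{\gcd(c,q^\mu)}\le\sum_{d\mid q^\mu}\sqrt{d}\,\rb{\frac{\abs{I}}{d}+1}\ll_q \abs{I}+q^{\mu/2},
\end{align*}
(the last step because $\sum_{d\mid q^\mu}d^{-1/2}\ll_q 1$ and $\sum_{d\mid q^\mu}\sqrt d\ll_q q^{\mu/2}$, both being products of geometric series over the primes dividing $q$), one gets $\ll_q q^{\mu'}$ per interval by the hypothesis $2\rho'\le\mu$, and hence the claimed $\ll q^{\nu-\rho'}$ after multiplying by $q^{\nu-\mu}$. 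This also sidesteps your separate handling of the boundary term and the fact that $\gcd(aq^{\mu'}+t,q^\mu)\ne\gcd(t,q^\mu)$ in general. With this correction your route is valid and arguably cleaner than the exponential-sum count: it gives the bound with no logarithmic loss and without invoking the Gauss-sum machinery of Section \ref{sec:gauss}.
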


\begin{proof}
A proof for the sum of digits function in base $2$ can be found in \cite{drmotaMauduitRivat2014} and it is straight forward to adapt it to fit this more general case.
\end{proof}

\subsection{Van-der-Corput's inequality}\label{sec:vandercorput}

The following lemma is a generalization of Van-der-Corput's inequality.
\begin{lemma}[\cite{mauduitRivat2009}]\label{lemma:van-der-corput}
  For all complex numbers $z_1,\ldots,z_N$ 
  and all integers $Q\geq 1$ and  $R\geq 1$, we have
  \begin{align}\label{eq:van-der-corput}
    \abs{\sum_{n=1}^{N-1} z_n}^2 \le \frac{N+QR-Q}{R} \rb{\sum_{n=1}^{N-1} |z_n|^2 + 2\ \sum_{r=1}^{R-1}\rb{1-\frac{r}{R}} \ \sum_{n=1}^{N-Qr-1}\Re\rb{z_{n+Qr} \overline{z_n}}}
  \end{align}
  where $\Re(z)$ denotes the real part of $z\in\C$.
\end{lemma}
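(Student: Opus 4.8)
The plan is to prove this by the classical "completing and squaring" argument. First I would adopt the convention $z_n = 0$ for $n \le 0$ and $n \ge N$, so that all sums below may be taken over $n \in \Z$ without worrying about endpoints. Set $S = \sum_{n=1}^{N-1} z_n$ and, for each $n \in \Z$, define the windowed sum $w_n = \sum_{r=0}^{R-1} z_{n+Qr}$. Swapping the order of summation and reindexing shows $\sum_{n\in\Z} w_n = \sum_{r=0}^{R-1} \sum_{n\in\Z} z_{n+Qr} = \sum_{r=0}^{R-1} S = RS$. The term $w_n$ can be nonzero only if $n + Qr \in [1, N-1]$ for some $0 \le r \le R-1$, i.e.\ only for $n$ in an interval containing at most $N - 1 + Q(R-1) \le N + QR - Q$ integers.

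Next I would apply the Cauchy--Schwarz inequality to $RS = \sum_n w_n$ over this index set, giving
\begin{align*}
  R^2 \abs{S}^2 = \abs{\sum_n w_n}^2 \le (N+QR-Q) \sum_n \abs{w_n}^2 .
\end{align*}
Then I would expand $\sum_n \abs{w_n}^2 = \sum_n \sum_{r_1,r_2=0}^{R-1} z_{n+Qr_1}\overline{z_{n+Qr_2}}$ and split it into the diagonal part $r_1 = r_2$ and the off-diagonal part. The diagonal contributes $\sum_{r=0}^{R-1}\sum_n \abs{z_{n+Qr}}^2 = R \sum_{n=1}^{N-1}\abs{z_n}^2$. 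For the off-diagonal part, I would pair $(r_1,r_2)$ with $(r_2,r_1)$ to form $2\Re$, set $r = r_1 - r_2 \in \{1,\ldots,R-1\}$, and note that after the shift $n \mapsto n - Qr_2$ the inner sum $\sum_n z_{n+Qr}\overline{z_n}$ no longer depends on $r_2$; since there are exactly $R - r$ admissible values of $r_2$ for each $r$, the off-diagonal part equals $2R\,\Re \sum_{r=1}^{R-1}\rb{1 - \tfrac rR} \sum_n z_{n+Qr}\overline{z_n}$. Restoring the endpoint restrictions turns $\sum_n$ here into $\sum_{n=1}^{N-Qr-1}$.

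Combining these and dividing by $R^2$ yields exactly \eqref{eq:van-der-corput}. The only delicate points are the bookkeeping: justifying the bound $N + QR - Q$ on the size of the outer index range, and correctly tracking the multiplicity $R - r$ that produces the weight $1 - r/R$. Both are routine once the zero-padding convention is in place, so I do not expect a genuine obstacle; the argument is standard and I would simply cite the reference~\cite{mauduitRivat2009} after sketching it as above.
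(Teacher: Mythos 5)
Your argument is correct and is exactly the standard ``window and Cauchy--Schwarz'' proof of the generalized van der Corput inequality from the cited reference~\cite{mauduitRivat2009}; the paper itself gives no proof and simply cites that source. The bookkeeping checks out: the support of $w_n$ has at most $N-1+Q(R-1)\le N+QR-Q$ elements, the multiplicity $R-r$ of each difference $r=r_1-r_2$ yields the weight $1-r/R$ after dividing by $R^2$, and the zero-padding correctly truncates the inner sum to $1\le n\le N-Qr-1$.
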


\subsection{Vaaler's method}\label{sec:vaaler}

The following theorem is a classical method to detect real numbers in an
interval modulo $1$ by means of exponential sums developed by Vaaler~\cite{vaaler}. 
For $\alpha\in\R$ with $0\leq \alpha<1$, we denote by $\chi_\alpha$ the
characteristic function of the interval $[0,\alpha)$ modulo $1$:
\begin{align} \label{eq:definition-chi}
  \chi_\alpha(x)=\floor{x}-\floor{x-\alpha}.
\end{align}

The following theorem is a consequence of the mentioned paper by Vaaler. 
The presented form was first published by Mauduit and Rivat~\cite{mauduit_rivat_rs}.

\begin{theorem}\label{th:vaaler}
For all $\alpha\in\R$ with $0\leq \alpha<1$ 
and all integer $H\geq 1$, 
there exist real-valued
trigonometric polynomials $A_{\alpha,H}(x)$ and $B_{\alpha,H}(x)$ 
such that for all $x\in\R$
\begin{align}\label{eq:vaaler-approximation}
  \abs{ \chi_\alpha(x) - A_{\alpha,H}(x) } \leq B_{\alpha,H}(x).
\end{align}
The trigonometric polynomials are defined by
\begin{align}\label{eq:definition-A-B}
  A_{\alpha,H}(x) = \sum_{\abs{h}\leq H}  a_h(\alpha,H) \e(h x),\
  B_{\alpha,H}(x) = \sum_{\abs{h}\leq H} b_h(\alpha,H) \e(h x),
\end{align}
with coefficients $a_h(\alpha,H)$ and $b_h(\alpha,H)$ satisfying
\begin{align}\label{eq:vaaler-coef-majoration}
  a_0(\alpha,H) = \alpha,\ 
  \abs{a_h(\alpha,H)} \leq \min\rb{\alpha,\tfrac{1}{\pi\abs{h}}},\
  \abs{b_h(\alpha,H)} \leq \tfrac{1}{H+1}.
\end{align}
\end{theorem}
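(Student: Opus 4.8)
The plan is to derive Theorem~\ref{th:vaaler} from Vaaler's approximation of the periodic sawtooth function, applied twice. First I would reduce the statement about $\chi_\alpha$ to one about the sawtooth: set $\psi(x) = \{x\} - \tfrac12$ for $x \notin \Z$ and $\psi(x) = 0$ for $x \in \Z$. Since $\floor{x} = x - \tfrac12 - \psi(x)$ off the integers, a short computation (the jump points being absorbed by the normalization of $\psi$) gives the identity
\[
  \chi_\alpha(x) = \floor{x} - \floor{x-\alpha} = \alpha - \psi(x) + \psi(x-\alpha) \qquad (x \in \R).
\]
Hence it suffices to approximate $\psi$ itself by a trigonometric polynomial of degree $H$ with an explicit nonnegative majorant for the error and with well-controlled Fourier coefficients.

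The essential input I would quote is Vaaler's construction \cite{vaaler} (in the normalization recorded in \cite{mauduit_rivat_rs}): for each integer $H \ge 1$ there is a real-valued trigonometric polynomial $\psi_H^*(x) = \sum_{1 \le \abs{h} \le H} c_h\, \e(hx)$, with zero constant term, such that $\abs{c_h} \le \tfrac{1}{2\pi\abs{h}}$ for all $1 \le \abs{h} \le H$ and
\[
  \abs{\psi(x) - \psi_H^*(x)} \le \frac{1}{2(H+1)}\, F_H(x), \qquad F_H(x) = \sum_{\abs{h}\le H}\rb{1 - \frac{\abs{h}}{H+1}} \e(hx) \ge 0,
\]
with $F_H$ the Fejér kernel. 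This is the one genuinely non-trivial step, and it is the main obstacle: it rests on the Beurling--Selberg extremal majorants and minorants of $\sgn$ (entire functions of exponential type $2\pi$ with minimal $L^1$-defect) combined with a periodization/sampling argument. I would simply cite it rather than reprove it; everything after this point is bookkeeping.

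Then I would take
\[
  A_{\alpha,H}(x) = \alpha - \psi_H^*(x) + \psi_H^*(x-\alpha), \qquad B_{\alpha,H}(x) = \frac{1}{2(H+1)}\rb{F_H(x) + F_H(x-\alpha)},
\]
which are real-valued trigonometric polynomials of degree $\le H$, and check the assertions. The bound \eqref{eq:vaaler-approximation} is immediate from the identity for $\chi_\alpha$ and the triangle inequality:
\[
  \abs{\chi_\alpha(x) - A_{\alpha,H}(x)} = \abs{\rb{\psi(x-\alpha) - \psi_H^*(x-\alpha)} - \rb{\psi(x) - \psi_H^*(x)}} \le \frac{F_H(x-\alpha)+F_H(x)}{2(H+1)} = B_{\alpha,H}(x).
\]
For \eqref{eq:definition-A-B}--\eqref{eq:vaaler-coef-majoration} I would read the coefficients off directly: the constant term of $A_{\alpha,H}$ is $a_0 = \alpha$, while for $h\neq 0$ one has $a_h = c_h\rb{\e(-h\alpha)-1}$, so $\abs{a_h} \le \abs{c_h}\cdot 2\abs{\sin(\pi h\alpha)} \le \tfrac{1}{2\pi\abs{h}}\min\rb{2,\ 2\pi\abs{h}\alpha} = \min\rb{\tfrac{1}{\pi\abs{h}},\ \alpha}$; and for $B_{\alpha,H}$ one has $b_0 = \tfrac{1}{H+1}$ and, more generally, $b_h = \tfrac{1}{2(H+1)}\rb{1-\tfrac{\abs{h}}{H+1}}\rb{1+\e(-h\alpha)}$, so $\abs{b_h} \le \tfrac{2}{2(H+1)} = \tfrac{1}{H+1}$. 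This completes the plan, the only substantial ingredient being the invocation of Vaaler's sawtooth approximation.
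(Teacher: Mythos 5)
The paper offers no proof of this theorem: it is quoted verbatim from Mauduit--Rivat \cite{mauduit_rivat_rs} as a consequence of Vaaler's work \cite{vaaler}, so there is nothing internal to compare against. Your derivation is the standard one underlying that citation -- reduce $\chi_\alpha$ to the sawtooth via $\chi_\alpha(x)=\alpha-\psi(x)+\psi(x-\alpha)$, invoke Vaaler's trigonometric approximation of $\psi$ with the Fej\'er-kernel majorant of the error, and read off the coefficient bounds -- and all the computations you perform (the bound $\abs{a_h}\le \abs{c_h}\cdot 2\abs{\sin(\pi h\alpha)}\le\min(\alpha,\tfrac1{\pi\abs h})$, the bound $\abs{b_h}\le\tfrac1{H+1}$, the nonnegativity of $B_{\alpha,H}$) are correct. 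The one place where you wave your hands is the jump set: with your midpoint normalization $\psi(n)=0$ the identity $\chi_\alpha(x)=\alpha-\psi(x)+\psi(x-\alpha)$ fails by $\pm\tfrac12$ when $x$ or $x-\alpha$ is an integer, and then the triangle inequality as written does not yield \eqref{eq:vaaler-approximation} at those points. The clean fix is to take $\psi(x)=x-\floor{x}-\tfrac12$ everywhere (so $\psi(n)=-\tfrac12$), for which the identity is exact for all $x$; Vaaler's estimate $\abs{\psi-\psi_H^*}\le\tfrac1{2(H+1)}F_H$ still holds with this normalization, since the Beurling--Selberg majorant and minorant sandwich every value in $[-\tfrac12,\tfrac12]$ at the integers. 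With that one-line adjustment your argument is complete, and the only genuinely nontrivial ingredient is, as you say, the cited extremal-function construction.
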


Using this method we can detect points in a $d$-dimensional box (modulo $1$):
\begin{lemma}\label{lemma:better-koksma1}
  For $(\alpha_1,\ldots, \alpha_d) \in [0,1)^d$ 
  and $(H_1,\ldots,H_d)\in\N^d$ 
  with $H_1\geq 1$,\ldots, $H_d\geq 1$,
  we have for all $(x_1,\ldots,x_d)\in\R^d$
  \begin{align} \label{eq:better-koksma1}
    \abs{\prod_{j=1}^d \chi_{\alpha_j}(x_j) - \prod_{j=1}^d A_{\alpha_j,H_j}(x_j)}
      \leq \sum_{\emptyset \neq J \subseteq \{1,\ldots,d\}} \prod_{j\not\in J} \chi_{\alpha_j}(x_j) \prod_{j\in J} B_{\alpha_j,H_j}(x_j)
  \end{align}
  where $A_{\alpha,H}(.)$ and $B_{\alpha,H}(.)$ are the real valued
  trigonometric polynomials defined by \eqref{eq:definition-A-B}.
\end{lemma}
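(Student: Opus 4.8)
The plan is to reduce the $d$-dimensional estimate to the one-dimensional bound \eqref{eq:vaaler-approximation} of Theorem~\ref{th:vaaler} by a straightforward product expansion. First I would write, for each $j\in\{1,\dots,d\}$,
\[
  A_{\alpha_j,H_j}(x_j) = \chi_{\alpha_j}(x_j) + \theta_j, \qquad \theta_j := A_{\alpha_j,H_j}(x_j) - \chi_{\alpha_j}(x_j),
\]
so that Theorem~\ref{th:vaaler} guarantees $|\theta_j| \le B_{\alpha_j,H_j}(x_j)$ for every $j$.

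Next I would expand the product by distributivity,
\[
  \prod_{j=1}^d A_{\alpha_j,H_j}(x_j) = \prod_{j=1}^d \bigl(\chi_{\alpha_j}(x_j) + \theta_j\bigr) = \sum_{J \subseteq \{1,\dots,d\}} \;\prod_{j\notin J} \chi_{\alpha_j}(x_j) \;\prod_{j\in J}\theta_j,
\]
and observe that the summand corresponding to $J = \emptyset$ is exactly $\prod_{j=1}^d\chi_{\alpha_j}(x_j)$. Subtracting it and applying the triangle inequality yields
\[
  \Bigl|\prod_{j=1}^d \chi_{\alpha_j}(x_j) - \prod_{j=1}^d A_{\alpha_j,H_j}(x_j)\Bigr| \le \sum_{\emptyset\neq J\subseteq\{1,\dots,d\}} \;\prod_{j\notin J} \bigl|\chi_{\alpha_j}(x_j)\bigr| \;\prod_{j\in J} |\theta_j|.
\]

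Finally I would note that $\chi_{\alpha_j}(x_j)\in\{0,1\}$ is nonnegative, directly from its definition \eqref{eq:definition-chi} as a characteristic function, so $|\chi_{\alpha_j}(x_j)| = \chi_{\alpha_j}(x_j)$; and that $B_{\alpha_j,H_j}(x_j) \ge |\theta_j| \ge 0$ as a function, since it dominates an absolute value for every argument. Hence replacing each $|\theta_j|$ by $B_{\alpha_j,H_j}(x_j)$ only enlarges the right-hand side, giving precisely \eqref{eq:better-koksma1}. There is essentially no obstacle here; the only point that requires a word of care is this nonnegativity of $\chi_{\alpha_j}$ and of $B_{\alpha_j,H_j}$, which is exactly what allows the absolute value to be pushed inside both products without loss.
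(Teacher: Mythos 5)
Your proof is correct and is precisely the standard argument: the paper itself gives no proof of this lemma but defers to Mauduit--Rivat, where the inequality is established by exactly this expansion of $\prod_j(\chi_{\alpha_j}+\theta_j)$, isolation of the $J=\emptyset$ term, and use of the pointwise bounds $|\theta_j|\leq B_{\alpha_j,H_j}(x_j)$ together with the nonnegativity of $\chi_{\alpha_j}$ and $B_{\alpha_j,H_j}$. Nothing is missing.
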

\begin{proof}
  See again \cite{mauduit_rivat_rs}.
\end{proof}

Let $(U_1,\ldots,U_d)\in\N^d$ with $U_1\geq 1$,\ldots,$U_d\geq 1$ and
define $\alpha_1=1/U_1$,\ldots,$\alpha_d=1/U_d$.
For $j=1,\ldots,d$ and $x\in\R$ we have
\begin{align}\label{eq:chi-partition}
  \sum_{0\leq u_j < U_j} \chi_{\alpha_j}\rb{x-\frac{u_j}{U_j}} = 1.
\end{align}
Let $N\in\N$ with $N\ge 1$, $f:\{1,\ldots,N\} \to \R^d$
and $g:\{1,\ldots,N\} \to \C$ such that $\abs{g}\leq 1$.
If $f=(f_1,\ldots,f_d)$,
we can express the sum
\begin{align*}
  S= \sum_{n=1}^N g(n) 
\end{align*}
as
\begin{align*}
  S = \sum_{n=1}^N g(n) \sum_{0\leq u_1 < U_1} \chi_{\alpha_1}\rb{f_1(n)-\frac{u_1}{U_1}} \cdots \sum_{0\leq u_d < U_d} \chi_{\alpha_d}\rb{f_d(n)-\frac{u_d}{U_d}}  .
\end{align*}
We now define $(H_1,\ldots,H_d)\in\N^d$ with $H_1\geq 1$,\ldots, $H_d\geq 1$,
\begin{align*}
  \widetilde{S} = \sum_{n=1}^N g(n) \sum_{0\leq u_1 < U_1} A_{\alpha_1,H_1}\rb{f_1(n)-\frac{u_1}{U_1}}  
      \cdots \sum_{0\leq u_d < U_d} A_{\alpha_d,H_d}\rb{f_d(n)-\frac{u_d}{U_d}}.
\end{align*}

\begin{lemma}
  With the notations from above, we have
  \begin{align}
  \label{eqS-S} \abs{S-\widetilde{S}} &\leq \sum_{\ell=1}^{d-1} \sum_{1\leq j_1<\cdots<j_\ell} \frac{U_{j_1}\cdots U_{j_\ell}}{H_{j_1}\cdots H_{j_\ell}}
      \sum_{\abs{h_{j_1}}\leq H_{j_1}/U_{j_1}} \cdots \sum_{\abs{h_{j_\ell}}\leq H_{j_\ell}/U_{j_\ell}}\\
    &\qquad \abs{\sum_{n=1}^N \e\rb{h_{j_1} U_{j_1} f_{j_1}(n) + \cdots + h_{j_\ell} U_{j_\ell} f_{j_\ell}(n)}}.
\end{align}
\end{lemma}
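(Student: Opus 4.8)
The plan is to invoke the pointwise approximation of Lemma~\ref{lemma:better-koksma1} and then use the partition identity \eqref{eq:chi-partition} to eliminate all but finitely many of the auxiliary summation variables, turning the remaining pieces into genuine exponential sums via a geometric series computation.

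First I would expand the products over $j$ in the definitions of $S$ and $\widetilde S$, so that
\begin{align*}
  S - \widetilde{S} = \sum_{n=1}^N g(n) \sum_{\substack{0 \le u_j < U_j \\ 1\le j \le d}} \rb{ \prod_{j=1}^d \chi_{\alpha_j}\rb{ f_j(n) - \frac{u_j}{U_j} } - \prod_{j=1}^d A_{\alpha_j,H_j}\rb{ f_j(n) - \frac{u_j}{U_j} } }.
\end{align*}
Since $\abs{g(n)} \le 1$, the triangle inequality combined with Lemma~\ref{lemma:better-koksma1} (applied for each fixed $n$ and $(u_1,\dots,u_d)$ with $x_j = f_j(n) - u_j/U_j$) gives
\begin{align*}
  \abs{S - \widetilde{S}} \le \sum_{\emptyset \ne J \subseteq \{1,\ldots,d\}} \sum_{n=1}^N \sum_{\substack{0 \le u_j < U_j \\ 1\le j \le d}} \prod_{j \notin J} \chi_{\alpha_j}\rb{ f_j(n) - \frac{u_j}{U_j} } \prod_{j \in J} B_{\alpha_j,H_j}\rb{ f_j(n) - \frac{u_j}{U_j} },
\end{align*}
where every summand is nonnegative because $B_{\alpha,H} \ge 0$ by \eqref{eq:vaaler-approximation}.

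Next, for a fixed nonempty $J$ I would factor the finite sum over $(u_1,\ldots,u_d)$. For an index $j \notin J$ the inner sum $\sum_{0 \le u_j < U_j} \chi_{\alpha_j}(f_j(n) - u_j/U_j)$ equals $1$ by \eqref{eq:chi-partition}, so those variables disappear. For an index $j \in J$ I would insert the Fourier expansion $B_{\alpha_j,H_j}(x) = \sum_{\abs{h} \le H_j} b_h(\alpha_j,H_j) \e(hx)$ from \eqref{eq:definition-A-B} and evaluate $\sum_{0 \le u_j < U_j} \e(-hu_j/U_j)$, which equals $U_j$ when $U_j \mid h$ and vanishes otherwise; writing $h = U_j h_j$ this yields
\begin{align*}
  \sum_{0 \le u_j < U_j} B_{\alpha_j,H_j}\rb{ f_j(n) - \frac{u_j}{U_j} } = U_j \sum_{\abs{h_j} \le H_j / U_j} b_{U_j h_j}(\alpha_j,H_j)\, \e\rb{ h_j U_j f_j(n) }.
\end{align*}
Substituting back, summing over $n$ and reordering the summations, the contribution of $J = \{ j_1 < \cdots < j_\ell \}$ becomes
\begin{align*}
  \rb{ \prod_{i=1}^{\ell} U_{j_i} } \sum_{\abs{h_{j_1}} \le H_{j_1}/U_{j_1}} \!\!\cdots\!\! \sum_{\abs{h_{j_\ell}} \le H_{j_\ell}/U_{j_\ell}} \rb{ \prod_{i=1}^{\ell} b_{U_{j_i} h_{j_i}}(\alpha_{j_i},H_{j_i}) } \sum_{n=1}^N \e\rb{ \sum_{i=1}^{\ell} h_{j_i} U_{j_i} f_{j_i}(n) }.
\end{align*}

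Finally I would apply the triangle inequality together with the Vaaler coefficient bound $\abs{b_h(\alpha,H)} \le (H+1)^{-1} \le H^{-1}$ from \eqref{eq:vaaler-coef-majoration}; this converts the prefactor $\prod_i U_{j_i}$ into $\prod_i U_{j_i}/\prod_i H_{j_i}$ and leaves exactly the complete exponential sums $\abs{\sum_n \e(\cdots)}$ occurring in \eqref{eqS-S}. Summing the resulting bounds over all nonempty $J$, organised according to $\ell = \abs{J}$, then produces the asserted inequality. I expect the only delicate point to be the bookkeeping in the middle step — keeping track of which $u$-variables are removed via \eqref{eq:chi-partition} and which are sifted into the geometric series, and checking that after these reductions the surviving $n$-sum is an honest exponential sum $\sum_n \e(\cdots)$ carrying no leftover weight; the analytic ingredients themselves (the geometric sum evaluation, the Fourier expansion of $B_{\alpha,H}$, and Vaaler's coefficient estimates) are entirely routine.
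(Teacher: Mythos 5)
Your proof is correct and is exactly the standard argument from Mauduit--Rivat that the paper simply cites for this lemma: expand the products, apply Lemma~\ref{lemma:better-koksma1} pointwise at $x_j=f_j(n)-u_j/U_j$, remove the variables $u_j$ with $j\notin J$ via \eqref{eq:chi-partition}, sift the remaining $u_j$ through the Fourier expansion \eqref{eq:definition-A-B} of $B_{\alpha_j,H_j}$ so that only the frequencies $h=U_jh_j$ survive, and finish with $\abs{b_h(\alpha,H)}\le (H+1)^{-1}$. The only discrepancy is that your derivation naturally ranges over all nonempty $J$, i.e.\ $\ell$ up to $d$, while the display \eqref{eqS-S} stops at $\ell=d-1$; the upper limit in the paper is a typo (the correct bound, as in the cited source, includes the term $J=\{1,\ldots,d\}$), so what you prove is the intended statement.
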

\begin{proof}
  See again \cite{mauduit_rivat_rs}.
\end{proof}

\bibliographystyle{abbrv}
 \bibliography{bibliography}

\begin{thebibliography}{10}

\bibitem{AlloucheShallit}
J.-P. {Allouche} and J.~{Shallit}.
\newblock {\em {Automatic sequences. Theory, applications, generalizations.}}
\newblock Cambridge: Cambridge University Press, 2003.

\bibitem{normal_polynomial}
V.~Becher, P.~A. Heiber, and T.~A. Slaman.
\newblock A polynomial-time algorithm for computing absolutely normal numbers.
\newblock {\em Inform. and Comput.}, 232:1--9, 2013.

\bibitem{bellmanShapiro}
R.~Bellman and H.~N. Shapiro.
\newblock On a problem in additive number theory.
\newblock {\em Ann. of Math. (2)}, 49:333--340, 1948.

\bibitem{normal}
Y.~Bugeaud.
\newblock {\em Distribution modulo one and {D}iophantine approximation}, volume
  193 of {\em Cambridge Tracts in Mathematics}.
\newblock Cambridge University Press, Cambridge, 2012.

\bibitem{Cateland}
E.~Cateland.
\newblock Suites digitales et suites k-r{\'e}guli{\`e}res.
\newblock PhD Thesis, Universit{\'e} Bordeaux I, 1992.

\bibitem{drmotaMauduitRivat2014}
M.~Drmota, C.~Mauduit, and J.~Rivat.
\newblock The {T}hue-{M}orse sequence along squares is normal.
\newblock \url{http://www.dmg.tuwien.ac.at/drmota/alongsquares.pdf}, 2013.
\newblock [Online; first accessed 26.2.2014].

\bibitem{seq_complex}
S.~Ferenczi.
\newblock Complexity of sequences and dynamical systems.
\newblock {\em Discrete Math.}, 206(1-3):145--154, 1999.
\newblock Combinatorics and number theory (Tiruchirappalli, 1996).

\bibitem{subst}
N.~P. Fogg.
\newblock {\em Substitutions in dynamics, arithmetics and combinatorics},
  volume 1794 of {\em Lecture Notes in Mathematics}.
\newblock Springer-Verlag, Berlin, 2002.
\newblock Edited by V. Berth{\'e}, S. Ferenczi, C. Mauduit and A. Siegel.

\bibitem{gelfond1968}
A.~O. Gelfond.
\newblock Sur les nombres qui ont des propri\'et\'es additives et
  multiplicatives donn\'ees.
\newblock {\em Acta Arith.}, 13:259--265, 1967/1968.

\bibitem{dynamics}
P.~Kurka.
\newblock {\em Topological and symbolic dynamics}, volume~11 of {\em Cours
  Sp\'ecialis\'es [Specialized Courses]}.
\newblock Soci\'et\'e Math\'ematique de France, Paris, 2003.

\bibitem{Levin_normal}
M.~B. Levin.
\newblock Absolutely normal numbers.
\newblock {\em Vestnik Moskov. Univ. Ser. I Mat. Mekh.}, (1):31--37, 87, 1979.

\bibitem{mauduitRivat2009}
C.~Mauduit and J.~Rivat.
\newblock La somme des chiffres des carr\'es.
\newblock {\em Acta Math.}, 203(1):107--148, 2009.

\bibitem{mauduit_rivat_rs}
C.~Mauduit and J.~Rivat.
\newblock Prime numbers along {R}udin-{S}hapiro sequences.
\newblock {\em J. Eur. Math. Soc. (JEMS)}, 17(10):2595--2642, 2015.

\bibitem{france}
M.~Mend\`es~France.
\newblock Nombres normaux. {A}pplications aux fonctions pseudo-al\'eatoires.
\newblock {\em J. Analyse Math.}, 20:1--56, 1967.

\bibitem{moshe}
Y.~Moshe.
\newblock On the subword complexity of {T}hue-{M}orse polynomial extractions.
\newblock {\em Theoret. Comput. Sci.}, 389(1-2):318--329, 2007.

\bibitem{queffelec}
M.~Queff\'elec.
\newblock {\em Substitution dynamical systems---spectral analysis}, volume 1294
  of {\em Lecture Notes in Mathematics}.
\newblock Springer-Verlag, Berlin, second edition, 2010.

\bibitem{scheerer_normal}
A.-M. Scheerer.
\newblock Computable absolutely normal numbers and discrepancies.
\newblock {\em Math. Comp.}, 86(308):2911--2926, 2017.

\bibitem{Schmidt_normal}
W.~M. Schmidt.
\newblock \"uber die {N}ormalit\"at von {Z}ahlen zu verschiedenen {B}asen.
\newblock {\em Acta Arith.}, 7:299--309, 1961/1962.

\bibitem{Sierpinski_normal}
W.~Sierpinski.
\newblock D\'emonstration \'el\'ementaire du th\'eor\`eme de {M}. {B}orel sur
  les nombres absolument normaux et d\'etermination effective d'une tel nombre.
\newblock {\em Bull. Soc. Math. France}, 45:125--132, 1917.

\bibitem{Turing_normal}
A.~Turing and P.~Saunders.
\newblock {\em Collected Works of A. M. Turing}.
\newblock Number Bd. 4 in Advances in Psychology. North-Holland, 1992.

\bibitem{vaaler}
J.~D. Vaaler.
\newblock Some extremal functions in {F}ourier analysis.
\newblock {\em Bull. Amer. Math. Soc. (N.S.)}, 12(2):183--216, 1985.

\end{thebibliography}

\end{document}